   \def\MR#1{}
\newtheorem{theorem}{Theorem}
\newtheorem{lemma}[theorem]{Lemma}
\newtheorem{corollary}[theorem]{Corollary}
\newtheorem{remark}[theorem]{Remark}
\newtheorem{conjecture}[theorem]{Conjecture}
\newtheorem{definition}[theorem]{Definition}
\def\SOP{\operatorname{SOP}}
\def\Th{\operatorname{Th}}
\def\Forb{\operatorname{Forb}}
\title{
Discrete metric spaces: 
structure, enumeration, and $0$-$1$ laws
}
\author{
Dhruv Mubayi \footnote{Research supported in part by NSF Grant DMS 1300138}\\
University of Illinois at Chicago\\
mubayi@uic.edu
\and
Caroline Terry\\
University of Illinois at Chicago\\
cterry3@uic.edu
}
\begin{document}

\maketitle

\begin{abstract}
Fix an integer $r\geq 3$.  We consider metric spaces on $n$ points such that the distance between any two points lies in $\{1,\ldots, r\}$.  Our main result describes their approximate structure for large $n$.  As a consequence, we show that the number of these metric spaces is 
$$
\Big\lceil \frac{r+1}{2} \Big\rceil ^{{n\choose 2} + o(n^2)}.
$$
Related results in the continuous setting have recently been proved by Kozma, Meyerovitch, Peled, and Samotij \cite{KMPS}.
When $r$ is even, our structural characterization is more precise, and implies that almost all such metric spaces have all distances at least $r/2$. 
 As an easy consequence, when $r$ is even
 we improve the error term above from $o(n^2)$ to $o(1)$, and also 
 show a labeled first-order $0$-$1$ law in the language $\mathcal{L}_r$, consisting of $r$ binary relations, one for each element of $[r]$.  In particular, we show the almost sure theory $T$ is the theory of the Fra\"{i}ss\'{e} limit of the class of all finite simple complete edge-colored graphs with edge colors in $\{r/2,\ldots, r\}$.   
 
Our work can be viewed as an extension of a long line of research in extremal combinatorics to the colored setting, as well as an addition to the collection of known structures that admit  logical $0$-$1$ laws. 
\end{abstract}

\section{Introduction}\label{intro}

\setcounter{theorem}{0}
\numberwithin{theorem}{section}

Given integers $n,r\geq 3$, define $M_r(n)$ to the the set of all metric spaces with underlying set $[n]:=\{1, \ldots, n\}$ and distances in $\{1,\ldots, r\}$.  The goal of this paper is to investigate the approximate structure of most elements of $M_r(n)$ for fixed $r$ and large $n$, and in the case when $r$ is even, to prove that $M_r(n)$ has a labeled first-order $0$-$1$ law.

\subsection{Background} 
A graph is a set equipped with a symmetric irreflexive binary relation.  Given $n\in \mathbb{N}$ and  a collection $\mathcal{H}$ of graphs, let $\Forb_n(\mathcal{H})$ denote the set of graphs with vertex set $[n]$ which do not contain any element of $\mathcal{H}$ as a subgraph.  There is a long line of research in extremal combinatorics which investigates the structural properties of graphs in $\Forb_n(\mathcal{H})$ for various $\mathcal{H}$.  One of the first such results is due to Erd\H{o}s, Kleitman, and Rothschild \cite{EKR}, which states that if $\mathcal{H}=\{K_3\}$, then almost all graphs in $\Forb_n(\mathcal{H})$ are bipartite.  More precisely, if $B(n)$ is the set of bipartite graphs on $[n]$, then 
$$
\lim_{n\rightarrow \infty} \frac{|\Forb_n(\{K_3\})|}{|B(n)|} =1.
$$
In \cite{KPR}, Kolaitis, Promel, and Rothschild extend this result to the case when $\mathcal{H}=\{K_l\}$ for integers $l\geq 3$, showing that almost all $K_{l}$-free graphs are $(l-1)$-partite.  These fundamental combinatorial results have been extended and generalized in numerous ways.  For instance, in the graph setting, \cite{BBS3, BS, LR, MS} contain similar results about $\Forb_n(\mathcal{H})$ for specific collections $\mathcal{H}$, and \cite{BBS, BBS2, EFR, HPS, PS} contain results which apply to $\Forb_n(\mathcal{H})$ for $\mathcal{H}$ satisfying general properties.   Results of this spirit for other types of structures include, for example, \cite{BG, BGP, KR} for partial orders, \cite{OKTZ, Robinson, St} for directed graphs, and \cite{BM, BM2, PersonSchacht} for hypergraphs.

In some cases, the structural information obtained about $\Forb_n(\mathcal{H})$ from such investigation is enough to prove a labeled first-order $0$-$1$ law, which we now define.  Suppose $\mathcal{L}$ is a finite first-order language and $F=\bigcup_{n\in \mathbb{N}} F_n$, where $F_n$ is a set of $\mathcal{L}$-structures with underlying set $[n]$.  For each $\mathcal{L}$-sentence $\psi$, set $\mu_n(\psi)$ to be the proportion of elements in $F_n$ which satisfy $\psi$.  Then the \emph{asymptotic probability} of $\psi$ is $\mu(\psi)=\lim_{n\rightarrow \infty} \mu_n(\psi)$ (if it exists).  We say $F$ has a \emph{labeled first-order limit law} if for each $\mathcal{L}$-sentence $\psi$, $\mu(\psi)$ exists, and we say $F$ has a \emph{labeled first-order $0$-$1$ law} if moreover, for each $\mathcal{L}$-sentence $\psi$, we have $\mu(\psi)\in \{0,1\}$.  The \emph{almost sure theory} of $F$ is the set of $\mathcal{L}$-sentences $\psi$ such that $\mu(\psi)=1$.  In \cite{KPR}, Kolaitis, Pr\"{o}mel, and Rothschild use the structural information they obtain about $\Forb(\{K_{l}\})=\bigcup_{n\in \mathbb{N}}\Forb_n(\{K_l\})$ for $l\geq 3$ to show that each such family has a labeled first-order $0$-$1$ law in the language of graphs and to give an axiomatization of its almost sure theory.

Given a set $X$, let ${X\choose 2}=\{Y\subseteq X: |Y|=2\}$ and $2^X=\{Y: Y \subset X\}$.  An \emph{$r$-graph} $G$ is a pair $(V,c)$, where $V$ is a (vertex) set, and  $c:{V\choose 2}\rightarrow 2^{[r]}$; we call $G$  a \emph{simple complete $r$-graph} if $|c(xy)|=1$ for all $xy \in {V \choose 2}$.  Elements of $M_r(n)$ are naturally viewed as simple complete $r$-graphs by  assigning edge colors corresponding to distances.  Given  a set $\mathcal{H}$ of $r$-graphs, let $\Forb^r_n(\mathcal{H})$ be the set of simple complete $r$-graphs with vertex set $[n]$ which contain no element of $\mathcal{H}$ as a substructure.  By taking $\mathcal{H}$ to be the set of simple complete $r$-graphs on three vertices which contain violations of the triangle inequality, we see that $M_r(n) = \Forb^r_n(\mathcal{H})$. In this way, we can view $M_r(n)$ as an edge-colored analogue of $\Forb_n(\mathcal{H})$. This analogy suggests that one could prove similar results as in \cite{EKR} and \cite{KPR} about $M_r(n)$.  In this paper we show that this is indeed the case, utilizing techniques from graph theory to describe the approximate structure of most elements of $M_r(n)$ for large $n$. 

We may view elements of $M_r(n)$ as first-order structures in the language $\mathcal{L}_r$ consisting of $r$ binary predicates, one for each edge color.  In this setting, as a corollary of our structural results, we are able to prove in the case when $r$ is even, that there is a labeled first-order $0$-$1$ law for $M_r=\bigcup_{n\in \mathbb{N}} M_r(n)$ and to give an axiomatization of its almost sure theory.  In this paper, we consider only $r\geq 3$ for the following reason.  There is no way to violate the triangle inequality using distances in $\{1,2\}$, so $M_2(n)$ consists of the set of \emph{all} simple complete $2$-graphs.  This means that given a pair $x,y$ of distinct elements of $[n]$, the distance between $x$ and $y$ is equal to $1$ in exactly half of the elements of $M_2(n)$.  For each $G\in M_2(n)$, associate a graph $\mathcal{G}$ with vertex set $[n]$ such that for each $x,y\in [n]$, there is an edge between $x$ and $y$ in $\mathcal{G}$ if and only if the distance between $x$ and $y$ is equal to $1$ in $G$.  Under this association, we see that $M_2(n)$ behaves exactly like the random graph $G(n,1/2)$, the structural properties of which have been studied extensively (see \cite{Bo}), and which is known to have a labeled first-order $0$-$1$ law \cite{Fagin, GKLT}.

The results of this paper may be of interest  to both combinatorialists and model theorists.  From the combinatorial perspective, our work appears to be the first extension of the classical enumeration results in extremal graph theory to the edge-colored setting.  The proofs of our main results will rely on a stability theorem which is proved using a multi-color version of the Szemer\'{e}di regularity lemma \cite{AxMa}.  While our proof techniques bear some resemblance to the classical results in \cite{EFR, EKR, KR}, we need several new ideas that are motivated by work on weighted Tur\'{a}n-type problems \cite{FK}.  Our contributions also add to existing results that study metric spaces as combinatorial objects \cite{CRT, KMPS, Ma2, Ma1}.  In particular, \cite{Ma1} and \cite{KMPS} address questions similar to ours in the continuous setting.   In \cite{Ma1}, Mascioni investigates the following problem.  Given an integer $n$ and a fixed set $X$ of $n$ points, if we assign i.i.d. uniform real numbers in $[0,1]$ to the elements of ${X\choose 2}$, what is the probability we get a metric space?  It is shown in \cite{Ma1} that this probability $p$ satisfies 
\begin{equation}\label{mas}
\left(\frac{1}{2}\right)^{n\choose 2} \le p \le \left(\frac{1}{2}\right)^{\lfloor n/2\rfloor} \left(\frac{2}{3}\right)^{\lfloor n/2\rfloor (\lceil n/2 \rceil -2)},\end{equation}
 where the lower bound is obtained by noting that any assignment of distances from $[\frac{1}{2},1]$ yields a metric space.  In more recent work, Kozma, Meyerovitch, Peled, and Samotij~\cite{KMPS} identify the set of metric spaces on $[n]$ having all distances in $[0,1]$ with elements in the cube $[0,1]^{{n\choose 2}}$.  Let $M_n$ be the subset of $[0,1]^{{n\choose 2}}$ which corresponds to the set of metric spaces on $[n]$.  Then \cite{KMPS} shows that  there are constants $c$ and $C$ such that 
\begin{eqnarray}\label{KMPS}
\frac{1}{2}+\frac{c}{\sqrt{n}} \leq (\text{vol} M_n)^{1/{n\choose 2}} \leq \frac{1}{2}+\frac{C}{n^c}.
\end{eqnarray}
They also prove that with high probability, all distances are between $1/2-n^{-c}$ and $1$. The upper bound in  (\ref{KMPS}) implies that the probability $p$ in (\ref{mas}) approaches the lower bound as $n\rightarrow \infty$.  
Given a fixed even $r\geq 4$, our results about $M_r(n)$ can be translated into results about metric spaces on $[n]$ with all distances  in $\{\frac{1}{r},\ldots, \frac{r-1}{r}, 1\}$.  In this setting, our Theorem \ref{aathm} says that almost all such  metric spaces (as $n\rightarrow \infty$) have all of their distances in $[\frac12, 1]$ therefore capturing a similar phenomenon as the results of \cite{KMPS}  (for odd $r$ the situation is slightly more complicated). If it were possible to generalize our results to the setting where $r \rightarrow \infty$ and $n$ is fixed, then they  could apply to the continuous setting.
 
From the model theory perspective, we provide a new example which may aid in understanding further why some classes of structures have labeled first-order limit laws and others do not.  There has been much investigation into finding sufficient conditions for when a class of finite structures has various types of logical limit laws.  One type of sufficient conditions, first introduced by Compton in \cite{Compton, ComptonII}, requires that the number of structures of size $n$ does not grow too quickly as $n\rightarrow \infty$.  The theorems in \cite{Compton, ComptonII} and  various extensions of them (for instance \cite{Bell, BurrisYeats}) provide a large number of examples of logical limit laws.  However, there are many examples of families with logical limit laws which fail these conditions on the growth rate of the family, for instance $\Forb(\{K_l\})$ for $l\geq 3$ fails these conditions but has a labeled first-order $0$-$1$ law \cite{KPR}.  $M_r$ also fails these conditions for all $r\geq 3$.  In \cite{Koponen2} Koponen presents conditions which cover more known examples.  In particular, it is shown in \cite{Koponen2} that the family of almost $l$-partite graphs for $l\geq 2$ has a logical limit law.  Koponen combines this with the main result of \cite{HPS} to prove the existence of logical limit laws for $\Forb(\{\mathcal{H}\})$ when $\mathcal{H}$ is a complete $(l+1)$-partite graph with parts of sizes $1,s_1,\ldots, s_l$, for some $1\leq s_1\leq \ldots \leq s_l$.  When $s_1=\ldots =s_l$, $\mathcal{H}=K_{l+1}$, so this generalizes the $0$-$1$ law proved in \cite{KPR} for $\Forb(\{K_l\})$,  $l\geq 3$.  More results on logical limit laws for various families of graphs appear in \cite{HK, HMNT, Koponen3, Lynch}.  However, these results do not apply to $M_r$, as elements of $M_r$ are not graphs.  

In \cite{Koponen}, Koponen studies the asymptotic probability of extension axioms in families of structures in finite relational languages satisfying certain general requirements.  This generality allows the results to be applied to structures other than graphs.  For example, Koponen combines results of \cite{Koponen} with the main results of \cite{BM} and \cite{PersonSchacht} to show certain families of hypergraphs with forbidden configurations have labeled first-order $0$-$1$ laws (see Example 10.7 of \cite{Koponen}).  Another paper which studies logical limit laws for more general languages is \cite{AK} by Ahlman and Koponen, which focuses on families of structures in finite relational languages which satisfy certain colorability requirements and have an underlying pregeometry.  While none of these results apply directly to $M_r$, a result of \cite{Koponen} does imply that a subfamily $C_r$ of $M_r$, (which will be defined later) has a labeled  first-order $0$-$1$ law.  Our results will show that when $r$ is even, almost all elements of $M_r$ are in $C_r$, which will yield that  $M_r$ has a labeled first-order $0$-$1$ law.  Therefore, this paper provides the combinatorial argument required to reduce the existence of a labeled first-order $0$-$1$ law for $M_r$ to the existence of one for $C_r$, while the fact that $C_r$ has a labeled first-order $0$-$1$ law follows from known results, and is in fact very easy to prove directly.  Part of the motivation for this work is the idea that having more examples of logical limit laws in languages other that of graphs, and seeing the techniques used to prove them, will  improve our general understanding of when a family of finite structures has a logical limit law.

\subsection{New Results}\label{newresults} In this section we state the results in this paper.  First we give some necessary definitions and notation.  Given  positive integers $r,s$ and a set $X$, set $[r]=\{1,\ldots, r\}$, ${X\choose s}=\{Y\subseteq X: |Y|=s\}$, and $2^X=\{Y : Y\subseteq X\}$.  Recall that an \emph{$r$-graph} $G$ is a pair $(V,c)$, where $V$ is a set, and $c:{V\choose 2}\rightarrow 2^{[r]}$.  We call $V$ the \emph{vertex set} of $G$ and $c$ the \emph{coloring} of $G$.  In the case when $|c(e)|\leq 1$ for every $e\in {V\choose 2}$, we say that $(V,c)$ is \emph{simple}, and when $c(e)\neq \emptyset$ for each $e\in {V\choose 2}$, we say $G$ is \emph{complete}.  Given integers $r,n\geq 3$, we consider $M_r(n)$ as the set of simple complete $r$-graphs $([n],c)$ satisfying the triangle inequality, i.e, for every three pairwise distinct elements $x,y,z$ of $[n]$ we have 
$$c(x,z)\leq c(x,y)+c(y,z).$$  Given a set $X$ and $\{x,y\}\in {X\choose 2}$, we will write $xy$ to mean $\{x,y\}$.  Given integers $i<j$, set $[i,j]=\{i,i+1, \ldots, j\}$.

\begin{definition}
For an even integer $r\geq 4$ and any integer $n$, let $C_r(n)$ be the set of  all simple complete $r$-graphs $G=([n],c)$ such that $c(e)\subset [\frac{r}{2}, r ]$ for all $e\in {[n]\choose 2}$.
\end{definition}

When $r$ is even, there is no way to violate the triangle inequality using distances in $[\frac{r}{2}, r ]$, so $C_r(n) \subset M_r(n)$.
The strongest structural result we will prove (Theorem \ref{aathm} below) says that when $r\geq 4$ is even, almost all elements in $M_r(n)$ are in  $C_r(n)$.

\begin{theorem}\label{aathm}
Let $r\geq 4$ be an even integer.  Then there is $\beta>0$ and $M>0$ such that for all $n\geq M$, 
$$
|C_r(n)|\geq |M_r(n)|(1-2^{-\beta n}).
$$
\end{theorem}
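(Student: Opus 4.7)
The plan is to define a many-to-one ``correction'' map sending a pair $(G, \vec a)$ --- where $G \in M_r(n)\setminus C_r(n)$ and $\vec a$ is an auxiliary recoloring --- to an element of $C_r(n)$, and then to bound the fiber sizes by exploiting the triangle inequality to gain exponential savings.

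For $G \in M_r(n)$, let $B(G) = \{e : c_G(e) < r/2\}$ be the set of ``small'' edges. Crucially, since $r$ is even, any $r$-graph all of whose colors lie in $[r/2,r]$ automatically satisfies the triangle inequality (the sum of any two colors is at least $r$). Thus for each $G \in M_r(n)\setminus C_r(n)$ and each $\vec a : B(G) \to [r/2,r]$, the $r$-graph $G^{\vec a}$ obtained from $G$ by recoloring every $e \in B(G)$ to $\vec a(e)$ belongs to $C_r(n)$. The resulting map $\Phi : (G,\vec a) \mapsto G^{\vec a}$ has domain of size $\sum_G (r/2+1)^{|B(G)|} \ge (r/2+1)\,|M_r(n)\setminus C_r(n)|$, giving
$$(r/2+1)\,|M_r(n)\setminus C_r(n)| \;\le\; \sum_{G' \in C_r(n)} |\Phi^{-1}(G')|.$$

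The preimages of a fixed $G' \in C_r(n)$ correspond to pairs $(S,\vec b)$ with $\emptyset \ne S \subseteq \binom{[n]}{2}$ and $\vec b : S \to [1, r/2-1]$ such that replacing $c_{G'}$ by $\vec b$ on $S$ still yields a metric. Unpacking the triangle inequality, this imposes: (i) for each $xy \in S$ and each $z$ with $xz, yz \notin S$, $|c_{G'}(xz) - c_{G'}(yz)| \le b(xy)$; and (ii) for each triangle with $xy, xz \in S$ and $yz \notin S$, $c_{G'}(yz) \le b(xy) + b(xz) \le r - 2$. Fixing $(S, \vec b)$ and counting admissible $G'$, condition (i) forces, for each $xy \in S$ and each available $z$, the pair $(c_{G'}(xz), c_{G'}(yz)) \in [r/2, r]^2$ to avoid the ``maximally separated'' pairs $(r/2, r)$ and $(r, r/2)$, yielding a multiplicative savings factor of at least $\alpha := 1 - 2/(r/2+1)^2 < 1$ per triple $(xy, z)$. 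For $|S| = k$ with moderate maximum degree, the total number of such triples is $\Theta(kn)$, giving a fiber bound of $(r/2+1)^{\binom{n}{2}}\alpha^{\Theta(kn)}$. Summing the entropy $\binom{\binom{n}{2}}{k}(r/2-1)^k$ against these savings produces a geometric series in $k$ bounded by $|C_r(n)|\cdot 2^{-\beta n}$ for some $\beta = \beta(r) > 0$; for larger $|S|$, condition (ii) forces $G'$ to take many prescribed small color values, supplying the extra savings required. Combined with $|M_r(n)| \ge |C_r(n)|$, the estimate $|M_r(n)\setminus C_r(n)| \le |C_r(n)|\cdot 2^{-\beta n}$ yields Theorem \ref{aathm}.

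The main obstacle is controlling intermediate values of $|S|$, where overlapping $S$-edges weaken condition (i) (fewer valid $z$ per edge) before condition (ii) becomes active enough to take over. A clean way to bypass this is to first invoke a stability theorem --- of the type underlying the paper's other main structural results, proved via a multi-color Szemer\'edi regularity lemma and a weighted Tur\'an-type bound in the spirit of \cite{FK} --- to reduce to the regime $|B(G)| \le \epsilon\binom{n}{2}$, and then carry out the fiber analysis in this restricted range, where $|S|$ is bounded and the case analysis becomes tractable.
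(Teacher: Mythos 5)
Your central observation --- that once $d^G(x,y)<r/2$, every third vertex $z$ must have $(d^G(x,z),d^G(y,z))$ avoid the extremal pairs $(r/2,r)$ and $(r,r/2)$, giving a per-vertex savings factor $<1$ --- is exactly the engine that drives the paper's proof as well. But the fiber-counting framing has a genuine gap that your proposed fix does not close. When $S$ contains overlapping edges, for instance a star $S=\{vu_1,\dots,vu_k\}$, the constraints from condition (i) for different edges $vu_i$ at a fixed $z$ all involve the single color $c_{G'}(vz)$, so the savings do \emph{not} multiply across edges; the true savings for such an $S$ is on the order of $\alpha^{\Theta(n)}$, not $\alpha^{\Theta(kn)}$. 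Meanwhile the entropy of choosing $(S,\vec b)$ with $|S|=k$ is roughly $2^{k\log n}$, so already for $k$ on the order of $n$ the entropy beats the savings and the sum over $(S,\vec b)$ diverges relative to $|C_r(n)|$. Invoking the stability theorem to restrict to $|B(G)|\le\epsilon\binom{n}{2}$ does not repair this: $\epsilon\binom{n}{2}$ is quadratic in $n$, and the problematic high-degree (star-like) configurations occur already at $|S|=\Theta(n)$, well inside that range. The case analysis over moderate $|S|$ is therefore not tractable in the way the sketch asserts, and the argument does not go through as written.

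The paper handles precisely this obstruction with a different decomposition. It splits $M_r(n)\setminus C_r(n)$ into $A_r(n,\epsilon)$, the graphs with some vertex whose small-color degree is at least $\epsilon n$ (these are exactly the graphs whose small-edge set $B(G)$ contains a large star, i.e.\ the configurations that break your fiber count), and $A'_r(n,\epsilon)$, the graphs where every vertex has small-color degree $<\epsilon n$. It kills $A_r$ with the regularity/stability machinery (Lemma \ref{boundA}, giving a $2^{-\beta n^2}$ bound), and bounds $A'_r$ by an explicit vertex-pair removal induction (Lemma \ref{A''lemma}): choose a small edge $xy$, recurse on $[n]\setminus\{x,y\}$, pay a small entropy cost $2^{O(n H(\epsilon))}$ for the small-color neighborhoods of $x$ and $y$ (this is where the degree bound is used), and collect the factor $(m(r)^2-2)^{n-2}$ for the remaining $z$. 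The degree condition is what makes the entropy manageable at each inductive step. If you want to salvage a correction-map proof, you would need to first isolate and separately bound the high-degree configurations --- exactly the role of $A_r$ --- before the fiber count can work.
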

When $r$ is even, $|C_r(n)|=(\frac{r}{2}+1)^{n\choose 2}$. Therefore Theorem \ref{aathm} yields that when $r$ is even,
$$
\left(\frac{r}{2}+1 \right)^{{n\choose 2}}\leq |M_r(n)| \leq \left( \frac{1}{1-2^{-\beta n}}\right)\left(\frac{r}{2}+1\right)^{{n\choose 2}}
$$
for some positive $\beta$ and sufficiently large $n$.  We obtain the following Corollary.
\begin{corollary}\label{evencor}
Let $r\geq 4$ be an even integer.  Then $|M_r(n)|=(\frac{r}{2}+1)^{{n\choose 2}+o(1)}$.
\end{corollary}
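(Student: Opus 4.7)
The plan is to derive the corollary directly from Theorem~\ref{aathm} via the two-sided bound already displayed in the excerpt, by passing to logarithms base $\frac{r}{2}+1$ and checking that the multiplicative error term collapses to $o(1)$ in the exponent.

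First I would record the explicit formula $|C_r(n)| = (\frac{r}{2}+1)^{\binom{n}{2}}$. This is immediate: since $r$ is even, every pair of distances in $[\frac{r}{2},r]$ satisfies the triangle inequality (any two such distances differ by at most $\frac{r}{2}$, which is at most their sum), so no constraint is imposed among edge colors, and each of the $\binom{n}{2}$ edges may independently receive any of the $r - \frac{r}{2} + 1 = \frac{r}{2}+1$ colors. Combined with $C_r(n) \subseteq M_r(n)$ and Theorem~\ref{aathm}, we obtain for all sufficiently large $n$:
$$
\left(\tfrac{r}{2}+1\right)^{\binom{n}{2}} \;\le\; |M_r(n)| \;\le\; \frac{1}{1-2^{-\beta n}}\left(\tfrac{r}{2}+1\right)^{\binom{n}{2}}.
$$

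Next I would take $\log_{r/2+1}$ of all three quantities, yielding
$$
\binom{n}{2} \;\le\; \log_{r/2+1}|M_r(n)| \;\le\; \binom{n}{2} \,-\, \log_{r/2+1}\!\left(1-2^{-\beta n}\right).
$$
Since $\beta>0$, we have $2^{-\beta n}\to 0$, and so $-\log_{r/2+1}(1-2^{-\beta n})\to 0$ as $n\to\infty$. (Quantitatively, $-\ln(1-2^{-\beta n}) = 2^{-\beta n} + O(2^{-2\beta n})$, which is exponentially small.) Therefore $\log_{r/2+1}|M_r(n)| = \binom{n}{2} + o(1)$, which rearranges to the claimed $|M_r(n)| = (\tfrac{r}{2}+1)^{\binom{n}{2}+o(1)}$.

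There is no genuine obstacle here: all the combinatorial work has already been done inside Theorem~\ref{aathm}, and the corollary is simply the assertion that an exponentially small multiplicative factor translates to an $o(1)$ additive term in the exponent. The only point to be careful about is the direction of the inequality after taking a logarithm of a number less than one, which is why $-\log_{r/2+1}(1-2^{-\beta n})$ is nonnegative and decays to zero.
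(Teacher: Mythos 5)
Your proposal is correct and follows exactly the route the paper itself takes: it uses the identity $|C_r(n)| = (\frac{r}{2}+1)^{\binom{n}{2}}$ for even $r$ (stated in Remark~\ref{lowerbound} and right after Theorem~\ref{aathm}), combines it with the two-sided bound displayed immediately before the corollary, and notes that the factor $\frac{1}{1-2^{-\beta n}}$ contributes only $o(1)$ to the exponent after taking logarithms. Nothing to add.
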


When $r$ is odd, we still obtain a result on the approximate structure of most elements of $M_r(n)$ (Theorem \ref{deltaclosethm} below), however the situation in this case is more complicated.

\begin{definition} Let $r\geq 3$ be an odd integer.  Define $C_r(n)$ to be the the set of simple complete $r$-graphs $G=([n],c)$ such that there is a partition $V_1\cup \ldots\cup  V_t$ of $[n]$ and for every $xy\in {[n]\choose 2}$, 
\[
c(xy) \subset \begin{cases}
[ \frac{r-1}{2}, r-1]& \text{if } xy\in {V_i\choose 2}\textnormal{ for some }i\in [t] \\
[\frac{r+1}{2}, r] & \text{if } x\in V_i, y\in V_j \textnormal{ for some }i\neq j\in [t].
\end{cases}
\]
\end{definition}
\noindent It is  easy  to see that for $r$ odd, $C_r(n)\subset M_r(n)$.
Given $\delta>0$, two $r$-graphs $G=(V,c)$ and $G'=(V,c')$ with the same vertex set $V$ are \emph{$\delta$-close} if $|\{e\in {V\choose 2}: c(e)\neq c'(e)\}|\leq \delta |V|^2$.  Set 
$$
C_r^{\delta}(n)=\{G\in M_r(n): \textnormal{there is }G' \in C_r(n)\textnormal{ such that }G\textnormal{ and }G'\textnormal{ are }\delta\textnormal{-close} \}. 
$$
We now state our structure theorem which holds for all $r\geq 3$.  Informally, it states that most members of $M_r(n)$ are in $C_r^{\delta}(n)$ for small $\delta$ and $n$ large enough depending on $\delta$.

\begin{theorem}\label{deltaclosethm}
Let $r\geq 3$ be an integer.   Then for all $\delta>0$, there exists an $M$ and $\beta >0$ such that $n>M$ implies
$$
\frac{|M_r(n)\setminus C_r^{\delta}(n)|}{|M_r(n)|}\leq \frac{|M_r(n)\setminus C_r^{\delta}(n)|}{\lceil \frac{r+1}{2} \rceil^{n\choose 2}}\leq 2^{-\beta n^2}.
$$
\end{theorem}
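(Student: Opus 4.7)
The plan is to reduce the enumeration problem to a Turán-type stability estimate via the multi-color Szemer\'edi regularity lemma. First observe that the two ratios in the theorem are comparable: any coloring $c:\binom{[n]}{2}\to\{\lceil (r+1)/2\rceil,\ldots,r\}$ automatically satisfies the triangle inequality, since the sum of any two such colors is at least $r$ (with equality only when $r$ is even), while every color is at most $r$. Hence $|M_r(n)|\geq \lceil (r+1)/2\rceil^{\binom{n}{2}}$, so the first ratio is bounded above by the second, and it suffices to prove $|M_r(n)\setminus C_r^{\delta}(n)|\leq \lceil (r+1)/2\rceil^{\binom{n}{2}}\,2^{-\beta n^2}$ for a suitable $\beta=\beta(r,\delta)>0$.

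Fix $G=([n],c)\in M_r(n)\setminus C_r^{\delta}(n)$ and apply the multi-color regularity lemma of Axenovich--Martin \cite{AxMa} with a small parameter $\epsilon$, producing an equitable partition $V_1\cup\cdots\cup V_t$ with $t=t(\epsilon)$ bounded, such that all but at most $\epsilon t^2$ pairs $(V_i,V_j)$ are $\epsilon$-regular in every color. Define a reduced $r$-graph $R$ on $[t]$ by letting $R(ij)\subseteq [r]$ be the set of colors appearing in $(V_i,V_j)$ with density at least $\eta$, for a suitably small $\eta\ll 1/r$. The triangle inequality on $G$, together with the standard counting lemma for regular triples, forces $R$ to satisfy a set-valued metric constraint: whenever $ij,jk,ik$ are all regular, for every $a\in R(ik)$ there exist $b\in R(ij)$ and $d\in R(jk)$ with $|b-d|\leq a\leq b+d$.

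Next enumerate: the number of colorings $c$ compatible with a fixed partition and fixed $R$ is at most $r^{\epsilon n^2+n}$ (accounting for edges inside parts, inside irregular pairs, and inside small-density color classes) times $\prod_{ij\text{ regular}}(|R(ij)|+r\eta)^{|V_i||V_j|}$. Since the number of equitable partitions of $[n]$ into $t$ parts is at most $t^n$ and the number of reduced graphs is at most $2^{rt^2}$, both subexponential in $n^2$, the entire counting reduces to establishing a Tur\'an-type stability inequality
\[
\prod_{ij}|R(ij)|^{|V_i||V_j|}\;\leq\;\left\lceil \tfrac{r+1}{2}\right\rceil^{\binom{n}{2}}\,2^{-\gamma n^2}
\]
uniformly over all reduced graphs $R$ arising from some $G$ that is $\delta$-far from $C_r(n)$, with $\gamma=\gamma(r,\delta)>0$.

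This last inequality is the main obstacle and is the substance of an auxiliary stability theorem. It asserts that among reduced $r$-graphs $R$ satisfying the set-valued triangle inequality above, the weighted product $\prod |R(ij)|^{|V_i||V_j|}$ is maximized precisely by the $C_r$-type assignments (all $R(ij)\subseteq[r/2,r]$ for $r$ even, or a bipartition-style split between $[(r-1)/2,r-1]$ and $[(r+1)/2,r]$ for $r$ odd), and that any $\delta'$-deviation from these extremal structures costs a multiplicative factor strictly less than $1$ on a positive density of pairs, compounding to a global saving of $2^{-\gamma n^2}$. This is a weighted multi-color Tur\'an problem in the spirit of the techniques of F\"uredi--K\"undgen \cite{FK} cited in the introduction, where the relevant ``trace'' of $G$ at the reduced level controls the exponential count. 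Granting this stability theorem, one chooses $\eta$ small compared to $\gamma$, then $\epsilon\ll\gamma$, so that $r^{\epsilon n^2+n}\cdot t^n\cdot 2^{rt^2}\cdot 2^{-\gamma n^2}\leq 2^{-\beta n^2}$ for some $\beta>0$ and all sufficiently large $n$, which yields the claimed bound.
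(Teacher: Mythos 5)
Your proposal follows essentially the same architecture as the paper's proof: multi-color regularity to obtain a reduced $r$-graph, a weight function $\prod_{ij}|R(ij)|$ on reduced graphs, a stability theorem saying the weight is near-maximal only for $C_r$-type reduced graphs, and a counting argument over equitable partitions and reduced graphs to absorb the sub-exponential factors. The weight you define (your $\prod_{ij}|R(ij)|^{|V_i||V_j|}$) is exactly $W(R)^{n^2/t^2}$ where $W$ is the paper's weight function, and the Tur\'an-type stability you invoke is the paper's Theorem~\ref{stabthm} plus its quantitative input Lemma~\ref{boundW(R)}.

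There is, however, one step your proposal elides that is a genuine and separate lemma in the paper, namely the ``blow-up'' direction. Your stability inequality is stated to hold ``uniformly over all reduced graphs $R$ arising from some $G$ that is $\delta$-far from $C_r(n)$,'' which bakes in the assumption that $G$ being $\delta$-far from $C_r(n)$ forces (some) reduced graph $R$ of $G$ to be far from the extremal $\tilde{C}_r(t)$-type configuration and hence to have small weight. That implication is not automatic and is the content of the paper's Lemma~\ref{lemma5}: one must show that if a reduced graph $R$ is $\delta/2$-close to $\tilde C_r(t)$, then $G$ itself is $\delta$-close to $C_r(n)$, which is proved by choosing the nearby $R'\in\tilde C_r(t)$, blowing up its vertex classes back to the $V_i$'s, recoloring the small number of edges inside parts, across irregular pairs, across pairs $ij\in\Delta(R,R')$, and across low-density color classes, and bounding all these error sets by $\delta n^2$. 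Only after proving this transfer lemma does the contrapositive give that $G\notin C^\delta_r(n)$ implies some $R\in Q_{\eta,d}(G)$ has $W(R)<m(r)^{(1-\epsilon)\binom{t}{2}}$, which is what feeds the counting. Without it, the chain from ``$G$ is $\delta$-far'' to ``the exponential count is deficient by $2^{-\gamma n^2}$'' has a gap. Everything else in your sketch, including the bound $r^{\epsilon n^2+n}\cdot t^n\cdot 2^{rt^2}$ for the overhead, matches the paper's Lemma~\ref{lemma3}.
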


\begin{corollary}\label{oddcor}
Let $r\geq3$ be an integer.  Then $|M_r(n)|=\lceil \frac{r+1}{2}\rceil^{{n\choose 2}+o(n^2)}$.
\end{corollary}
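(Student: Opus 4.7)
The plan is to deduce this enumeration corollary directly from Theorem~\ref{deltaclosethm} by sandwiching $|M_r(n)|$ between matching upper and lower bounds of the form $\lceil \frac{r+1}{2} \rceil^{\binom{n}{2}+o(n^2)}$. Write $k=\lceil \frac{r+1}{2}\rceil$. The lower bound is essentially free: taking $t=1$ in the definition of $C_r(n)$ shows that every coloring $c\colon \binom{[n]}{2}\to [\lfloor \frac{r-1}{2}\rfloor,\ r-1]$ (or, equivalently, any coloring using values only in $[\lceil \frac{r+1}{2}\rceil, r]$) automatically satisfies the triangle inequality, since any two such values sum to at least $r-1\ge r-1$ which bounds the third. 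Either of these two intervals contributes $k$ admissible colors per edge, giving $|M_r(n)|\ge |C_r(n)|\ge k^{\binom{n}{2}}$.

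For the upper bound, I would split $M_r(n)=C_r^{\delta}(n)\cup(M_r(n)\setminus C_r^{\delta}(n))$ for a $\delta=\delta(\varepsilon)>0$ to be chosen. For the "non-close" part, Theorem~\ref{deltaclosethm} immediately gives $|M_r(n)\setminus C_r^{\delta}(n)|\le 2^{-\beta n^2}k^{\binom{n}{2}}$, which is negligible. For the "close" part, I first bound $|C_r(n)|$: an element is specified by a partition $V_1\cup\cdots\cup V_t$ of $[n]$ together with an edge coloring, where every edge (inside or across parts) has exactly $\lfloor \frac{r-1}{2}\rfloor+1=k$ legal colors when $r$ is odd, and $\frac{r}{2}+1=k$ legal colors when $r$ is even. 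Since the number of set partitions of $[n]$ is at most $n^n = 2^{o(n^2)}$, I get
\[
|C_r(n)|\ \le\ n^n\cdot k^{\binom{n}{2}}\ =\ 2^{o(n^2)}\,k^{\binom{n}{2}}.
\]

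To pass from $C_r(n)$ to $C_r^{\delta}(n)$, each $G\in C_r^{\delta}(n)$ can be specified by first picking some $G'\in C_r(n)$ that is $\delta$-close to it, then picking the set $E$ of at most $\delta n^2$ edges on which they differ, and then picking new colors on $E$. This yields
\[
|C_r^{\delta}(n)|\ \le\ |C_r(n)|\sum_{j\le \delta n^2}\binom{\binom{n}{2}}{j}r^{j}\ \le\ |C_r(n)|\cdot 2^{f(\delta)n^2},
\]
where $f(\delta)\to 0$ as $\delta\to 0$, by the standard entropy estimate $\binom{N}{j}\le (eN/j)^{j}$ with $N=\binom{n}{2}$ and $j\le\delta n^2$. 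Combining these two pieces and choosing $\delta$ small enough so that $f(\delta)<\varepsilon$, we obtain $|M_r(n)|\le 2^{2\varepsilon n^2}k^{\binom{n}{2}}$ for all sufficiently large $n$. Since $\varepsilon>0$ was arbitrary, this together with the lower bound gives $|M_r(n)|=k^{\binom{n}{2}+o(n^2)}$, as required.

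There is no real obstacle here; the content of the corollary is entirely in Theorem~\ref{deltaclosethm}. The only bookkeeping subtlety is making sure that the trivial count of $C_r(n)$ has the correct base $k$ for both parities of $r$, and that the "perturbation" factor $2^{f(\delta)n^2}$ from enlarging $C_r(n)$ to $C_r^{\delta}(n)$ really can be made sub-$\varepsilon n^2$ by shrinking $\delta$, independent of $n$. Both are routine.
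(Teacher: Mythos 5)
Your proof takes essentially the same route as the paper's: the lower bound comes from $|C_r(n)|\ge k^{\binom{n}{2}}$, and the upper bound comes from splitting $M_r(n)$ into $C_r^{\delta}(n)$ and its complement, invoking Theorem~\ref{deltaclosethm} for the complement, and bounding $|C_r^{\delta}(n)|$ by first counting $C_r(n)$ (at most $n^n k^{\binom{n}{2}}$, via a partition plus an edge-coloring) and then multiplying by an entropy-type factor for the $\le\delta n^2$ perturbed edges. One slip in your justification of the lower bound: for even $r$, the interval $[\lfloor\frac{r-1}{2}\rfloor, r-1]=[\frac{r}{2}-1, r-1]$ is \emph{not} a metric set (already for $r=4$ this is $\{1,2,3\}$ and $1+1<3$), and the interval $[\lceil\frac{r+1}{2}\rceil, r]=[\frac{r}{2}+1, r]$ has only $k-1$ elements; the ``take $t=1$'' framing only applies to the odd-$r$ definition of $C_r(n)$. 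The correct metric interval of size $k$ for even $r$ is $[\frac{r}{2}, r]$. This does not affect the conclusion (the bound $|C_r(n)|\ge m(r)^{\binom{n}{2}}$ is Remark~\ref{lowerbound} in the paper), but the lower-bound argument should be stated with a case split on the parity of $r$.
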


We will prove as a consequence of Theorem \ref{aathm} that, when $r$ is even, $M_r=\bigcup_{n\in \mathbb{N}}M_r(n)$ has a labeled first-order $0$-$1$ law in the language $\mathcal{L}_r$ consisting of $r$ binary relation symbols, in the process giving an axiomatization of its almost sure theory.  

\begin{theorem}\label{01thm}
Let $r\geq4$ be an even integer and define $\mathcal{L}_r = \{R_1,\ldots, R_r\}$ where each $R_i$ is a binary relation symbol.  Given $n\in \mathbb{N}$, consider elements $G=([n],c)\in M_r(n)$ as $\mathcal{L}_r$-structures by interpreting for each $(x,y)\in [n]^2$, $R^G_i(x,y)\Leftrightarrow xy\in {[n]\choose 2}$ and $c(xy)=\{i\}$.  Then $M_r$ has a labeled first-order $0$-$1$ law.
\end{theorem}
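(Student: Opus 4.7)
The plan is to deduce the $0$-$1$ law for $M_r$ from the structural result of Theorem \ref{aathm} together with a Fagin-style extension-axiom argument for the much simpler family $C_r = \bigcup_{n\in\mathbb{N}} C_r(n)$.

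First I would reduce the question from $M_r$ to $C_r$. Each element of $C_r(n)$, viewed as an $\mathcal{L}_r$-structure, interprets $R_i$ as the empty relation for $i<r/2$, and the uniform measure on $C_r(n)$ is exactly that of i.i.d.\ uniform edge-colorings of $K_n$ by $\{r/2,\ldots,r\}$; in particular $|C_r(n)| = (r/2+1)^{\binom{n}{2}}$. Using the inclusion $C_r(n)\subseteq M_r(n)$ and Theorem \ref{aathm}, for every $\mathcal{L}_r$-sentence $\psi$ a direct calculation yields
\[
\bigl|\mu^{M_r}_n(\psi) - \mu^{C_r}_n(\psi)\bigr| = O(2^{-\beta n}),
\]
so it suffices to prove a $0$-$1$ law for $C_r$ and simultaneously identify the almost-sure theory of $M_r$.

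Next I would describe the candidate almost-sure theory $T$ as the theory of the Fra\"{i}ss\'{e} limit of the class $\mathcal{K}$ of finite simple complete $\mathcal{L}_r$-structures whose edge colors lie in $\{r/2,\ldots,r\}$; standard Fra\"{i}ss\'{e} theory yields existence and uniqueness of this limit, $\aleph_0$-categoricity, and hence completeness of $T$. Concretely, $T$ is axiomatized by the universal sentences expressing that each $R_i$ is symmetric and irreflexive, that the $R_i$ are pairwise disjoint, that exactly one of $R_{r/2},\ldots,R_r$ holds between any two distinct vertices, and that $R_i$ is empty for $i<r/2$; together with the one-point \emph{extension axioms} $\phi_{A,B}$ indexed by pairs $A\subset B$ in $\mathcal{K}$ with $|B|=|A|+1$, asserting that every embedded copy of $A$ extends to a copy of $B$.

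I would then verify by the standard counting argument that each $\phi_{A,B}$ holds over $C_r(n)$ with probability $1-o(1)$: once a copy of $A$ on $k$ vertices is fixed, each other vertex independently realizes the prescribed one-point extension with probability $(r/2+1)^{-k}$, so the probability that no such vertex exists is at most $(1-(r/2+1)^{-k})^{n-k}$; a union bound over the at most $n^k$ candidate copies of $A$ then gives $\mu^{C_r}(\phi_{A,B})=1$. Combined with the reduction, $\mu^{M_r}(\phi_{A,B})=1$ for every extension axiom. Completeness of $T$ now implies that for every $\mathcal{L}_r$-sentence $\psi$ either $T\vdash \psi$ or $T\vdash \neg\psi$, so by compactness $\psi$ (or its negation) is a consequence of finitely many extension axioms and thus has asymptotic probability $1$ on $M_r$, establishing $\mu^{M_r}(\psi)\in\{0,1\}$. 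The main obstacle — concentration of $M_r$ on $C_r$ — is already resolved by Theorem \ref{aathm}; the remainder is the classical Fagin/Fra\"{i}ss\'{e} method specialized to the uniform $(r/2+1)$-coloring of $K_n$.
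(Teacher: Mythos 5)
Your proposal is correct and follows essentially the same route as the paper: reduce from $M_r$ to $C_r$ via Theorem \ref{aathm}, then run a Fagin-style argument on $C_r$ viewed as uniform i.i.d.\ $\{r/2,\ldots,r\}$-edge-colorings of $K_n$, using the Fra\"{i}ss\'{e} limit of the corresponding class and extension axioms to axiomatize the almost-sure theory. The only cosmetic difference is that you write out the union-bound calculation verifying that each extension axiom has asymptotic probability one, while the paper defers this step to a textbook reference and to Koponen's Theorem 3.15.
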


When $r$ is odd, the error term in Corollary~\ref{oddcor} cannot be improved from $o(n^2)$ to $O(n)$, and moreover, Theorem~\ref{aathm} does not hold (See Section 7 for a detailed discussion).  
This leads us to make the following conjecture.

\begin{conjecture}\label{conjecture}
Let $r\geq 3$ be an odd integer and consider elements of $M_r(n)$ as $\mathcal{L}_r$-structures as in Theorem \ref{01thm}.  Then $M_r=\bigcup_{n\in \mathbb{N}} M_r(n)$ has a labeled first-order limit law, but does not have a labeled first-order $0$-$1$ law.
\end{conjecture}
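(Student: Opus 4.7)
The plan is to strengthen Theorem \ref{deltaclosethm} for odd $r$ by associating to each typical $G\in M_r(n)$ a canonical partition $\pi(G)$, and then to prove the limit law via an extension-axiom argument while exhibiting a single sentence whose asymptotic probability lies strictly between $0$ and $1$.  Define $\pi(G)$ to be the partition obtained by taking the transitive closure of the relation $\{(x,y) : c(xy)=(r-1)/2\}$ and declaring all remaining vertices to be singletons.  A first bootstrap lemma, built on top of Theorem \ref{deltaclosethm} together with a sharpening of its error term to exponentially small order, should show that with probability $1 - 2^{-\Omega(n)}$ the partition $\pi(G)$ genuinely certifies $G\in C_r(n)$, i.e., no color-$r$ edge lies within a part of $\pi(G)$ and no color-$(r-1)/2$ edge lies between two distinct parts.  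A second, purely enumerative lemma should show that for every $\epsilon>0$ there is a constant $L=L(\epsilon,r)$ such that, with probability at least $1-\epsilon$, the non-singleton parts of $\pi(G)$ together occupy at most $L$ vertices; call this union the \emph{core} of $G$.

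For the existence of a limit law (part (a)), I would proceed by an extension-axiom argument.  For each finite isomorphism type $K$ of a core, let $p_K$ be the limiting probability that the core of $G$ is isomorphic to $K$; existence of this limit is what the preceding lemmas, combined with a standard Poissonization-type count of rare small sub-configurations, are designed to yield.  Conditional on core $K$, the remaining $n-|K|$ vertices are singletons of $\pi(G)$, so all non-core edges are colored from $\{(r+1)/2,\ldots,r\}$; the class of complete $\tfrac{r+1}{2}$-edge-colored graphs is a Fra\"{i}ss\'{e} class, and its conditional distribution satisfies a first-order $0$-$1$ law with limit measure $\mu_K$ via a standard back-and-forth argument.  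Using the tail bound on $|K|$ to justify swapping limit and sum, one obtains for every $\mathcal{L}_r$-sentence $\psi$,
\[
\mu(\psi) \;=\; \sum_{K} p_K\, \mu_K(\psi),
\]
where $K$ ranges over isomorphism types of cores and each $\mu_K(\psi) \in \{0,1\}$.

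For the failure of a $0$-$1$ law (part (b)), one exhibits a sentence whose asymptotic probability lies strictly in $(0,1)$.  A convenient choice is
\[
\psi_0 \;:=\; \exists x\, \exists y\, \bigl( R_{(r-1)/2}(x,y) \wedge \forall z\,(z\neq x \wedge z\neq y \rightarrow \neg R_{(r-1)/2}(x,z) \wedge \neg R_{(r-1)/2}(y,z))\bigr),
\]
which holds exactly when the color-$(r-1)/2$ graph of $G$ contains a connected component of size $2$, equivalently when $\pi(G)$ has a part of size exactly $2$.  Both events ``some size-$2$ part exists'' and ``no size-$2$ part exists'' are captured by cores of positive $p_K$-mass (guaranteed by a Poisson-type limit for the counts of small part-sizes), forcing $\mu(\psi_0) \in (0,1)$.

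The main obstacle is the two bootstrap lemmas, particularly the tail bound on core size.  Because both within-part and between-part edges admit exactly $\tfrac{r+1}{2}$ colors, no single partition is favored on entropic grounds, and the same graph is typically compatible with a whole family of partitions, so the overcounting must be disentangled explicitly via a canonicalization procedure.  Sharpening the $o(n^2)$ error term of Theorem \ref{deltaclosethm} to an exponentially small one, analogous to what Theorem \ref{aathm} provides in the even case, is where the principal new technical work lies.
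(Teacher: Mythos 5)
This statement is labeled as a \emph{conjecture} in the paper, and the paper does not prove it; it remains open. The concluding remarks (Section 7) only offer partial evidence, and explicitly identify the key missing piece. Any ``proof'' must therefore be evaluated on its own merits, and yours contains a fundamental contradiction with results the paper \emph{does} prove.

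Your first bootstrap lemma asserts that, after ``sharpening the error term to exponentially small order,'' the canonical partition $\pi(G)$ certifies $G\in C_r(n)$ with probability $1-2^{-\Omega(n)}$. The paper shows this is impossible for odd $r$. The concluding remarks and the appendix establish
\[
|C_r(n)| \leq (1-r^{-66r^2})\,|M_r(n)| \qquad \text{for all } n\geq 4,
\]
via an explicit map $f:C_r(n)\to M_r(n)\setminus C_r(n)$ that is at most $r^{65r^2}$-to-one. Thus a positive proportion of $M_r(n)$ lies outside $C_r(n)$ --- and the paper even states that the authors suspect $\lim_n |C_r(n)|/|M_r(n)|=0$. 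The obstruction is concrete: a typical $G$ contains a ``bad cycle,'' a path of color-$\tfrac{r-1}{2}$ edges whose endpoints are at distance $r$. Such a cycle means the transitive closure $\pi(G)$ merges vertices whose edge color is $r$, which is a between-part color, so $\pi(G)$ does \emph{not} certify membership in $C_r(n)$. Your entire conditional-on-core analysis, and the decomposition $\mu(\psi)=\sum_K p_K\mu_K(\psi)$, depends on this false premise.

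Moreover, the paper shows that an exponentially good (even $O(n)$) error term in Corollary~\ref{oddcor} is unattainable: matchings of color-$(\tfrac{r-1}{2})$ edges alone already give $|M_r(n)|\geq m(r)^{\binom{n}{2}+\Omega(n\log n)}$, whereas $|C_r(n)|$ has $n\log n$ in the exponent as well, so a polynomially small core is not forced by entropy. The genuinely hard part --- which the paper explicitly leaves open --- is whether any such canonicalization has a limit law at all. The paper's own counting argument actually points in the opposite direction to your lemma: it shows that if $|C_r(n)|/|M_r(n)|$ were bounded below by some $\alpha>0$, then the $0$-$1$ law fails for the sentence ``there exists a copy of $H$,'' but they cannot prove such a lower bound. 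That unresolved dichotomy is precisely what makes this a conjecture.

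Your choice of witness sentence $\psi_0$ (existence of an isolated color-$\tfrac{r-1}{2}$ edge) is in the right spirit --- it is the same kind of component-counting statistic that the paper's bad-cycle construction exploits --- but you cannot justify that its probability converges, let alone to a value in $(0,1)$, without first resolving exactly the open problem. The proposal is not a proof; it rediscovers why the problem is hard while asserting as a lemma the very thing the paper demonstrates to be false.
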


\subsection{Notation and outline} \label{notationandoutline}
Throughout the paper, we will omit floors and ceilings where they are unimportant to the argument.  Let $r\geq 3$ be an integer and let $G$ be an $r$-graph.  We will write $V(G)$ to denote the vertex set of $G$ and $c^G$ to denote its coloring.  For simplicity of notation we set $E(G)={V(G)\choose 2}$, and for subsets $X,Y\subseteq V(G)$, set $E(X,Y)=\{xy \in E(G): x\in X, y\in Y\}$, and $E(X)=E(X,X)$.  Given a simple complete $r$-graph $G$, we define $d^G: E(G)\rightarrow [r]$ to be the function sending $xy\in E(G)$ to the unique $i\in [r]$ such that $c^G(xy) =\{i\}$.  We will sometimes also wish to discuss graphs, meaning a set equipped with a single binary, symmetric, irreflexive relation.  In order to avoid confusion, graphs with be denoted by $\mathcal{G}=(\mathcal{V},\mathcal{E})$, where $\mathcal{V}$ is the vertex set of $\mathcal{G}$ and $\mathcal{E}\subseteq {\mathcal{V}\choose 2}$ is the edge set of $\mathcal{G}$.  Given a graph $\mathcal{G} = (\mathcal{V},\mathcal{E})$ and $v\in \mathcal{V}$, we will write $\mathcal{DEG}(v)=|\{u: uv\in \mathcal{E}\}|$.

By a \emph{violating triple} we will mean a tuple $(i,j,k)\in \mathbb{N}^3$ such that $|i-j|\leq k\leq i+j$ is false.  By a \emph{violating triangle}, we will mean an $r$-graph $H$ such that $V(H)=\{x,y,z\}$, and for some violating triple $(i,j,k)$, $i\in c^H(xy)$, $j\in c^H(yz)$, and $k\in c^H(xz)$.  Define a \emph{metric} $r$-graph to a be an $r$-graph $G=(V,c)$ which contains no violating triangles.  Given two $r$-graphs $H$ and $G$, with $|V(G)|=n$ and $V(H)=\{y_1,\ldots, y_m\}$, we say $G$ \emph{omits} $H$ if for all $(x_1,\ldots, x_m)\in V(G)^m$, there is $1\leq s<t\leq m$ such that $c^G(x_s x_t) \neq c^H(y_s y_t)$.  When $G$ does not omit $H$, we say $G$ \emph{contains a copy of $H$}.  Given two finite $r$-graphs $G$ and $G'$ with $V(G)=V(G')$, set
$$
\Delta(G,G') =\{xy\in E(G): c^G(xy)\neq c^{G'}(xy)\}.
$$
In this notation, given $\delta>0$, $G$ and $G'$ are $\delta$-close if $|\Delta(G,G')|\leq \delta |V(G)|^2$.  Given a set of finite $r$-graphs $S$ and a finite $r$-graph $G$, say that $G$ is \emph{$\delta$-close to} $S$ if $G$ is $\delta$-close to some element of $S$.   
Given $r\geq 3$, set 
$$
m(r)=\Bigg\lceil \frac{r+1}{2}\Bigg\rceil.
$$
A subset $A\subseteq [r]$ is called a \emph{metric set} if $A^3$ contains no violating triples.  Note that when $r$ is even, $[\frac{r}{2},r]$ is a metric subset of $[r]$ of size $m(r)$.  When $r$ is odd, both $[\frac{r-1}{2}, r-1]$ and $[\frac{r+1}{2}, r]$ are metric subsets of $[r]$ of size $m(r)$.  As remarked earlier, any $r$-graph meeting the requirements in the definition of $C_r(n)$ is already in $M_r(n)$.  In particular, $C_r(n)$ contains all simple complete metric $r$-graphs with distances in $[m(r),r]$, therefore $|C_r(n)|\geq m(r)^{n\choose 2}$.  These observations yield the following fact we will use throughout the paper.

\begin{remark}\label{lowerbound}
Let $n, r\geq 3$ be integers. Then 
$$
|M_r(n)|\geq |C_r(n)|\geq m(r)^{n\choose 2},
$$
and if $r$ is even, then $|C_r(n)|=m(r)^{{n\choose 2}}$.
\end{remark}

We now give an outline of the paper.   In section \ref{01law} we introduce the notion of a labeled  first-order $0$-$1$ law, and prove as a consequence of Theorem \ref{aathm} that Theorem \ref{01thm} is true, i.e. when $r\geq4$ is an even integer, $M_r$ has a labeled first-order $0$-$1$ law in the language consisting of $r$ binary predicates.  In section \ref{enumsection} we prove Corollary \ref{oddcor}, which provides an asymptotic enumeration of $M_r(n)$ as a consequence of Theorem \ref{deltaclosethm}.  In section \ref{stabilitysection} we provide preliminaries and notation regarding a multi-color version of Szemer\'{e}di's regularity lemma, then we prove Theorem \ref{stabthm}, which is a stability result needed to prove Theorem \ref{deltaclosethm}.  In section \ref{provingtheorem3} we prove Theorem \ref{deltaclosethm}, and in section \ref{provingtheorem1} we prove Theorem \ref{aathm}.  Finally, in section \ref{concludingremarks}, we explain why Corollary \ref{evencor} and Theorem \ref{aathm} do not hold when $r$ is odd, then discuss open questions concerning $M_r(n)$ when $r$ is odd.

\section{Proof of logical $0$-$1$ law}\label{01law}

\setcounter{theorem}{0}
\numberwithin{theorem}{section}

In this section we assume Theorem \ref{aathm} and prove Theorem \ref{01thm}, which says that for even integers $r\geq 4$, the family $M_r=\bigcup_{n\in \mathbb{N}}M_r(n)$ has a labeled first-order $0$-$1$ law in the language $\mathcal{L}_r$ consisting of $r$ binary relation symbols. The outline of the argument is as follows.  Theorem \ref{aathm} allows us to reduce Theorem \ref{01thm} to showing the existence of a labeled first-order $0$-$1$ for the subfamily $C_r=\bigcup_{n\in \mathbb{N}} C_r(n)$. The existence of a labeled first-order $0$-$1$ law for $C_r$ follows from a standard argument.  In particular, it follows from a theorem in \cite{Koponen} which generalizes the method in \cite{Fagin}.  We assume familiarity with the theory of Fra\"{i}ss\'{e} limits.  For an introduction to this subject, see chapter 7 of \cite{Hodges}.  For a survey on logical $0$-$1$ laws see \cite{Winkler}.  We begin with the required terminology concerning $0$-$1$ laws.

\begin{definition}\label{01def}
Let $\mathcal{L}$ be a finite first-order language.  For each $n$, suppose $V_n$ is a set of $\mathcal{L}$-structures on $[n]$, and  $V=\bigcup_{i\in \mathbb{N}}V_i$.  
\begin{enumerate}
\item $\mu_n^V: V_n\rightarrow [0,1]$ is the probability measure defined by setting $\mu^V_n(G) = \frac{1}{|V_n|}$ for each $G\in V_n$.
\item Given a first-order $\mathcal{L}$-sentence $\psi$, set $\mu^V_n(\psi) = \mu_n^V(\{ G\in V_n: G\models \psi\})$ and $\mu^V(\psi) = \lim_{n\rightarrow \infty} \mu^V_n(\psi)$.  When $\mu^V(\psi)$ exists, it is called the \emph{labeled asymptotic probability of $\psi$}.
\item The \emph{almost sure theory of $V$} is $T^V_{as} = \{\psi: \psi \textnormal{ is an }\mathcal{L}\textnormal{-sentence and }\lim_{n\rightarrow \infty} \mu^V_n(\psi) =1\}$.  
\item $V$ has a \emph{labeled first-order $0$-$1$ law} if for each first-order $\mathcal{L}$-sentence $\psi$, $\mu^V(\psi)$ exists and is $0$ or $1$.  
\end{enumerate}
\end{definition}  
It is straightforward to show that $V$ has a labeled first-order $0$-$1$ law if and only if $T^V_{as}$ is a complete, consistent theory with infinite models.  

Fix an even integer $r\geq 4$ for the rest of the section.  Define $\mathcal{L}_r=\{R_1(x,y),\ldots, R_r(x,y)\}$, where each $R_i(x,y)$ is a binary relation symbol.  Given an $r$-graph $G$, make $G$ into an $\mathcal{L}_r$-structure by interpreting for all $(x,y)\in V(G)^2$, 
$$
R^G_i(x,y) \Leftrightarrow xy\in E(G)\textnormal{ and }i\in c^G(xy).
$$
From here on, all $r$-graphs will be considered as $\mathcal{L}_r$-structures in this way. We now prove that as a consequence of Theorem \ref{aathm}, $M_r$ has a labeled first-order $0$-$1$ law if and only if $C_r$ does.

\begin{lemma}\label{M_riffC_r}
For all $\mathcal{L}_r$-sentences $\psi$, if $\mu^{C_r}(\psi)$ exists, then $\mu^{M_r}(\psi)$ exists, and moreover, $\mu^{C_r}(\psi) =\mu^{M_r}(\psi)$.
\end{lemma}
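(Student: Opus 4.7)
The plan is to use Theorem \ref{aathm} in the form it gives: the proportion of $M_r(n)$ lying outside $C_r(n)$ is exponentially small in $n$, and in particular $|C_r(n)|/|M_r(n)|\to 1$. Given this, for any fixed $\mathcal{L}_r$-sentence $\psi$, the models of $\psi$ in $M_r(n)$ decompose into models in $C_r(n)$ plus a negligible boundary term, and we can transfer the limiting probability from one family to the other.

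More concretely, first I would fix an $\mathcal{L}_r$-sentence $\psi$ and use $C_r(n)\subseteq M_r(n)$ to write
\begin{equation*}
|\{G\in M_r(n):G\models\psi\}| \;=\; |\{G\in C_r(n):G\models\psi\}| + \varepsilon_n,
\end{equation*}
where $0\le \varepsilon_n \le |M_r(n)\setminus C_r(n)|$. Dividing by $|M_r(n)|$ gives
\begin{equation*}
\mu^{M_r}_n(\psi) \;=\; \mu^{C_r}_n(\psi)\cdot\frac{|C_r(n)|}{|M_r(n)|} \;+\; \frac{\varepsilon_n}{|M_r(n)|}.
\end{equation*}

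Next I would quote Theorem \ref{aathm}: for some $\beta>0$ and all sufficiently large $n$,
\begin{equation*}
\frac{|M_r(n)\setminus C_r(n)|}{|M_r(n)|} \;\le\; 2^{-\beta n},
\end{equation*}
so the error term $\varepsilon_n/|M_r(n)|$ tends to $0$ and $|C_r(n)|/|M_r(n)|$ tends to $1$. Assuming that $\mu^{C_r}(\psi)$ exists, $\mu^{C_r}_n(\psi)$ is bounded in $[0,1]$ and converges to $\mu^{C_r}(\psi)$, so passing to the limit in the displayed identity shows that $\mu^{M_r}_n(\psi)$ converges and equals $\mu^{C_r}(\psi)$.

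There is essentially no obstacle beyond invoking Theorem \ref{aathm} correctly; the only subtle point is that the inequality in Theorem \ref{aathm} is stated as $|C_r(n)|\ge|M_r(n)|(1-2^{-\beta n})$, which I would immediately rewrite as $|M_r(n)\setminus C_r(n)|\le|M_r(n)|\,2^{-\beta n}$ so that both the boundary term $\varepsilon_n/|M_r(n)|$ and $1-|C_r(n)|/|M_r(n)|$ are simultaneously controlled.
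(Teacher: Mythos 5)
Your argument is correct and follows the same route as the paper: decompose the count of $\psi$-models in $M_r(n)$ into the contribution from $C_r(n)$ plus an error bounded by $|M_r(n)\setminus C_r(n)|$, invoke Theorem \ref{aathm} to show both that this error term vanishes and that $|C_r(n)|/|M_r(n)|\to 1$, and pass to the limit. The paper phrases the second step via a sandwich inequality on $|\{G\in C_r(n):G\models\psi\}|/|M_r(n)|$ rather than factoring out $\mu^{C_r}_n(\psi)\cdot|C_r(n)|/|M_r(n)|$, but this is a cosmetic difference.
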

\begin{proof}
Assume $\mu^{C_r}(\psi)$ exists.  For all $n$, 
\begin{eqnarray}\label{ineq}
\mu^{M_r}_n(\psi) = \frac{|\{G\in M_r(n)\setminus C_r(n): G\models \psi\}|}{|M_r(n)|} +\frac{|\{G\in C_r(n): G\models \psi\}|}{|M_r(n)| }.
\end{eqnarray}
By Theorem \ref{aathm}, there is $\beta >0$ such that for sufficiently large $n$, 
$$
|M_r(n)\setminus C_r(n)|\leq 2^{-\beta n} |M_r(n)|\textnormal{ and }|C_r(n)|\leq |M_r(n)|\leq (1+2^{-\beta n})|C_r(n)|,
$$
where the second inequality is because for all $n$, $C_r(n)\subseteq M_r(n)$.  Thus for sufficiently large $n$, 
$$
\frac{|\{G\in C_r(n): G\models \psi\}|}{|C_r(n)|(1+2^{-\beta n}) }\leq \frac{|\{G\in C_r(n): G\models \psi\}|}{|M_r(n)|} \leq  \frac{|\{G\in C_r(n): G\models \psi\}|}{|C_r(n)| }.
$$
and
$$
\frac{|\{G\in M_r(n)\setminus C_r(n): G\models \psi\}|}{|M_r(n)|} \leq 2^{-\beta n}.
$$
Therefore 
$$
\lim_{n\rightarrow \infty} \frac{|\{G\in M_r(n)\setminus C_r(n): G\models \psi\}|}{|M_r(n)|} =0
$$
and
$$
\lim_{n\rightarrow \infty} \frac{|\{G\in C_r(n): G\models \psi\}|}{|M_r(n)| } = \lim_{n\rightarrow \infty} \frac{|\{G\in C_r(n): G\models \psi\}|}{|C_r(n)| } =\mu^{C_r}(\psi).
$$
Combining these with (\ref{ineq}) yields that $\mu^{M_r}(\psi) = \mu^{C_r}(\psi)$.
\end{proof}

Lemma \ref{M_riffC_r} implies that to prove Theorem \ref{01thm}, it suffices to show $C_r$ has a labeled first-order $0$-$1$ law, and further, that an axiomatization of $T_{as}^{C_r}$ will also axiomatize $T_{as}^{M_r}$. Towards stating the axiomatization of $T_{as}^{C_r}$, we now fix some notation.  Fix an integer $k\geq 2$.  Given $A\in M_r(k)$, write $x_1\ldots x_k \equiv A$ as short hand for the $\mathcal{L}_r$-formula which says that sending $x_i\mapsto i$ makes $x_1\ldots x_k$ isomorphic to $A$.  Explicitly we mean the formula $\psi(x_1,\ldots, x_k)$ given by
$$
\bigwedge_{1\leq i<j\leq k} \Bigg( R_{d^A(i,j)}(x_i,x_j) \wedge \bigwedge_{s\neq d^A(i,j)} \neg R_s(x_i,x_j)\Bigg).
$$
Given $A\in M_r(k)$ and $A'\in M_r(k+1)$, write $A\prec A'$ to denote that for all $ij\in {[k]\choose 2}$, $d^A(ij)=d^{A'}(ij)$.  Given such a pair $A\prec A'$, let $\sigma_{A'/A}$ be the following sentence:
$$
\forall x_1\ldots \forall x_k ((x_1\ldots x_k \equiv A) \rightarrow \exists y (x_1\ldots x_k y\equiv A')).
$$
Sentences of this form are called \emph{extension axioms}.  Let $T_1$ be a set of $\mathcal{L}_r$-sentences axiomatizing an infinite metric space with distances all in $[\frac{r}{2},r]$, 
\begin{align*}
T_2 &= \bigcup_{k\in \mathbb{N}} \{ \sigma_{A'/A}: A\in C_r(k), A'\in C_r(k+1), A\prec A' \},\text{ and }\\
T &=T_1\cup T_2.
\end{align*}
$T$ will be the set of sentences axiomatizing $T_{as}^{C_r}=T_{as}^{M_r}$.
\bigskip

\noindent{\bf Proof of Theorem \ref{01thm}}.  By the arguments above, it suffices to show $C_r$ has a labeled first-order $0$-$1$ law.  Let $\mathcal{C}_r$ be the class of $\mathcal{L}_r$-structures obtained by closing $C_r$ under isomorphism.   That $\mathcal{C}_r$ is a Fra\"{i}ss\'{e} class is straightforward to see.  For the sake of completeness we verify that $\mathcal{C}_r$ has the amalgamation property.  Given $X,Y \in \mathcal{C}_r$, an isometry $f:X\rightarrow Y$ is an injective map from $V(X)$ into $V(Y)$ such that for all $xy\in E(X)$, $d^X(x,y) = d^Y(f(x),f(y))$.  Suppose $A, B, C\in \mathcal{C}_r$ and $f:C\rightarrow A$, $g:C\rightarrow B$ are isometries.  Without loss of generality, assume that $f$ and $g$ are inclusion maps and $V(A)\cap V(B) = V(C)$.  To verify the amalgamation property, we want to find $D\in \mathcal{C}_r$ and isometries $h:A\rightarrow D$ and $s:B\rightarrow D$ such that for all $c\in V(C)$, $s(c)=h(c)$.  We do this by setting $V(D)=V(A)\cup V(B)$ and for $xy\in {V(D)\choose 2}$, setting
\begin{eqnarray} \label{AP}
d^D(x,y) = \begin{cases}
d^A(x,y) & \textnormal{ if } xy\in E(A),\\
d^B(x,y) & \textnormal{ if } xy \in E(B)\setminus E(A),\\
r & \textnormal{ if } x\in (V(A)\setminus V(C)), y\in (V(B)\setminus V(C)).
\end{cases} 
\end{eqnarray}
$D$ is a simple complete $r$-graph with $d^D(x,y)\in [\frac{r}{2},r]$ for all $xy\in E(D)$, so $D\in \mathcal{C}_r$.   Define $h:A\rightarrow D$ and $s:B\rightarrow D$ to be the inclusion maps.  Then for all $c\in V(C)$, $h(c)=s(c)=c$, as desired, and $\mathcal{C}_r$ has the amalgamation property.  Note that we could have chosen any color in $[\frac{r}{2},r]$ to assign the edges in the third case of (\ref{AP}), as there are no forbidden configurations in $\mathcal{C}_r$.  We leave the rest of the verification that $\mathcal{C}_r$ is a Fra\"{i}ss\'{e} class to the reader.

Let $FL(\mathcal{C}_r)$ be the Fra\"{i}ss\'{e} limit of $\mathcal{C}_r$ and make $FL(\mathcal{C}_r)$ into an $\mathcal{L}_r$-structure by interpreting, for each $(x,y)\in FL(\mathcal{C}_r)^2$, $R_i(x,y)$ if and only if $d^{\mathcal{C}_r}(x,y)=i$.  It is a standard exercise to see that $FL(\mathcal{C}_r)\models T$ and further that $T$ axiomatizes $\Th(FL(\mathcal{C}_r))$.  Therefore $T$ is a complete, consistent $\mathcal{L}_r$-theory, so to show $C_r$ has a labeled first-order $0$-$1$ law, it suffices to show that for each $\psi \in T$, $\mu^{C_r}(\psi)=1$.  For $\psi\in T_1$, this is obvious.  Because there are no forbidden configurations in $C_r$, a straightforward counting argument shows that for $\psi \in T_2$, $\mu^{C_r}(\neg \psi) =0$, and therefore $\mu^{C_r}(\psi) =1$.  An example of such an argument applied to graphs is the proof of Lemma 2.4.3 of \cite{Dave}.  The proof in our case is only slightly more complicated, so we omit it.  We also point out that this fact (that for all $\psi\in T_2$, $\mu^{C_r}(\psi)=1$) follows directly from a much more general result, Theorem 3.15 of \cite{Koponen}.  Because this theorem is much more powerful than what our example requires, we leave it to the interested reader to verify it applies to $C_r$ and $\psi \in T_2$.
\qed
\bigskip

We end this section by showing that while there is a Fra\"{i}ss\'{e} limit naturally associated to $M_r$, its theory is very different from the almost sure theory we obtain from $M_r$.  Let $\mathcal{M}_r$ be the class of finite metric spaces obtained by closing $M_r$ under isomorphism, that is, $\mathcal{M}_r$ is the class of all finite metric spaces with distances all in $[r]$.  It is well known that $\mathcal{M}_r$ is a Fra\"{i}ss\'{e} class.  For instance, this is a simple case of general results contained in \cite{DLPS}, which tell us when, given $S\subseteq \mathbb{R}$, the class of finite metric spaces with distances all in $S$ forms a Fra\"{i}ss\'{e} class. For completeness we verify the amalgamation property for our case, that is, when $S=[r]$.

Suppose $A, B, C\in \mathcal{M}_r$ and $f:C\rightarrow A$, $g:C\rightarrow B$ are isometries.  Without loss of generality, assume that $f$ and $g$ are inclusion maps and $V(A)\cap V(B) = V(C)$.  To verify the amalgamation property, we want to find $D\in \mathcal{M}_r$ and isometries $h:A\rightarrow D$ and $s:B\rightarrow D$ such that for all $c\in V(C)$, $s(c)=h(c)$.  Given $s,t\in [r]$, let $t\dotplus s= \min\{ r, t+s\}$. Set $V(D)=V(A)\cup V(B)$ and for $xy\in {V(D)\choose 2}$, set
\begin{eqnarray} \label{APM_r}
d^D(x,y) = \begin{cases}
d^A(x,y) & \textnormal{ if } xy\in E(A),\\
d^B(x,y) & \textnormal{ if } xy \in E(B)\setminus E(A),\\
\max\{ d^A(x,c)\dotplus d^B(c,y): c\in V(C)\} & \textnormal{ if } x\in (V(A)\setminus V(C)), y\in (V(B)\setminus V(C)).
\end{cases} 
\end{eqnarray}
We leave it to the reader to verify that the assigned distances do not violate the triangle inequality, and therefore, that $D$ is in $\mathcal{M}_r$.  Define $h:A\rightarrow D$ and $s:B\rightarrow D$ to be the inclusion maps.  Then for all $c\in V(C)$, $h(c)=s(c)=c$, as desired, and  $\mathcal{M}_r$ has the amalgamation property.  Note that unlike in the proof of the amalgamation property for $\mathcal{C}_r$, the distance in the third line of (\ref{APM_r}) must be chosen carefully, as there are many forbidden configurations in $\mathcal{M}_r$.

Let $FL(\mathcal{M}_r)$ be the Fra\"{i}ss\'{e} limit of $\mathcal{M}_r$.  It is a standard exercise that the theory of $FL(\mathcal{M}_r)$ is axiomatized by the axioms for an infinite metric space with distances all in $[r]$ and the collection of all extension axioms of the form $\sigma_{A'/A}$ for some $A\in M_r(k)$, $A'\in M_r(k+1)$ with $A\prec A'$, and $k\geq 0$.  We can see now that $\Th(FL(\mathcal{M}_r))$ and $\Th(FL(\mathcal{C}_r))$ are different.  For instance, let $\psi$ be the sentence 
$$
\exists x\exists y R_1(x,y).
$$
Then $\psi \in Th(FL(\mathcal{M}_r))$, while clearly $\Th(FL(\mathcal{C}_r))\models \neg \psi$.  Model theoretically, $Th(FL(\mathcal{C}_r))$ is simple (in the sense of Definition 7.2.1 in \cite{TZ}).  This can be seen by adapting the argument used to prove the theory of the random graph is simple, as $\mathcal{C}_r$ is just an edge-colored version of the random graph (see Corollary 7.3.14 in \cite{TZ} for a proof that the theory of the random graph is simple).  On the other hand, a straightforward adjustment of the construction in Theorem 5.5(b) of \cite{CT} shows that $Th(FL(\mathcal{M}_r))$ has \emph{the $r$-strong order property} ($\SOP_r$), a measure of the complexity of a first-order theory defined in \cite{Sh}.  It is shown in \cite{Sh} that for all $n\geq 3$, a theory with $\SOP_n$ is not simple.  In sum, when $r\geq 4$ is even, we have a family of labeled finite structures, $M_r$, associated to two theories which differ in model theoretic complexity: 
\begin{enumerate}[$\bullet$]
\item $\Th(FL(\mathcal{M}_r))$ where $\mathcal{M}_r$ is obtained by closing $M_r$ under isomorphism.  This theory has $\SOP_r$ (and therefore is not simple).
\item $T_{as}^{M_r}=T_{as}^{C_r}=\Th(FL(\mathcal{C}))$, where $C_r\subseteq M_r$ is a special subfamily, and $\mathcal{C}_r$ is obtained by closing $C_r$ under isomorphism.  This theory is simple.
\end{enumerate}

\section{Asymptotic Enumeration}\label{enumsection}

\setcounter{theorem}{0}
\numberwithin{theorem}{section}

In this section we assume Theorem \ref{deltaclosethm} and prove Corollary \ref{oddcor}, which asymptotically enumerates $M_r(n)$ for all $r\geq 3$.  Recall that for all integers $r\geq 3$, $m(r)=\lceil \frac{r+1}{2} \rceil$.  
\bigskip

\noindent{\bf Proof of Corollary \ref{oddcor}}.  
Fix an integer $r\geq 3$.  All logs will be base $m(r)$ unless otherwise stated.  Remark \ref{lowerbound} implies that $|M_r(n)|\geq m(r)^{{n\choose 2}}$, so it suffices to show that for all $0<\gamma<1$, there is $M$ such that $n>M$ implies $
|M_r(n)|<m(r)^{{n\choose 2}+\gamma n^2}$.

Fix $0<\gamma<1$.  Let $H(x)=-x\log_2 x - (1-x)\log_2 (1-x)$ and recall that $H(x)\rightarrow 0$ as $x\rightarrow 0$ and ${n\choose xn}\leq 2^{H(x)n}$ for all $n\in \mathbb{N}$ and $0<x\leq \frac{1}{2}$.  Choose $\delta>0$ small enough so that
$$
(H(\delta)+\delta)\log2 +\delta \log r <\frac{\gamma}{4}.
$$
Theorem \ref{deltaclosethm} implies there exists a $\beta=\beta(\delta)>0$ and $M_1=M_1(\delta)$ such that $n>M_1$ implies
$$
|M_r(n)\setminus C^{\delta}_r(n)|\leq 2^{-\beta n^2}m(r)^{{n\choose 2}}.
$$
Choose $M>M_1$ large enough so that $n>M$ implies $\frac{\gamma}{4} n^2+n\log n < \frac{\gamma}{2} n^2$ and $\frac{\gamma}{2} n^2 +\log 2\leq \gamma n^2$.  We now assume $n>M$ and bound the size of $C^{\delta}_r(n)$.  All elements $G\in C^{\delta}_r(n)$ can be constructed as follows:
\begin{enumerate}[$\bullet$]
\item Choose an element of $G'\in C_r(n)$.  There are $|C_r(n)|$ ways to do this.  If $r$ is even, then $|C_r(n)|=m(r)^{n\choose 2}$.  If $r$ is odd, we must find an upper bound for $|C_r(n)|$.  When $r$ is odd, we can construct any element of $C_r(n)$ by first choosing a partition of $[n]$, then assigning a color to each edge in a way compatible with the partition.  There are at most $n^n m(r)^{{n\choose 2}}$ ways to do this.  
\item Choose at most $\delta n^2$ edges to be in $\Delta(G,G')$.  There are at most ${n^2 \choose \delta n^2}2^{\delta n^2}\leq 2^{(H(\delta)+\delta)n^2}$ ways to do this.
\item Assign a color to each edge in $\Delta(G,G')$.  There are at most $r^{\delta n^2}$ ways to do this.
\end{enumerate}
Thus
$$
|C^{\delta}_r(n)|\leq n^n m(r)^{{n\choose 2}}2^{(H(\delta)+\delta)n^2}r^{\delta n^2}= m(r)^{{n\choose 2}+ n^2((H(\delta)+\delta)\log2 +\delta \log r)+n\log n}.
$$
By our assumptions on $\delta$ and $M$, this is at most $m(r)^{{n\choose 2} + \frac{\gamma}{4} n^2+n\log n} < m(r)^{{n\choose 2} +\frac{\gamma}{2}n^2}$.
Therefore, since $M_r(n) = (M_r(n)\setminus C_r^{\delta}(n))\cup C^{\delta}_r(n)$ we have
$$
|M_r(n)| \leq m(r)^{{n\choose 2} -n^2\beta \log 2} + m(r)^{{n\choose 2}+\frac{\gamma}{2}n^2}\leq 2m(r)^{{n\choose 2}+\frac{\gamma}{2}n^2} =m(r)^{{n\choose 2}+\frac{\gamma}{2}n^2+\log 2} \leq m(r)^{{n\choose 2}+\gamma n^2},
$$
where the last inequality is by the choice of $M$.
\qed

\section{Stability Theorem}\label{stabilitysection}

\setcounter{theorem}{0}
\numberwithin{theorem}{section}

In this section we prove a stability theorem which implies that for all integers $r\geq 3$, for large enough $n$, if $G\in M_r(n)$ has close to the maximal number of different distances occurring between its vertices, then it is structurally close to an element of $C_r(n)$.  This is a crucial step in the proofs of Theorems \ref{aathm} and \ref{deltaclosethm}.  Before proceeding further, we require some definitions and notation.

\subsection{Regularity Lemmas and Preliminaries}
In this section we state a version of Szemer\'{e}di's Regularity Lemma which applies to $r$-graphs.  We will also prove easy consequences of this for our situation.  

\begin{definition} Let $r\geq 3$ be an integer. Fix a finite $r$-graph $G$ and disjoint subsets $X,Y\subseteq V(G)$.
\begin{enumerate}
\item Suppose $\mathcal{A}=\{A_1,\ldots, A_m\}$ is a partition of $V(G)$.  $\mathcal{A}$ is an \emph{equipartition} if $||A_i|-|A_j||\leq 1$ for all $i\neq j$, and the \emph{order} of $\mathcal{A}$ is $m$.  A \emph{refinement} of $\mathcal{A}$ is a partition $\mathcal{B}=\{B_1,\ldots, B_k\}$ such that for each $i\in [k]$, there is $j\in [m]$ such that $B_i\subseteq A_j$.
\item For $l \in [r]$, set
\begin{align*}
e^G_{l}(X,Y) &:=|\{ xy \in E(X,Y): l\in c^G(xy)\}|\textnormal{ and }\\
\rho^G_{l}(X,Y) &:= \frac{e_{l}(X,Y)}{|X||Y|}.
\end{align*}
\item The \emph{density vector} of $(X,Y)$ in $G$ is $(\rho^G_1,\ldots, \rho^G_r)$ where $\rho^G_i=\rho^G_i(X,Y)$.
\item $(X,Y)$ is \emph{$\epsilon$-regular for $G$} if for all $X' \subseteq X$ and $Y'\subseteq Y$ with $|X'|\geq \epsilon |X|$ and $|Y'|\geq \epsilon |Y|$, for all $l \in [r]$, 
$$
|\rho^G_{l}(X,Y) - \rho^G_{l}(X',Y')|\leq \epsilon.
$$
\item A partition $\mathcal{B}=\{B_1,\ldots, B_k\}$ of $V(G)$ is called \emph{$\epsilon$-regular for $G$} if it is an equipartition of $V(G)$, and for all but at most $\epsilon k^2$ of the pairs $ij\in {[k]\choose 2}$, $(B_i,B_j)$ is $\epsilon$-regular for $G$.
\end{enumerate}
\end{definition}

We now state the multi-color version of the Szemeredi Regularity Lemma and one of its corollaries we will use in this paper.  Both results appear in \cite{AxMa}.  
\begin{theorem}\label{Szlem} (Regularity Lemma)
Fix an integer $r\geq 2$.  For every $\epsilon > 0$ and positive integer $m$, there is an integer $CM=CM(m,\epsilon)$ such that if $G$ is a finite $r$-graph with at least $CM$ vertices, and $\mathcal{A}$ is an equipartition of $G$ of order $m$, then there $k$ such that $m\leq k\leq CM$ and a refinement $\mathcal{B}$ of $\mathcal{A}$ of order $k$ which is $\epsilon$-regular for $G$.
\end{theorem}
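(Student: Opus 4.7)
The plan is to adapt the classical energy-increment (mean-square density) proof of Szemer\'{e}di's Regularity Lemma to the multi-color setting. For an equipartition $\mathcal{P}=\{P_1,\ldots,P_k\}$ of $V(G)$, I would define the \emph{index}
$$
q(\mathcal{P}) = \frac{1}{k^2}\sum_{1\le i<j\le k}\sum_{l=1}^r \rho^G_l(P_i,P_j)^2.
$$
Since each $\rho^G_l(P_i,P_j)\in[0,1]$ and the inner sum over $l$ is at most $r$, one has $q(\mathcal{P})\le r/2$ for every equipartition, so $q$ is a bounded monotone potential function.

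First I would establish two standard lemmas. (i) If $\mathcal{P}'$ refines $\mathcal{P}$, then $q(\mathcal{P}')\ge q(\mathcal{P}) - o(1)$, by applying Cauchy--Schwarz separately to each color class, with a small error coming from the exceptional vertices needed to keep parts of equal size. (ii) The density increment: if $(P_i,P_j)$ fails $\epsilon$-regularity, then there exist $X\subseteq P_i$, $Y\subseteq P_j$ with $|X|\ge \epsilon|P_i|$, $|Y|\ge \epsilon|P_j|$, and some color $l\in[r]$ for which $|\rho^G_l(X,Y)-\rho^G_l(P_i,P_j)|>\epsilon$. Subdividing $P_i$ via $\{X,P_i\setminus X\}$ and $P_j$ via $\{Y,P_j\setminus Y\}$ then raises the $(i,j)$ contribution to $q$ by at least $\epsilon^4$, via the defect form of Cauchy--Schwarz applied to color $l$ alone.

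Next I would run the usual iterative refinement scheme. Start with the given equipartition $\mathcal{A}$ of order $m$. At each stage, let $\mathcal{P}$ be the current equipartition, of order $k$. If $\mathcal{P}$ is not $\epsilon$-regular, then more than $\epsilon k^2$ pairs $(P_i,P_j)$ fail regularity, each contributing a witness $(X_{ij},Y_{ij})$. I would pass to the common refinement determined by all the $X_{ij}$'s (an arbitrary partition of order at most $k\cdot 2^k$, which still refines $\mathcal{A}$ because each step only subdivides existing parts), and then re-equipartition by chopping each part of this refinement into atoms of a common size $\lfloor |V(G)|/K\rfloor$, sweeping leftover vertices into a single exceptional class of size at most $\epsilon|V(G)|$. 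By lemma (ii) this refinement gains at least $\epsilon^5/2$ in index, and by lemma (i) the equipartitioning step costs only $o(1)$. Since $q$ lies in $[0,r/2]$, the process terminates after at most $O(r\epsilon^{-5})$ rounds; iterating the order bound gives a tower bound $CM(m,\epsilon)$ on the final order, which is the required constant.

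The main obstacle is the bookkeeping required to ensure that the refinement stays an \emph{equipartition} that refines the original $\mathcal{A}$, and that the graph is large enough for the equitable chopping step to absorb the rounding errors without destroying the index gain. This is handled exactly as in the single-color proof. The multi-color aspect itself is genuinely innocuous: summing over $r$ colors only multiplies the potential's range by $r$ (so adds a harmless $\log r$ factor to the tower height), and the defect Cauchy--Schwarz step works color-by-color, since any irregular pair is irregular for at least one specific color $l$, and the other colors contribute nonnegatively to the index gain.
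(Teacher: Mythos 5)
The paper does not prove this theorem; both the Regularity Lemma and the Embedding Lemma are quoted from \cite{AxMa}, so there is no in-paper argument to compare against. Your energy-increment sketch is the standard proof of the multi-color version, and you correctly isolate the one point where color enters: an irregular pair witnesses failure of regularity for at least one specific color $l$, so the defect Cauchy--Schwarz step can be carried out for that color alone while the remaining colors contribute nonnegatively to the index under refinement; the only cost is that the potential lives in $[0,r/2]$ rather than $[0,1/2]$, lengthening the iteration by a factor $r$. One small remark worth keeping in mind for the present paper: here an $r$-graph is a map $c:\binom{V}{2}\to 2^{[r]}$, so the color densities $\rho_1,\ldots,\rho_r$ on a pair need not sum to $1$; your argument does not rely on that (it only uses $\rho_l\in[0,1]$ for each $l$), so the sketch goes through unchanged, and the bookkeeping with the exceptional class and re-equipartitioning is exactly as in the single-color case.
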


\begin{theorem} (Embedding Lemma) \label {EL}
Fix an integer $r\geq 2$.  For every $0<d<1$ and $k\in \mathbb{N}\setminus\{0\}$, there is $\gamma= \gamma_{el}(d, k)\leq d$ and $\delta= \delta_{el}(d, k)$ such that the following holds.  Suppose that $H$ and $G$ are $r$-graphs and $V(H)=\{v_1,\ldots, v_k\}$.  Suppose $V_1,\ldots, V_k$ are pairwise disjoint subsets of $V(G)$ such that for every $ij\in {[k]\choose 2}$, $(V_i, V_j)$ is $\gamma$-regular for $G$, and for each $l\in [r]$, $l \in c^H(v_iv_j)$ implies $\rho^G_{l}(V_i, V_j)\geq d$.   Then there are at least $\delta \prod_{i=1}^k |V_i|$ $k$-tuples $(w_1,\ldots, w_k)\in V_1 \times \cdots \times V_k$ such that for each $ij\in {[k]\choose 2}$, $c^H(v_iv_j)\subseteq c^G(w_iw_j)$.
\end{theorem}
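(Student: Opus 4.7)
The plan is to prove the embedding lemma via a standard greedy vertex-selection argument. I would construct the tuples $(w_1, \ldots, w_k)$ by choosing $w_s \in V_s$ iteratively for $s = 1, \ldots, k$, maintaining at each step refined candidate subsets $V_i^{(s)} \subseteq V_i$ (for $i > s$) consisting of vertices whose edges to the already-chosen $w_1, \ldots, w_s$ carry all the colors required by $H$.

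First, I would choose $\gamma = \gamma_{el}(d,k)$ much smaller than $d$, $1/k$, and $1/r$, and $\delta = \delta_{el}(d,k)$ comparable to $2^{-k}(d-2\gamma)^{k^2 r}$. The key invariant to maintain is $|V_i^{(s)}| \geq (d-2\gamma)^{sr}|V_i|$, which in particular keeps $|V_i^{(s)}|/|V_i| \geq \gamma$, so that the $\gamma$-regularity of each original pair $(V_j, V_i)$ continues to apply on the refined subsets. At step $s+1$, for each $i > s+1$ and each color $l \in c^H(v_{s+1}v_i)$, applying $\gamma$-regularity to the pair $(V_{s+1}, V_i^{(s)})$ would give $\rho_l^G(V_{s+1}, V_i^{(s)}) \geq d - \gamma$. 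A standard counting argument then shows that fewer than $\gamma|V_{s+1}|$ vertices $w \in V_{s+1}$ have color-$l$-neighborhood in $V_i^{(s)}$ of size below $(d-2\gamma)|V_i^{(s)}|$ (else the restriction to this bad set would violate regularity). Summing these bad counts over the at most $kr$ pairs $(i,l)$ leaves a positive fraction of good vertices in $V_{s+1}^{(s)}$; selecting any such vertex as $w_{s+1}$ and setting $V_i^{(s+1)} := \{v \in V_i^{(s)} : c^H(v_{s+1}v_i) \subseteq c^G(w_{s+1}, v)\}$ preserves the invariant.

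The hard part will be controlling the intersection of color-neighborhoods when some $c^H(v_iv_j)$ contains more than one color: the individual density bounds $\rho_l^G(V_i, V_j) \geq d$ do not by themselves imply that the intersection over $l \in c^H(v_iv_j)$ of color-$l$-neighborhoods of a fixed vertex is large. In the applications of this paper $H$ will typically be simple (each $c^H(v_iv_j)$ is a singleton), so this issue is vacuous and each greedy step loses only a single factor of $(d-2\gamma)$. In the general case, one restricts color-by-color, paying a factor $(d-2\gamma)$ per color, which is precisely why we build the factor $r$ into the exponent of the invariant. Multiplying the number of good choices at each step $s$ by the invariant lower bound on $|V_s^{(s-1)}|$ then yields at least $\delta \prod_{i=1}^k |V_i|$ valid tuples, as required.
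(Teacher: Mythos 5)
The paper does not actually prove this lemma: both the multi-color regularity lemma and the embedding lemma are cited from \cite{AxMa} without proof, so there is no in-paper argument to compare against. Your greedy one-vertex-at-a-time construction is the standard route, and it is correct for \emph{simple} $H$, which is all the paper ever needs (in Lemma~\ref{elcor} the embedded $H$ is a violating triangle with singleton edge colors, so at each greedy step you lose one factor of $(d-2\gamma)$, and a constant $\delta$ of the form roughly $2^{-k}(d-2\gamma)^{\binom{k}{2}}$ suffices).

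However, your handling of the multi-color case is not correct, and ``restricting color by color'' does not patch it. Suppose $c^H(v_{s+1}v_i)=\{l_1,l_2\}$. You need to choose $w_{s+1}$ so that the set of $v\in V_i^{(s)}$ with \emph{both} $l_1\in c^G(w_{s+1}v)$ and $l_2\in c^G(w_{s+1}v)$ is a constant fraction of $V_i^{(s)}$. Regularity guarantees that each individual color-$l_j$-neighborhood of a typical $w_{s+1}$ is large, but gives no lower bound on their intersection, and once $w_{s+1}$ is fixed you cannot apply regularity again to the pair $(\{w_{s+1}\},N_{l_1})$ because one side is a single vertex. In fact the statement of the lemma as written appears to fail for multi-color $H$: take $G$ bipartite on $V_1\cup V_2$ with each cross-edge independently given color $\{1\}$ or $\{2\}$ with probability $1/2$ each; then $(V_1,V_2)$ is $\gamma$-regular with $\rho_1\approx\rho_2\approx 1/2\geq d$ for $d$ slightly below $1/2$, yet no edge of $G$ carries both colors, so for the one-edge $H$ with $c^H(v_1v_2)=\{1,2\}$ there are zero embeddings. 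A correct general statement must either require $H$ to be simple (as it is in every application in this paper) or replace the per-color density hypothesis by a hypothesis on the density of edges of $G$ that carry \emph{all} colors of $c^H(v_iv_j)$ simultaneously.
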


We will apply these theorems to what are called \emph{reduced $r$-graphs}, which we define below.  Recall that a metric $r$-graph is an $r$-graph with no violating triangles.
\begin{definition}
Let $r\geq 2$ be an integer, $G$ a finite $r$-graph, and $0<\eta \leq d\leq 1$.
\begin{enumerate}
\item Suppose $\mathcal{P}=\{V_1,\ldots, V_t\}$ is an $\eta$-regular partition for $G$.  Let $R(G,\mathcal{P}, d)$ be the $r$-graph $R$ with vertex set $[t]$ such that $s\in c^R(ij)$ if and only if $(V_i,V_j)$ is $\eta$-regular for $G$ and $\rho_s(V_i,V_j)\geq d$.  We say $R$ is a \emph{reduced $r$-graph} obtained from $G$ with parameters $\eta$ and $d$.
\item Let $\tilde{M}_r(t)$ be the set of metric $r$-graphs on $[t]$ and set
\begin{align*}
Q_{\eta,d,t}(G) &= \{ R(G,\mathcal{P},d) : \mathcal{P}\textnormal{ is an }\eta\textnormal{-regular equipartition for } G\textnormal{ and }\mathcal{P}\textnormal{ has order }t\},\textnormal{ and }\\
Q_{\eta, d}(G) &= \bigcup_{t=\frac{1}{\eta}}^{CM(\frac{1}{\eta}, \eta)} Q_{\eta,d,t}(G).
\end{align*}
\end{enumerate}
\end{definition}

We emphasize that the difference between $\tilde{M}_r(t)$ and $M_r(t)$ is that $r$-graphs in $\tilde{M}_r(t)$ need not be simple and need not be complete.  The following two lemmas will be needed.
\begin{lemma}\label{elcor}
Let $r\geq2$ be an integer, $0<d<1$, $0<\gamma \leq \gamma_{el}(d, 3)$, and $\delta\leq \delta_{el}(d,3)$.  Let $(i,j,k)\in [r]^3$ be a violating triple.  Suppose $G\in M_r(n)$ and $V_1, V_2, V_3\subseteq V(G)$ are pairwise disjoint and $\gamma$-regular for $G$ with $\delta|V_1||V_2||V_3| \geq 1$.  If $\{X,Y,Z\}=\{V_1,V_2,V_3\}$, then 
\begin{equation} \label{min}\min \{\rho^G_i(X,Y), \rho^G_j(Y,Z), \rho^G_k(X,Z)\} <d.
\end{equation}
\end{lemma}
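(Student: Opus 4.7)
The plan is to argue by contradiction: assume all three densities in (\ref{min}) are at least $d$, and use the Embedding Lemma to produce a violating triangle in $G$, contradicting the fact that $G \in M_r(n)$ is metric.

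First I would fix an ordering $(X,Y,Z) = (V_{\sigma(1)}, V_{\sigma(2)}, V_{\sigma(3)})$ and define an auxiliary $r$-graph $H$ on three vertices $\{v_1, v_2, v_3\}$ by setting
$$
c^H(v_1 v_2) = \{i\}, \quad c^H(v_2 v_3) = \{j\}, \quad c^H(v_1 v_3) = \{k\}.
$$
Since $(i,j,k)$ is a violating triple, $H$ is itself a violating triangle.

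Next I would apply Theorem \ref{EL} with this $H$, the $r$-graph $G$, and the sets $V_1' := X$, $V_2' := Y$, $V_3' := Z$. All hypotheses of Theorem \ref{EL} can be checked directly: the $V_s'$ are pairwise disjoint subsets of $V(G)$ by assumption; the pairs $(V_s', V_t')$ are $\gamma$-regular for $G$ with $\gamma \leq \gamma_{el}(d,3)$; and under the contradiction hypothesis $\min\{\rho^G_i(X,Y), \rho^G_j(Y,Z), \rho^G_k(X,Z)\} \geq d$, the density requirement of the lemma is satisfied for every edge of $H$ (each edge carries exactly one color, and the corresponding density is at least $d$).

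The conclusion of Theorem \ref{EL}, combined with $\delta \leq \delta_{el}(d,3)$, then yields at least $\delta |X||Y||Z| \geq 1$ triples $(w_1, w_2, w_3) \in X \times Y \times Z$ such that $c^H(v_s v_t) \subseteq c^G(w_s w_t)$ for every $st \in \binom{[3]}{2}$. In particular, there exists at least one such triple, and for it we have $i \in c^G(w_1 w_2)$, $j \in c^G(w_2 w_3)$, and $k \in c^G(w_1 w_3)$. Since $(i,j,k)$ is a violating triple, this exhibits a violating triangle in $G$, contradicting $G \in M_r(n)$. The only subtlety is bookkeeping the correspondence between the ordering $(X,Y,Z)$ and the labeling of the vertices of $H$, so that the assumed lower bound on the densities matches the colors assigned to the edges of $H$; I do not expect any genuine obstacle beyond this.
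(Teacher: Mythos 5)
Your proposal is correct and follows the same route as the paper: both argue by contradiction, construct (implicitly or explicitly) the three-vertex violating $r$-graph $H$, and apply the Embedding Lemma (Theorem \ref{EL}) together with $\delta|V_1||V_2||V_3|\geq 1$ to produce a violating triangle in $G$. Your write-up is just a more explicit version of the argument the paper gives in two lines.
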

\begin{proof}
Suppose for contradiction that $\{X,Y,Z\}=\{V_1,V_2,V_3\}$ and (\ref{min}) fails.  By Theorem \ref{EL} there exists at least $\delta |V_1||V_2||V_3|\geq 1$ tuples $(x,y,z)\in X\times Y\times Z$ such that $i\in c^G(xy)$, $j\in c^G(yz)$ and $k\in c^G(xz)$.  But now $\{x,y,z\}$ is a violating triangle in $G$, a contradiction.
\end{proof}

\begin{lemma}\label{Qnonempty}
Let $0<d<1$ and $0<\eta\leq \gamma_{el}(d,3)$.  There is an $M$ such that $n>M$ implies that for all $G\in M_r(n)$, $\emptyset \neq Q_{\eta,d}(G) \subseteq \bigcup_{t=\frac{1}{\eta}}^{CM(\frac{1}{\eta}, \eta)}\tilde{M}_r(t)$.  In other words, any reduced $r$-graph obtained from $G$ with parameters $d$ and $\eta$ omits all violating triangles.
\end{lemma}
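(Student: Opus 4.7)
The plan is to handle the two conclusions separately: first that $Q_{\eta,d}(G)$ is nonempty, then that every reduced $r$-graph produced this way is metric.

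For nonemptiness, I would simply invoke the multicolor regularity lemma (Theorem~\ref{Szlem}) with parameters $m=\lceil 1/\eta\rceil$ and $\epsilon=\eta$. Starting from the trivial equipartition of $V(G)$ into $m$ parts (which requires $n\geq m$, and is otherwise a minor matter of adjusting $M$ upward), this yields an $\eta$-regular equipartition $\mathcal{P}$ of some order $t$ with $1/\eta \leq t \leq CM(1/\eta,\eta)$. The reduced $r$-graph $R(G,\mathcal{P},d)$ is then by definition an element of $Q_{\eta,d,t}(G)\subseteq Q_{\eta,d}(G)$, so $Q_{\eta,d}(G)\neq\emptyset$.

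For the inclusion $Q_{\eta,d}(G)\subseteq \bigcup_{t=1/\eta}^{CM(1/\eta,\eta)}\tilde{M}_r(t)$, the only thing to check is that no reduced $r$-graph $R=R(G,\mathcal{P},d)$ in $Q_{\eta,d}(G)$ contains a violating triangle. This is exactly what Lemma~\ref{elcor} is built for. Fix such an $R$ with $\mathcal{P}=\{V_1,\ldots,V_t\}$, and suppose for contradiction that there exist distinct $a,b,c\in[t]$ and a violating triple $(i,j,k)$ with $i\in c^R(ab)$, $j\in c^R(bc)$, $k\in c^R(ac)$. By definition of the reduced graph this forces $(V_a,V_b)$, $(V_b,V_c)$, $(V_a,V_c)$ to all be $\eta$-regular for $G$, and
\[
\rho^G_i(V_a,V_b)\geq d,\quad \rho^G_j(V_b,V_c)\geq d,\quad \rho^G_k(V_a,V_c)\geq d.
\]
Since $\eta\leq \gamma_{el}(d,3)$, Lemma~\ref{elcor} applies provided $\delta_{el}(d,3)|V_a||V_b||V_c|\geq 1$, and it rules out precisely this configuration. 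This contradicts $G\in M_r(n)$ being a metric $r$-graph, so no violating triangle occurs in $R$ and $R\in \tilde{M}_r(t)$.

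The only nontrivial bookkeeping is choosing $M$ large enough for the size condition $\delta_{el}(d,3)|V_a||V_b||V_c|\geq 1$ to hold uniformly. Since $\mathcal{P}$ is an equipartition of order at most $T:=CM(1/\eta,\eta)$, each part has size at least $\lfloor n/T\rfloor$. Thus it suffices to take $M$ so that $n\geq M$ implies $\delta_{el}(d,3)\lfloor n/T\rfloor^3\geq 1$; explicitly, $M=\lceil T\,\delta_{el}(d,3)^{-1/3}\rceil+T$ works. This is the only place any real estimate is needed, and it is a routine threshold computation rather than the main obstacle.
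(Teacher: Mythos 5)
Your proposal is correct and follows essentially the same route as the paper: apply the multicolor regularity lemma with $m=\lceil 1/\eta\rceil$, $\epsilon=\eta$ to get nonemptiness, then apply Lemma~\ref{elcor} (after choosing $M$ large enough that the part sizes guarantee $\delta_{el}(d,3)|V_a||V_b||V_c|\geq 1$) to rule out violating triangles in any reduced $r$-graph. The only difference is cosmetic (a slightly different explicit choice of $M$ and a more explicit unfolding of why the definition of $R(G,\mathcal{P},d)$ forces all three pairs to be $\eta$-regular with densities $\geq d$).
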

\begin{proof}
Let $M= \frac{2CM(\frac{1}{\eta},\eta)}{\delta_{el}(d,3)^{\frac{1}{3}}}$.  Suppose $n>M$ and $G\in M_r(n)$.  As $n>CM(\frac{1}{\eta},\eta)$, there is $t$ with $\frac{1}{\eta}\leq t\leq CM(\frac{1}{\eta},\eta)$ and $\mathcal{P}=\{V_1,\ldots, V_t\}$ an $\eta$-regular partition for $G$.  Therefore $Q_{\eta,d,t}(G)\neq \emptyset$, so $Q_{\eta,d}(G)\neq \emptyset$.  Let $R=R(G,\mathcal{P},d)\in Q_{\eta,d,t}(G)$.  We will show that $R\in \tilde{M}_r(t)$.  Note that for all $V_i,V_j,V_k \in \mathcal{P}$, 
$$
\delta_{el}(d,3)|V_i||V_j||V_k| \geq \delta_{el}(d,3)\Bigg(\frac{n}{t}-1\Bigg)^3 >\delta_{el}(d,3)\Bigg(\frac{n}{2t}\Bigg)^3 \geq \delta_{el}(d,3)\frac{n^3}{8CM(\frac{1}{\eta},\eta)^3}\geq1,
$$
by assumption on $M$.  Thus by Lemma \ref{elcor}, $R$ contains no violating triangle, so $R\in \tilde{M}_r(t)$.
\end{proof}

We spend the rest of this section stating various definitions and facts we will need for our proofs.  We will work with the following subset $\tilde{C}_r(n)\subseteq \tilde{M}_r(n)$ which is an analogue of $C_r(n)\subseteq M_r(n)$.

\begin{definition} Let $r\geq 3$ be an integer.  Set $\tilde{C}_r(t)$ to be the the set of complete $r$-graphs $R$ with $V(R)=[t]$ such that
\begin{enumerate}[(i)]
\item if $r$ is even, then for all $xy\in E(R)$, $c^R(xy)=[\frac{r}{2}, r]$.
\item if $r$ is odd, then there is a partition $[t]=V_1\cup \ldots\cup  V_s$ such that for all $xy\in {[t]\choose 2}$, 
\[
c^R(xy) = \begin{cases}
[\frac{r-1}{2},r-1] & \text{if } xy\in {V_i\choose 2}\textnormal{ for some }i\in [s] \\
[\frac{r+1}{2}, r] & \text{if } xy\in E(V_i,V_j)\textnormal{ for some }i\neq j\in [s].
\end{cases}
\] 
\end{enumerate}
\end{definition}
Note that elements of $\tilde{C}_r(t)$ contain no violating triangles, so $\tilde{C}_r(t)\subseteq \tilde{M}_r(t)$.  The following weight function defined on metric $r$-graphs is crucial to our proof.

\begin{definition} Let $t\geq 2$ and $r\geq 3$ be integers and let $R\in \tilde{M}_r(t)$.  For $ij \in {[t]\choose 2}$, set 
$$
f^R(i,j) = \max\{ |c^R(ij)|,1\} \qquad \textnormal{ and }\qquad W(R) = \prod_{ij\in {[t]\choose 2}} f^R(i,j).
$$
\end{definition}
\noindent Note that for integers $r,t\geq 3$, any $r$-graph $R$ with $t$ vertices has $W(R)\leq r^{t\choose 2}$.  Recall that when $r$ is even $m(r) = |[\frac{r}{2}, r]|$ and when $r$ is odd, $m(r) = |[\frac{r-1}{2}, r-1]|=|[\frac{r+1}{2},r]|$, so for any integers $r,t\geq 3$, for all $R\in \tilde{C}_r(t)$ and $ij\in {[t]\choose 2}$, $f^R(i,j) = m(r)$, and thus $W(R)=m(r)^{{t\choose 2}}$.  

We now state a lemma which restricts how many colors we can assign to the edges of a triangle $\{i,j,k\}$ in an $r$-graph without creating a violating triangle.  The proof of this lemma is elementary but somewhat tedious, and for this reason is relegated to the Appendix.  

\begin{lemma}\label{sizelemma}
Fix an integer $r\geq 3$.  Let $A$, $B$, and $C$ be nonempty subsets of $[r]$ such that $|A|\geq |B|\geq |C|$, $|A|>m(r)$, and $|B|\geq m(r)$.  Set $x=|A|-m(r)$ and $y=|B|-m(r)$, and suppose
\[
|C|\geq 
\begin{cases}
\max \{m(r)-x-y,1\} & \text{if } r \text{ is even} \\
\max \{m(r)-x-y+2,1\} & \text{if } r \text{ is odd} .
\end{cases}
\]
Then there is a violating triple $(a,b,c)\in A\times B\times C$. 
\end{lemma}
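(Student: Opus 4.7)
The plan is to prove the contrapositive: assuming no violating triple lies in $A \times B \times C$, I will bound $|C|$ strictly below the threshold in the hypothesis. The absence of a violating triple is equivalent to the triangle inequality $|a-b| \leq c \leq a+b$ holding for every $(a,b,c) \in A \times B \times C$. Writing $a_0 = \min A$, $a_1 = \max A$, and similarly defining $b_0, b_1, c_0, c_1$, the tightest instances of these inequalities over all choices of $a, b, c$ yield
\begin{equation*}
c_1 \leq a_0 + b_0, \qquad a_1 \leq b_0 + c_0, \qquad b_1 \leq a_0 + c_0.
\end{equation*}

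From the last two inequalities, combined with $a_1 - a_0 \geq |A| - 1$ and $b_1 - b_0 \geq |B| - 1$, I obtain $c_0 \geq (|A|+|B|)/2 - 1$, while the first inequality gives $|C| \leq c_1 - c_0 + 1 \leq a_0 + b_0 - c_0 + 1$. Combining with the trivial bounds $a_0 \leq r - |A| + 1$ and $b_0 \leq r - |B| + 1$ coming from $A, B \subseteq [r]$ produces
\begin{equation*}
|C| \leq 2r - \tfrac{3}{2}(|A| + |B|) + 4.
\end{equation*}
Substituting $|A| = m(r) + x$ and $|B| = m(r) + y$, and using $r = 2m(r) - 2$ for even $r$ and $r = 2m(r) - 1$ for odd $r$, this simplifies to $|C| \leq m(r) - \tfrac{3}{2}(x+y)$ in the even case and $|C| \leq m(r) - \tfrac{3}{2}(x+y) + 2$ in the odd case.

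Since $x \geq 1$ by hypothesis, so $x + y \geq 1$, and $|C|$ is a positive integer, these bounds force $|C| \leq m(r) - x - y - 1$ in the even case and $|C| \leq m(r) - x - y + 1$ in the odd case, contradicting the hypothesis whenever the maxima in the statement are attained by the nontrivial term. In the remaining boundary regime, where $\max\{m(r) - x - y, 1\} = 1$ (respectively $\max\{m(r) - x - y + 2, 1\} = 1$), the derived upper bound on $|C|$ becomes non-positive, which contradicts $|C| \geq 1$. The main obstacle I anticipate is keeping the parity split and the integrality rounding straight; once the three extremal inequalities from the first paragraph are in place, the rest is essentially bookkeeping.
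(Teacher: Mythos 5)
Your proof is correct, and it takes a genuinely different route from the paper's. The paper proceeds by induction on $r$: the base case $r=3$ is verified by hand, and the inductive step splits on whether $A,B,C$ all lie in $[r-1]$ (reducing to the induction hypothesis via a careful reparameterization of $x,y$) or one of them contains $r$ (in which case a chain of case analyses on $\min$'s and $\max$'s exhibits an explicit violating triple). Your argument instead derives a single closed-form upper bound $|C|\le 2r-\tfrac32(|A|+|B|)+4$ directly from the three extremal triangle inequalities $c_1\le a_0+b_0$, $a_1\le b_0+c_0$, $b_1\le a_0+c_0$ together with the elementary constraints $a_0\le r-|A|+1$, $a_1-a_0\ge |A|-1$, and then checks that this bound plus integrality and $x+y\ge 1$ always falls strictly below the threshold in the hypothesis (or below $1$, yielding the contradiction $C=\emptyset$). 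I verified the substitution using $r=2m(r)-2$ (even) and $r=2m(r)-1$ (odd), and the integrality rounding in the boundary case $x+y=1$; it all checks out. Your approach is shorter, avoids induction entirely, and (incidentally) never needs the ordering $|A|\ge|B|\ge|C|$, so it is slightly more general; the paper's inductive approach has the advantage of reusing the same machinery for its companion Lemma \ref{lemmatriangle}, which characterizes the equality cases, whereas your inequality-based argument would need to be sharpened separately to extract that structural information.
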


A straightforward consequence of this is that  $m(r)$ is the largest size of a metric subset of $[r]$.   Another important consequence is the following.

\begin{corollary}\label{importantcor}
Let $r,t \geq 3$ be integers and let $R\in \tilde{M}_r(t)$.  Suppose $uv, vw, uw \in E(R)$, and $f^R(u,v) \geq f^R(v,w)>m(r)$.  Then $f^R(u,w)<m(r)$ and $\max\{ f^R(u,v)f^R(u,w),f^R(v,w)f^R(u,w)\}\leq m(r)^2-1$.
\end{corollary}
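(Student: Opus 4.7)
The plan is to apply Lemma \ref{sizelemma} to the color sets on the three edges of the triangle $\{u,v,w\}$ and exploit the fact that $R \in \tilde{M}_r(t)$ contains no violating triangle. Throughout, write $A = c^R(u,v)$, $B = c^R(v,w)$, $C = c^R(u,w)$, $x = |A| - m(r)$, and $y = |B| - m(r)$, so that $x, y \geq 1$ by hypothesis.

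For the first claim, I would argue by contradiction. Suppose $f^R(u,w) \geq m(r)$; then $A, B, C$ are all nonempty with $|A|, |B| > m(r)$ and $|C| \geq m(r)$. After reordering so that the largest set plays the role of ``$A$'' in Lemma \ref{sizelemma}, the corresponding differences $x', y'$ are both at least $1$, and $|C'| \geq m(r)$ easily exceeds the threshold $\max\{m(r) - x' - y', 1\}$ in the even case and $\max\{m(r) - x' - y' + 2, 1\}$ in the odd case (the latter because $x' + y' \geq 2$). Lemma \ref{sizelemma} then produces a violating triple, contradicting $R \in \tilde{M}_r(t)$.

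For the second claim, note that since $f^R(u,v) \geq f^R(v,w)$, the maximum in question equals $f^R(u,v) \cdot f^R(u,w)$. If $C = \emptyset$ then $f^R(u,w) = 1$ and the product is just $f^R(u,v) \leq r \leq m(r)^2 - 1$, where the last inequality is elementary from $m(r) = \lceil (r+1)/2 \rceil$ and $r \geq 3$. Otherwise I would apply Lemma \ref{sizelemma} with the ordering $|A| \geq |B| > m(r) > |C|$ supplied by the first part; since $R$ is metric the size threshold must fail, and since $|C| \geq 1$ this forces $|C| \leq m(r) - x - y - 1$ in the even case and $|C| \leq m(r) - x - y + 1$ in the odd case. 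Using $y \geq 1$, the odd-case bound yields $f^R(u,v) f^R(u,w) \leq (m(r) + x)(m(r) - x) = m(r)^2 - x^2 \leq m(r)^2 - 1$, and the even case gives a strictly smaller product.

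The main obstacle is essentially bookkeeping: the two parities of $r$ must be handled separately because Lemma \ref{sizelemma} supplies different thresholds on $|C|$, and the degenerate case $C = \emptyset$ falls outside its hypotheses and must be dealt with by a direct check. Once these are isolated, the key numerical bound collapses to the identity $(m(r)+x)(m(r)-x) = m(r)^2 - x^2$ combined with $x \geq 1$.
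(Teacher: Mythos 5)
Your proof is correct and follows essentially the same strategy as the paper's: reduce both conclusions to applications of Lemma \ref{sizelemma} on the color sets $A=c^R(uv)$, $B=c^R(vw)$, $C=c^R(uw)$, using the failure of its size threshold together with $x,y\geq 1$ to force $|C|<m(r)$, and then the identity $(m(r)+x)(m(r)-x)=m(r)^2-x^2$. The paper first establishes the ordering $|A|\geq|B|\geq|C|$ and then derives $|C|<m(r)$, whereas you prove $f^R(u,w)<m(r)$ directly by contradiction; these are interchangeable. One small point in your favor: you explicitly treat the degenerate case $C=\emptyset$ (where $f^R(u,w)=1$ but $|C|=0$), which the paper's identification $|C|=f^R(u,w)$ quietly elides, and which indeed falls outside the hypotheses of Lemma \ref{sizelemma}.
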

\begin{proof}
For $xy\in {[t]\choose 2}$, set $f(x,y)=f^R(x,y)$.  Given $A,B,C\subseteq [r]$ and $x,y\in [r]$, write $P(A,B,C,x,y)$ if $|A|\geq |B|\geq |C|$, $x=|A|-m(r)$, $y=|B|-m(r)$, $|A|>m(r)$ and $|B|\geq m(r)$.  Set $A=c^R(u,v)$, $B=c^R(v,w)$, $C=c^R(u,w)$, $x=|A|-m(r)$, and $y=|B|-m(r)$.  Note $|A|=f(u,v)$, $|B|=f(v,w)$, $|C|=f(u,w)$, and $|A|\geq |B|$ by assumption.  We show that $|A|\geq |B|\geq |C|$.  Suppose for a contradiction that $|C|>|B|$.  Let $z=|C|-m(r)$ and note our assumptions imply that either $P(A,C,B, x,z)$ or $P(C,A,B, z,x)$ holds.  In either case, $|B|> m(r)\geq m(r)-x-z+2$ implies by Lemma \ref{sizelemma} that there is a violating triple $(a,b,c)\in A\times B\times C$.  Now $\{u,v,w\}$ is a violating triangle in $R$, a contradiction.  Thus $|A|\geq |B|\geq |C|$.

Consequently, $P(A,B,C,x,y)$ holds, so if $|C|\geq m(r)-x-y+2$ were true, Lemma \ref{sizelemma} would imply that there is a violating triple $(a,b,c)\in A\times B\times C$, making $\{u,v,w\}$ a violating triangle in $R$, a contradiction.  Therefore, we must have $|C|<m(r)-x-y+2$.  Our assumptions imply that $x,y\geq 1$, so in fact, $|C|< m(r)$.  Further, we have shown that
$$
|B||C|=f(v,w) f(u,w)\leq (m(r)+y)(m(r)-x-y+1)\leq (m(r)+y)(m(r)-y)= m(r)^2-y^2 \leq m(r)^2-1,
$$
and
$$
|A||C|=f(u,v) f(u,w)\leq (m(r)+x)(m(r)-x-y+1)\leq (m(r)+x)(m(r)-x)= m(r)^2-x^2 \leq m(r)^2-1,
$$
as desired.\end{proof}

\subsection{Two Lemmas}
In this section, we prove two lemmas toward our stability result.  The first lemma bounds the size of $W(R)$ for $R\in \tilde{M}_r(t)$.  We will frequently use the following inequality which holds for all integers $r\geq 3$:
\begin{eqnarray}\label{m(r)^2-1>r}
m(r)^2 -1\geq r.
\end{eqnarray}

\begin{lemma}\label{boundW(R)}
Let $t,r\geq 3$ be integers and $R\in \tilde{M}_r(t)$.  Let $a_R=|\{ij \in E(R): f^R(i,j)>m(r)\}|$.  Then
$$
W(R) \leq m(r)^{{t\choose 2}+t+5} \Bigg( \frac{m(r)^2 -1}{m(r)^2}\Bigg)^{a_R}.
$$
\end{lemma}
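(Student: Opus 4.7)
The plan is to classify the edges of $R$ by the size of $f^R$: call an edge \emph{large} if $f^R(e) > m(r)$, and \emph{small} if $f^R(e) < m(r)$. Set $L = \{e \in E(R) : f^R(e) > m(r)\}$ so that $|L| = a_R$, and let $S$ denote the set of small edges. The primary tool is Corollary \ref{importantcor}: for any two large edges $uv, vw$ sharing a vertex $v$, the third side $uw$ is small and both $f^R(uv) \cdot f^R(uw)$ and $f^R(vw) \cdot f^R(uw)$ are at most $m(r)^2 - 1$. In particular, the graph $\mathcal{L}$ on $[t]$ with edge set $L$ is triangle-free.

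The crux is to produce an injection $\phi \colon L' \to S$ for some $L' \subseteq L$ with $|L \setminus L'| \leq t + 5$, such that for each $e \in L'$, $\phi(e)$ shares a vertex with $e$ and $f^R(e) \cdot f^R(\phi(e)) \leq m(r)^2 - 1$. To build $\phi$ I will process the connected components of $\mathcal{L}$ separately. An isolated edge of $\mathcal{L}$ (a single-edge component) cannot be paired, but there are at most $t/2$ such edges. For a component $C$ with at least two edges, some vertex $v$ has $\mathcal{L}$-degree $d_v \geq 2$, and Corollary \ref{importantcor} ensures that its large neighbors $w_1, \ldots, w_{d_v}$ are connected pairwise by small edges; pairing $v w_i \mapsto w_i w_{i+1}$ for $1 \leq i < d_v$ is injective within $v$'s star. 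Processing vertices greedily along a spanning tree of each component yields a pairing that leaves at most $|V(C)|/2$ edges of $C$ unpaired (with the worst case being a balanced complete bipartite $\mathcal{L}$-component, where large edges outnumber available small third sides by exactly $|V(C)|/2$). Summing over components gives $|L \setminus L'| \leq t$.

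Once $\phi$ is in hand, the bound follows by direct computation, using the pairing on $L'$, the inequality $r \leq m(r)^2 - 1$ from \eqref{m(r)^2-1>r} on unpaired large edges, and the trivial bound $f^R(e) \leq m(r)$ on all remaining edges:
\begin{align*}
W(R) &\leq \prod_{e \in L'}\bigl(f^R(e) f^R(\phi(e))\bigr) \cdot \prod_{e \in L \setminus L'} f^R(e) \cdot \prod_{e \notin L \cup \phi(L')} f^R(e) \\
&\leq (m(r)^2 - 1)^{|L'|} \cdot r^{|L \setminus L'|} \cdot m(r)^{\binom{t}{2} - a_R - |L'|} \\
&\leq (m(r)^2 - 1)^{a_R} \cdot m(r)^{\binom{t}{2} - 2a_R + |L \setminus L'|} \\
&= m(r)^{\binom{t}{2} + |L \setminus L'|} \cdot \left(\frac{m(r)^2 - 1}{m(r)^2}\right)^{a_R}.
\end{align*}
Since $|L \setminus L'| \leq t \leq t + 5$, this yields the desired bound.

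The main obstacle is executing the construction of $\phi$ carefully enough to guarantee $|L \setminus L'| \leq t + 5$ while preserving injectivity. Greedy procedures at different vertices may compete for the same small third-side edge, so the book-keeping needs to assign each small edge to at most one large edge. Exploiting the structural consequence of Corollary \ref{importantcor} --- every length-two path in $\mathcal{L}$ closes to a triangle whose third side lies in $S$ (and is therefore not in $L$, so outside the greedy competition) --- should make this feasible, but the precise argument requires a case analysis on component structure, with the balanced bipartite case as the extremal scenario.
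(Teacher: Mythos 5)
Your global matching strategy is genuinely different in presentation from the paper's argument, and the final computation is correct given the matching: if an injection $\phi\colon L'\to S$ with $|L\setminus L'|\leq t$ exists, the chain of inequalities does yield the claimed bound. The identification of Corollary~\ref{importantcor} and the triangle-freeness of $\mathcal{L}$ as the key structural facts is also right.

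However, the central combinatorial claim --- that such an injection exists with $|L\setminus L'|\leq t$ --- is left unproved, and the sketched ``greedy along a spanning tree'' procedure is too vague to carry it. As you yourself note, distinct vertices $v,v'$ can compete for the same third-side edge $w_iw_{i+1}$ (this happens whenever both $v'w_i$ and $v'w_{i+1}$ are also large), and nothing in the sketch resolves those collisions; the ``balanced bipartite is extremal'' assertion is a plausible heuristic, not an argument. This is precisely where the paper's induction does the work for you. The paper peels off a \emph{pair} of vertices $u,v$ where $uv$ has maximum $f$-value; for each $z$ with $\max\{f(u,z),f(v,z)\}>m(r)$, exactly one of $uz,vz$ is large and the other is small with product $\leq m(r)^2-1$ by Corollary~\ref{importantcor}, and these pairings are automatically disjoint because they all involve $u$ or $v$ and both are immediately removed. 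The lone ``unmatched'' large edge per step is $uv$ itself (bounded by $r\leq m(r)^2-1$), and with at most $t/2$ steps plus the base case one gets the $t+5$ slack. In other words, the induction \emph{is} a conflict-free realization of your matching $\phi$; you should either prove your matching lemma directly (e.g.\ via a deficiency form of Hall's theorem applied to the auxiliary bipartite graph of large edges versus third-side small edges, which requires real care) or adopt the two-vertex peeling, which produces the matching for free and closes the gap.
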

\begin{proof}
Fix an integer $r\geq 3$.  Given an integer $t$ and $R\in \tilde{M}_r(t)$, set $g(R) =m(r)^{{t\choose 2}+t+5}( \frac{m(r)^2-1}{m(r)^2})^{a_R}$.  We proceed by induction on $t$.  Assume $t=3$ and fix $R\in \tilde{M}_r(t)$.  In this case $a_R\leq 3$, so $g(R) \geq m(r)^{5}(m(r)^2-1)^3$.  It is straightforward to verify that $r^3 \leq m(r)^{5}$, as $r\geq 3$.  Therefore, 
$$
W(R)\leq r^3 \leq m(r)^{5}(m(r)^2-1)^3 \leq g(R).
$$
Assume now that $t>3$ and the claim holds for all $t'$ with $3\leq t'<t$.  Fix $R\in \tilde{M}_r(t)$, set $a=a_R$, and for $xy\in {[t]\choose 2}$, set $f(x,y)=f^R(x,y)$.  If $a=0$ then $W(R)\leq m(r)^{t\choose 2}\leq g(R)$ trivially.  So assume $a>0$.

Choose $uv \in E(R)$ such that $f(u,v)$ is maximum, and note that $a>0$ implies $f(u,v)>m(r)$.  Define $R'$ to be the $r$-graph with $V(R')=[t]\setminus \{u,v\}$ and for each $xy\in E(R')$, $c^{R'}=c^R|_{V(R')}$.  Let $a' = a_{R'}$,
$$
Y=\{z\in V(R'): \max\{ f(u,z), f(v,z)\} >m(r)\},
$$
and set $s= |Y|$.  For all $z\in Y$, because $\max\{f(u,z),f(v,z)\}>m(r)$ and $f(u,v)>m(r)$, Corollary \ref{importantcor} implies $\min \{f(u,z),f(v,z)\}<m(r)$ and  $f(u,z)f(v,z)\leq m(r)^2-1$.  By the definition of $Y$, for all $z\in V(R')\setminus Y$, $\max\{f(u,z),f(v,z)\}\leq m(r)$, so $f(u,z)f(v,z)\leq m(r)^2$.  Combining these facts we have
\begin{align*}
W(R)&=W(R')f(u,v)\Bigg(\prod_{z\in Y} f(u,z)f(z,v)\Bigg)\Bigg(\prod_{z\notin Y} f(u,z)f(z,v)\Bigg)\\
&\leq W(R')f(u,v) (m(r)^2-1)^{s}m(r)^{2(t-2-s)} \leq W(R')r (m(r)^2-1)^{s}m(r)^{2(t-2-s)}.
\end{align*}
Using (\ref{m(r)^2-1>r}), we can upper bound this by
\begin{align*}
W(R') (m(r)^2-1)^{s+1}m(r)^{2(t-2-s)} = W(R') \Bigg( \frac{m(r)^2-1}{m(r)^2}\Bigg)^{s+1} m(r)^{2t-2}.
\end{align*}
By the induction hypothesis, this is at most
\begin{align*}
& m(r)^{{t-2\choose 2}+t-2+5} \Bigg( \frac{m(r)^2-1}{m(r)^2}\Bigg)^{a'}\Bigg( \frac{m(r)^2-1}{m(r)^2}\Bigg)^{s+1} m(r)^{2t-2} = m(r)^{{t\choose 2}+t+4} \Bigg( \frac{m(r)^2-1}{m(r)^2}\Bigg)^{a'+s+1}.
\end{align*}
Note that $a=a' + |\{zu: z\in Y$ and $f(u,z)>m(r)\}\cup \{vz: z\in Y$ and $f(v,z)>m(r)\}\cup \{uv\}|$.  Because for each $z\in Y$ exactly one of $f(u,z)$ or $f(v,z)$ is strictly greater than $m(r)$, this shows $a=a'+s+1$.  Therefore,
$$
W(R)\leq m(r)^{{t\choose 2}+t+4} \Bigg( \frac{m(r)^2-1}{m(r)^2}\Bigg)^{a} < g(R).
$$
This completes the proof.
\end{proof}

We now fix some notation.  Suppose $r\geq 3$ is an integer, $R$ is an $r$-graph, and $u\in V(R)$.  For $i\in [r]$, set
\begin{align*}
N^R_i(u) &= \{v\in V(R): i\in c^R(uv)\} \textnormal{ and }\\
\Gamma^R_i(u) &= \{v\in V(R): f^R(u,v) = i\}.
\end{align*}
Then define $\deg^R_i(u) = |N^R_i(u)|$ and $\mu^R_i(u)= |\Gamma^R_i(u)|$.  We now prove the second lemma.

\begin{lemma}\label{boundnonf(ij)=m(r)}
For every integer $r\geq 3$ there are $C_1$, $C_2$, $C_3$, depending only on $r$ such that for every $0<\epsilon<1$, there is $M$ such if $t>M$ the following holds.  Suppose $R\in \tilde{M}_r(t)$ with $W(R)>m(r)^{(1-\epsilon){t\choose 2}}$.  Let $a_R=|\{ij\in E(R) : f^R(i,j)>m(r)\}|$ and $b_R=|\{ij\in E(R): f^R(i,j)<m(r)\}|$.  Then 
\begin{enumerate}
\item $a_R\leq C_1\epsilon t^2$,
\item $b_R\leq C_2\epsilon t^2$, and
\item $|\{u : \mu^R_{m(r)}(u)< (1-\sqrt{\epsilon})(t-1)\}| \leq \sqrt{\epsilon}C_3 t$.
\end{enumerate}
\end{lemma}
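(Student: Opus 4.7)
The plan is to establish (1), (2), (3) in sequence, with the heavy lifting already encapsulated in Lemma \ref{boundW(R)}. First I would derive (1) by directly applying that lemma: combining the hypothesis $W(R)>m(r)^{(1-\epsilon){t\choose 2}}$ with the upper bound $W(R)\leq m(r)^{{t\choose 2}+t+5}((m(r)^2-1)/m(r)^2)^{a_R}$ and taking $\log_{m(r)}$ of both sides yields $a_R\alpha_r\leq \epsilon{t\choose 2}+t+5$, where $\alpha_r:=\log_{m(r)}(m(r)^2/(m(r)^2-1))>0$ depends only on $r$. Choosing $M=M(\epsilon)$ large enough that $t+5\leq \epsilon{t\choose 2}$ for all $t>M$ then yields $a_R\leq C_1\epsilon t^2$ for a suitable $C_1=C_1(r)$.

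Next I would establish (2) by combining (1) with a crude factor-by-factor bound on $W(R)$. Splitting the product $W(R)=\prod_{ij}f^R(i,j)$ according to whether $f^R(i,j)$ is $>m(r)$, $=m(r)$, or $<m(r)$, and using $f^R(i,j)\leq r$ on the first class and $f^R(i,j)\leq m(r)-1$ on the last, yields
$$W(R)\leq r^{a_R}\,m(r)^{{t\choose 2}-a_R-b_R}\,(m(r)-1)^{b_R}.$$
Comparing with the hypothesis, dividing through by $m(r)^{{t\choose 2}}$, and taking $\log_{m(r)}$ gives $b_R\beta_r\leq a_R\gamma_r+\epsilon{t\choose 2}$, where $\beta_r:=\log_{m(r)}(m(r)/(m(r)-1))>0$ and $\gamma_r:=\log_{m(r)}(r/m(r))\geq 0$ again depend only on $r$. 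Inserting the bound on $a_R$ from (1) produces $b_R\leq C_2\epsilon t^2$ for an appropriate $C_2=C_2(r)$.

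Finally, (3) follows from a double count. The total number of edges $uv$ with $f^R(u,v)\neq m(r)$ is $a_R+b_R$, so summing the incidences $(t-1)-\mu^R_{m(r)}(u)$ over all $u\in[t]$ gives $2(a_R+b_R)$. Each $u$ in the bad set $B:=\{u:\mu^R_{m(r)}(u)<(1-\sqrt{\epsilon})(t-1)\}$ contributes more than $\sqrt{\epsilon}(t-1)$ to this sum, so $|B|\sqrt{\epsilon}(t-1)\leq 2(C_1+C_2)\epsilon t^2$, producing $|B|\leq C_3\sqrt{\epsilon}\,t$ after absorbing the factor $t/(t-1)\leq 2$ into $C_3=C_3(r)$ (valid for $t>M$, which is guaranteed by our choice of $M$).

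No single step here is difficult once Lemma \ref{boundW(R)} is available; the real work sits in that earlier lemma, which carefully accounts for triangle-inequality interactions between edges with $f^R>m(r)$ to extract the crucial factor $((m(r)^2-1)/m(r)^2)^{a_R}$. The present argument is essentially bookkeeping: passing to logarithms, using (1) to absorb the $a_R\gamma_r$ term when controlling $b_R$, and a double count to convert the edge bounds into the vertex degree bound in (3). The only place where care is needed is in choosing $M$ large enough (as a function of $\epsilon$) to absorb the lower-order terms $t+5$ and $t/(t-1)$ into the leading $\epsilon t^2$ estimates.
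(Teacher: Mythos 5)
Your proposal is correct and follows essentially the same route as the paper: part (1) by taking logarithms of the bound from Lemma \ref{boundW(R)} and absorbing the $t+5$ term for large $t$, part (2) by the factor-by-factor bound $W(R)\leq r^{a_R}m(r)^{{t\choose 2}-a_R-b_R}(m(r)-1)^{b_R}$ combined with part (1), and part (3) by a double count (the paper phrases it as a lower bound on $\sum_u\mu^R_{m(r)}(u)$ while you count the non-$m(r)$ incidences $(t-1)-\mu^R_{m(r)}(u)$ directly, but these are the same computation). The only cosmetic difference is that the paper drops the $m(r)^{-a_R}$ factor in part (2) (using $a_R\log r$ in place of your $a_R\gamma_r=a_R\log_{m(r)}(r/m(r))$), which costs nothing.
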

\begin{proof}
Let $r,t\geq 3$ be integers.  Fix $\epsilon>0$ and suppose $R\in \tilde{M}_r(t)$ is such that $W(R) >m(r)^{(1-\epsilon){t\choose 2}}$.  Set $a=a_R$ and $b=b_R$. All logs in this proof are base $m(r)$. Our assumptions and Lemma \ref{boundW(R)} imply $m(r)^{(1-\epsilon){t\choose 2}}< W(R)\leq m(r)^{{t\choose 2}+t+5}(\frac{m(r)^2-1}{m(r)^2})^a$.  Consequently, 
\begin{align}\label{boundnonf(ij)=m(r)a}
\Bigg(\frac{m(r)^2}{m(r)^2-1} \Bigg)^a< m(r)^{\epsilon{t\choose 2}+t+5}.
\end{align}
Suppose $M_1$ is large enough so that $t>M_1$ implies $t(1-\frac{\epsilon}{2}) + 5<\frac{\epsilon t^2}{4}$, and assume  $t>M_1$.  Taking log of both sides of (\ref{boundnonf(ij)=m(r)a}) we obtain
$$
a\log\Bigg(\frac{m(r)^2}{m(r)^2-1}\Bigg) \leq \epsilon{t\choose 2} +t+5< \frac{\epsilon}{2} t^2 +\frac{\epsilon}{4} t^2= \frac{3\epsilon t^2}{4},
$$
where the last inequality is by assumption on $M_1$.  Therefore $a\leq C_1 \epsilon t^2$, for appropriate choice of $C_1=C_1(r)$.   This proves (1).  For (2), note that by the definitions of $W(R)$, $a$, and $b$ we have 
$$
W(R) \leq (m(r)-1)^b r^a m(r)^{{t\choose 2}-a-b}.
$$
Thus our assumptions and part (1) imply that,
$$
m(r)^{(1-\epsilon){t\choose 2}}< (m(r)-1)^b r^{C_1\epsilon t^2} m(r)^{{t\choose 2}-a-b} \leq  (m(r)-1)^b r^{C_1\epsilon t^2} m(r)^{{t\choose 2}-b} = \Bigg(\frac{m(r)-1}{m(r)}\Bigg)^b r^{C_1\epsilon t^2}m(r)^{{t\choose 2}}.
$$
Consequently,
$$
\Bigg(\frac{m(r)}{m(r)-1}\Bigg)^b < m(r)^{\epsilon{t\choose 2}}r^{C_1\epsilon t^2}.
$$
Taking $\log$ of both sides, we obtain
$$
b\log \Bigg(\frac{m(r)}{m(r)-1}\Bigg) < \epsilon {t\choose 2} + C_1\epsilon t^2 \log r < \Bigg(\frac{1}{2} + C_1\log r\Bigg)\epsilon t^2,
$$
from which (2) follows directly for an appropriate choice of $C_2=C_2(r)$.  For (3), parts (1) and (2) yield 
$$
|\{ij\in E(R) : f(i,j)= m(r)\}| = {t\choose 2}-a-b \geq  {t\choose 2} - (C_1+C_2)\epsilon t^2 = \Bigg(\frac{1}{2} -(C_1+C_2)\epsilon\Bigg)t^2 - \frac{t}{2}.
$$
Setting $m=|\{ u\in V(R): \mu^R_{m(r)}(u)<(1-\sqrt{\epsilon})(t-1)\}|$, it is clear that
$$
\sum_{v\in V(R)} \mu^R_{m(r)}(v) \leq m(1-\sqrt{\epsilon})(t-1) + (t-m)(t-1) = t^2-t-\sqrt{\epsilon}mt+\sqrt{\epsilon}m.
$$
On the other hand,  let $\mathcal{G}$ be the graph with vertex set $\mathcal{V}=[t]$ and edge set $\mathcal{E}=\{ij\in {\mathcal{V} \choose 2}: f^R(ij)=m(r)\}$.  Then 
$$
\sum_{v\in V(R)} \mu^R_{m(r)}(v)=\sum_{v\in \mathcal{V}} \mathcal{DEG}(v)=2|\mathcal{E}| \geq 2 \Bigg(\Bigg(\frac{1}{2}-\epsilon (C_1+C_2)\Bigg)t^2 -\frac{t}{2}\Bigg)=  (1-2\epsilon (C_1+C_2))t^2 -t.
$$
Consequently $(1-2\epsilon (C_1+C_2))t^2 -t\leq t^2-t-\sqrt{\epsilon}mt+\sqrt{\epsilon}m$.  Simplifying this we obtain 
$$
m\leq \frac{2\epsilon (C_1+C_2)t^2}{\sqrt{\epsilon}(t-1)} = 2\sqrt{\epsilon}(C_1+C_2) \frac{t^2}{t-1}.
$$
Set $C_3=3(C_1+C_2)$.  It is now clear that there is $M_2$ such that if $t>M_2$, then $m\leq \sqrt{\epsilon}C_3 t$, so (3) holds.  Therefore if $t> M=\max\{ M_1, M_2\}$, (1), (2), and (3) hold.
\end{proof}

\medskip

\subsection{Proof of Theorem \ref{stabthm}}
In this section we will prove our stability result below.

\begin{theorem}\label{stabthm}
Fix an integer $r\geq 3$.  For all $\delta>0$ there is $0<\epsilon<1$ and $M$ such that for all $t>M$ the following holds. If $R\in \tilde{M}_r(t)$ and $W(R)>m(r)^{(1-\epsilon){t\choose 2}}$, then $R$ is $\delta$-close to $\tilde{C}_r(t)$.
\end{theorem}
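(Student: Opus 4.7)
My plan is to apply Lemma~\ref{boundnonf(ij)=m(r)} to reduce to a regime where almost every edge $ij$ of $R$ has $|c^R(ij)|=m(r)$, characterize what such maximal color sets can be, and then construct an element of $\tilde{C}_r(t)$ that disagrees with $R$ on only $O(\sqrt{\epsilon})\,t^2$ edges, where $\epsilon$ is chosen small in terms of $\delta$ and $r$.

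The preparatory step is uniform for both parities. Given $\delta>0$, pick $\epsilon=\epsilon(\delta,r)$ small and apply Lemma~\ref{boundnonf(ij)=m(r)} to obtain at most $(C_1+C_2)\epsilon t^2$ ``non-standard'' edges (those with $f^R(ij)\ne m(r)$) and a set $V'$ of at least $t-C_3\sqrt{\epsilon}\,t$ ``typical'' vertices. A short case analysis of metric subsets of $[r]$ of size $m(r)$---using that $\min A\ge \lceil (\max A)/2\rceil$ forces $\min A\ge m(r)-1$---shows that on a standard edge the color set must equal $[r/2,r]$ when $r$ is even, and must equal one of $[(r-1)/2,r-1]$ or $[(r+1)/2,r]$ when $r$ is odd; I will call the latter two sets \emph{inner} and \emph{outer} respectively. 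The even case is then immediate: set $c^{R'}(ij)=[r/2,r]$ for every $ij$; then $R$ and $R'$ agree on all standard edges, so $|\Delta(R,R')|\le (C_1+C_2)\epsilon t^2\le \delta t^2$.

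The odd case is the main content. The crux is a triangle rigidity statement: if $uv$ and $vw$ are inner and $uw$ is standard, then $uw$ must be inner, for otherwise the triple $\bigl((r-1)/2,(r-1)/2,r\bigr)$ would violate the triangle inequality since $r>(r-1)/2+(r-1)/2$. Next I will pass to the subset $V''\subseteq V'$ of typical vertices whose non-standard-edge degree within $V$ is at most $\sqrt{\epsilon}\,t$; averaging against the count of non-standard edges gives $|V\setminus V''|=O(\sqrt{\epsilon})\,t$. I then construct the partition greedily: while $V''$ is nonempty, pick any remaining vertex $u$, call it the ``leader'' $u_i$ of a new class $V_i$, place in $V_i$ every remaining $v\in V''$ with $u_iv$ inner, and remove $V_i$. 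This produces a partition $V_1,\dots,V_s$ of $V''$; finally assign the vertices in $V\setminus V''$ arbitrarily to obtain a partition of $[t]$, and define $R'\in \tilde{C}_r(t)$ to have inner color within each part and outer color between parts.

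The verification of $|\Delta(R,R')|\le \delta t^2$ then has three contributions: (i) non-standard edges, $O(\epsilon)\,t^2$; (ii) edges incident to $V\setminus V''$, $O(\sqrt{\epsilon})\,t^2$; and (iii) ``misplaced'' standard edges. For an outer edge $uv$ inside some class $V_i$ with $u,v\in V''\setminus\{u_i\}$, both $u_iu$ and $u_iv$ are inner and $uv$ is standard, so triangle rigidity forces $uv$ to be inner, contradiction; hence (iii) only involves inner edges between different classes. For such an inner edge $uv$ with $u\in V_i$, $v\in V_j$, $i<j$, the greedy order ensures $u_iv$ is not inner, and triangle rigidity applied to $(u_i,u,v)$ then forces $u_iv$ to be non-standard; charging $uv$ to the non-standard edge $u_iv$ gives a total bounded by $\sum_i |V_i|\cdot (\textrm{non-std degree of }u_i)\le t\cdot \sqrt{\epsilon}\,t=\sqrt{\epsilon}\,t^2$. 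The main obstacle I anticipate is precisely the bookkeeping in (iii)---checking that the greedy leader ordering really lets every misplaced inner between-class edge be charged to a non-standard leader edge without losing a factor of $t$---together with tuning the threshold $\sqrt{\epsilon}$ so that (i), (ii), and (iii) add up to less than $\delta t^2$ after choosing $\epsilon$ small in terms of $\delta$ and $r$.
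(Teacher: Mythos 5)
Your blueprint matches the paper's (apply Lemma~\ref{boundnonf(ij)=m(r)}, pass to typical vertices, build a greedy partition, charge discrepancies), but there is a genuine gap in the classification step, and it propagates through your odd case. You assert that a standard edge ($f^R(ij)=m(r)$) must have $c^R(ij)$ equal to $[r/2,r]$ when $r$ is even, and to one of the ``inner'' or ``outer'' sets when $r$ is odd, arguing from a case analysis of \emph{metric subsets} of $[r]$ of size $m(r)$. This is false. First, $c^R(ij)$ need not be a metric subset: a violating triangle requires three distinct vertices, so a violating triple sitting inside a single $c^R(ij)$ is not forbidden in $\tilde M_r(t)$. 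Second, even for an edge lying in a fully standard triangle, Corollary~\ref{keycor} (via Lemma~\ref{lemmatriangle}) only forces, for $r$ odd, $c^R(ij)\subseteq[m(r)-1,r]$ (a set of size $m(r)+1$); case 2(b) of Corollary~\ref{keycor} explicitly permits the ``third'' edge to be \emph{any} of the $m(r)+1$ subsets of $[m(r)-1,r]$ of size $m(r)$. Concretely, with $r=5$, $m(r)=3$: taking $c^R(uv)=\{2,4,5\}$ (neither inner $\{2,3,4\}$ nor outer $\{3,4,5\}$, and also not a metric set) and $c^R(uw)=c^R(vw)=\{3,4,5\}$, one checks directly that no violating triple lies in the product of these three sets, so this triangle is admissible in a metric $r$-graph. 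What the paper actually extracts (Lemma~\ref{keylem}(ii)) is the weaker dichotomy: (a) either $r\in c^R(xy)$ or $c^R(xy)=[m(r)-1,r-1]$, and (b) either $m(r)-1\in c^R(xy)$ or $c^R(xy)=[m(r),r]$.

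The gap is not cosmetic. Your greedy partition groups $v$ with a leader $u_i$ only when $u_iv$ is \emph{inner} ($=[m(r)-1,r-1]$), and your step (iii) bookkeeping considers only inner and outer misplaced edges. ``Mixed'' standard edges (those containing both $m(r)-1$ and $r$, i.e.\ equal to $[m(r)-1,r]\setminus\{k\}$ for some $k\in[m(r),r-1]$) can and do occur, are disagreements with every $R'\in\tilde C_r(t)$ wherever they fall, and are never charged by your argument; moreover $u_iv$ being mixed wrongly leaves $v$ outside $V_i$. The paper resolves both problems simultaneously by building the partition around the $m(r)-1$-neighborhoods $N^R_{m(r)-1}(u_i)$ rather than around inner edges, and by invoking Lemma~\ref{keylem}(ii) in the contradiction arguments for the sets $Y_i$, $Z_{ij}$, $W_{ij}$: the triangle rigidity used there is that $(r,m(r)-1,m(r)-1)$ is a violating triple whenever those \emph{individual} colors are present, which does not require the whole color sets to be inner or outer. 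Relatedly, your stated justification for triangle rigidity (``otherwise $((r-1)/2,(r-1)/2,r)$ violates'') only rules out $r\in c^R(uw)$; to conclude $c^R(uw)=[m(r)-1,r-1]$ one also needs the bound $c^R(uw)\subseteq[m(r)-1,r-1]$ coming from $|a-b|\le c\le a+b$ on the triangle, not from your (incorrect) two-set classification. Finally, your even case has a milder version of the same issue: a standard edge meeting the atypical set $V_0$ need not have color set $[r/2,r]$, so the error count must also include a term for $E(V_0,V)$ of order $\sqrt{\epsilon}\,t^2$; the paper does so via Lemma~\ref{keylem}(i), whereas your stated bound $(C_1+C_2)\epsilon t^2$ omits it.
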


The following is a consequence of Lemma \ref{sizelemma}, so its proof appears in the appendix along with the proof of Lemma \ref{sizelemma}.

\begin{lemma}\label{lemmatriangle}
Suppose $r\geq 3$ is an integer and $A,B,C\subseteq [r]$ are such that $|A|=|B|=|C|=m(r)$ and there is no violating triple $(a,b,c)\in A\times B\times C$.  Then one of the following holds:
\begin{enumerate}
\item $r$ is even and $A=B=C=[m(r)-1,r]$.
\item $r$ is odd and for some relabeling $\{A,B,C\}=\{D,E,F\}$ one of the following holds:
\begin{enumerate}
\item $D=F=E=[m(r)-1, r-1]$.
\item $D=F=[m(r),r]$, $E\subseteq [m(r)-1,\ldots r]$.
\end{enumerate}
\end{enumerate}
\end{lemma}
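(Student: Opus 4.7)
The strategy is to translate the ``no violating triple'' hypothesis into arithmetic inequalities on the extrema of $A, B, C$, conclude that all three sets lie inside the interval $[m(r)-1, r]$, and then perform a short finite case analysis (trivial for $r$ even, a boolean enumeration for $r$ odd).

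\textbf{Step 1 (triangle inequalities at the extremes).} Let $a = \min A$, $A' = \max A$, and analogously for $B, C$. Unpacking the hypothesis that no $(x,y,z) \in A \times B \times C$ is violating, i.e.\ $|x-y| \leq z \leq x+y$ for all such triples, yields the three extremal bounds
\[ A' \leq b + c, \qquad B' \leq a + c, \qquad C' \leq a + b, \]
which are just the three triangle inequalities evaluated at the unfavourable endpoints.

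\textbf{Step 2 (containment in $[m(r)-1,r]$).} From $|A|=m(r)$ one has $A' \geq a + m(r)-1$, and analogously for $B, C$. Substituting into Step~1 gives the three inequalities $b+c-a \geq m(r)-1$, $a+c-b \geq m(r)-1$, $a+b-c \geq m(r)-1$. Adding these in pairs yields $a,b,c \geq m(r)-1$, so $A,B,C \subseteq [m(r)-1,r]$.

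\textbf{Step 3 (even case).} When $r$ is even, $|[m(r)-1,r]| = r/2+1 = m(r)$, so the containment is an equality and $A = B = C = [m(r)-1,r]$, which is conclusion~(1).

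\textbf{Step 4 (odd case, boolean reduction).} When $r$ is odd, $|[m(r)-1,r]| = m(r)+1$, so each of $A,B,C$ omits exactly one element of that interval. Encode the omission by two indicators per set: $\alpha = 1$ iff $m(r)-1 \notin A$, $\alpha' = 1$ iff $r \notin A$, with $\alpha+\alpha' \leq 1$; define $\beta,\beta'$ and $\gamma,\gamma'$ analogously. Writing $\min A = m(r)-1+\alpha$ and $\max A = r-\alpha'$ (and similarly) and substituting into the three inequalities from Step~1 collapses them to the boolean system
\[ \alpha+\beta+\gamma' \geq 1, \qquad \alpha+\beta'+\gamma \geq 1, \qquad \alpha'+\beta+\gamma \geq 1. \]

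\textbf{Step 5 (case check).} Enumerate on the number of $1$s among $\alpha,\beta,\gamma$. If zero, the three constraints force $\alpha'=\beta'=\gamma'=1$, producing $A=B=C=[m(r)-1,r-1]$, which is case~(2a). If exactly one, say $\alpha=1$ and $\beta=\gamma=0$, the third constraint forces $\alpha'=1$, contradicting $\alpha+\alpha' \leq 1$; this case is impossible. If at least two, the boolean system holds automatically, each set with indicator $1$ is forced to equal $[m(r),r]$ (since such a set omits $m(r)-1$ and can omit no other element), and the remaining set (if any) is an arbitrary $m(r)$-subset of $[m(r)-1,r]$. Relabeling so that the two sets equal to $[m(r),r]$ are named $D$ and $F$ recovers case~(2b).

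The main obstacle is keeping the odd-case bookkeeping transparent; the key device is the indicator encoding in Step~4, which reduces what would otherwise be an unwieldy analysis over $m(r)$-subsets to a transparent boolean satisfaction problem on at most eight states.
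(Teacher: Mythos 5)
Your proof is correct, but it takes a genuinely different route from the paper. The paper proves this lemma by induction on $r$: when $A,B,C\subseteq[r-1]$ it invokes the induction hypothesis (using $m(r)=m(r-1)$ for odd $r$), and otherwise it chases minima and maxima directly, calling on Lemma \ref{sizelemma} in the even case to rule out two of the truncated sets being too large. You instead give a direct, induction-free argument: the three extremal inequalities $\max A\leq \min B+\min C$ (and its two rotations), combined with $\max - \min \geq m(r)-1$ for an $m(r)$-element set, immediately force $\min A,\min B,\min C\geq m(r)-1$ and hence $A,B,C\subseteq[m(r)-1,r]$; the even case then ends instantly by a cardinality count, and the odd case reduces to a small boolean system via your indicator encoding of which endpoint of $[m(r)-1,r]$ each set omits. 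I checked the arithmetic: for odd $r$ the substitution $2(m(r)-1)=r-1$ does collapse the three extremal inequalities to $\alpha'+\beta+\gamma\geq 1$, $\alpha+\beta'+\gamma\geq 1$, $\alpha+\beta+\gamma'\geq 1$, and your three cases (zero, one, at least two of $\alpha,\beta,\gamma$ equal to $1$) are exhaustive and yield (2a), a contradiction with $\alpha+\alpha'\leq 1$, and (2b) respectively; note that only the containment $E\subseteq[m(r)-1,r]$ is needed for the third case, which you have from Step 2, so the slightly loose phrase ``arbitrary $m(r)$-subset'' is harmless. What your approach buys is self-containment (no appeal to Lemma \ref{sizelemma} and no induction) and a shorter, more transparent odd case; what the paper's approach buys is uniformity with the proof of Lemma \ref{sizelemma} itself, which is proved by the same kind of induction on $r$ and is needed elsewhere in any event.
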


An immediate corollary of this is the following.

\begin{corollary}\label{keycor}
Suppose $r, t\geq 3$ are integers, $R\in \tilde{M}_r(t)$, and $xy,yz,xz\in {[t]\choose 2}$ are such that $f^R(x,y)=f^R(y,z)=f^R(x,z)=m(r)$.  Then one of the following holds:
\begin{enumerate}
\item $r$ is even and $c^R(xy)=c^R(yz)=c^R(xz) = [m(r)-1,r]$.
\item $r$ is odd and for some relabeling $\{x,y,z\}=\{u,v,z\}$ one of the following holds:
\begin{enumerate}
\item $c^R(uv)=c^R(uw)=c^R(vw)=[m(r)-1,r-1]$.
\item $c^R(uv)=c^R(uw)=[m(r),r]$, $c^R(v,w)\subseteq [m(r)-1, r]$.
\end{enumerate}
\end{enumerate}
\end{corollary}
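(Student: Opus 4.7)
The plan is to derive Corollary \ref{keycor} as a direct translation of Lemma \ref{lemmatriangle} applied to the color sets on the three edges of the triangle $\{x,y,z\}$. First, I would observe that for $r \geq 3$ we have $m(r) \geq 2$, so the hypothesis $f^R(x,y) = f^R(y,z) = f^R(x,z) = m(r)$ forces $|c^R(xy)| = |c^R(yz)| = |c^R(xz)| = m(r)$ (none of the three color sets can be the empty or singleton degenerate case that $f^R$ collapses to $1$). Set $A = c^R(xy)$, $B = c^R(yz)$, $C = c^R(xz)$, so $A, B, C \subseteq [r]$ with $|A| = |B| = |C| = m(r)$.

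Next, I would use the hypothesis that $R$ is a metric $r$-graph, i.e.\ $R \in \tilde{M}_r(t)$. By definition, $R$ contains no violating triangle, and in particular the triangle on $\{x,y,z\}$ is not violating. Unwinding the definition of a violating triangle, this says precisely that there is no violating triple $(a,b,c) \in A \times B \times C$.

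At this point the hypotheses of Lemma \ref{lemmatriangle} are exactly satisfied, so I would apply it to $A, B, C$ and read off the cases. When $r$ is even, the lemma yields $A = B = C = [m(r)-1, r]$, which is conclusion (1). When $r$ is odd, the lemma gives a relabeling $\{A,B,C\} = \{D,E,F\}$ such that either $D = E = F = [m(r)-1, r-1]$ or $D = F = [m(r), r]$ with $E \subseteq [m(r)-1, r]$; pulling the relabeling back to the vertices via the correspondence $A \leftrightarrow xy$, $B \leftrightarrow yz$, $C \leftrightarrow xz$ yields a relabeling $\{x,y,z\} = \{u,v,w\}$ realizing cases (2a) and (2b) respectively.

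There is essentially no obstacle here; the substantive content lives in Lemma \ref{lemmatriangle} (which in turn rests on the case analysis in Lemma \ref{sizelemma}, deferred to the appendix). The only minor point to double-check is that the edge/color correspondence is tracked correctly under the relabeling in the odd case, so that the three listed color sets in (2a) and (2b) are assigned to the matching pairs $uv$, $uw$, $vw$ — but this is just bookkeeping.
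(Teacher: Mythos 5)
Your proposal is correct and follows essentially the same route as the paper: set $A = c^R(xy)$, $B = c^R(yz)$, $C = c^R(xz)$, note that $R \in \tilde{M}_r(t)$ rules out any violating triple in $A\times B\times C$, and apply Lemma \ref{lemmatriangle}. The one point you spell out that the paper treats as implicit — that $f^R(xy)=m(r)\geq 2$ forces $|c^R(xy)|=m(r)$ rather than the degenerate $\max\{\cdot,1\}$ case — is a worthwhile small check, but the argument is otherwise identical.
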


\begin{proof}
$R\in \tilde{M}_r(t)$ implies there is no violating triple $(a,b,c)\in c^R(uv)\times c^R(uw)\times c^R(vw)$.  Thus the corollary follows immediately by applying Lemma \ref{lemmatriangle} to $A=c^R(uv)$, $B=c^R(uw)$ and $C=c^R(vw)$.
\end{proof}

We will use the following consequence of Corollary \ref{keycor}.

\begin{lemma}\label{keylem}
For all integers $r\geq 3$ and $0<\epsilon<1$, there is $M$ such that $t>M$ and $R\in \tilde{M}_r(t)$ implies the following.  Let $V=[t]$ and $V_0=\{u\in V: \mu^R_{m(r)}(u)< (1-\sqrt{\epsilon})(t-1)\}$.
\begin{enumerate}[(i)]
\item  $r$ is even and for all $xy\in {V\choose 2}\setminus E(V,V_0)$, $f^R(x,y) =m(r)$ implies $c^R(xy)=[m(r)-1,r]$.
\item $r$ is odd and for all $xy\in {V\choose 2}\setminus E(V,V_0)$, $f^R(x,y)=m(r)$ implies one of the following:
\begin{enumerate}
\item either $r\in c^R(xy)$ or $c^R(xy)= [m(r)-1, r-1]$.
\item either $m(r)-1\in c^R(xy)$ or $c^R(xy)= [m(r), r]$.
\end{enumerate} 
\end{enumerate} 
\end{lemma}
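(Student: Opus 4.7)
The plan is to pass to a third vertex $z$ so that $\{x,y,z\}$ forms a triangle of $f^R$-value $m(r)$, and then read off the conclusion from Corollary~\ref{keycor}.

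Given $xy\in {V\choose 2}\setminus E(V,V_0)$ with $f^R(x,y)=m(r)$, both $x$ and $y$ lie in $V\setminus V_0$, so $\mu^R_{m(r)}(x),\mu^R_{m(r)}(y)\ge (1-\sqrt{\epsilon})(t-1)$. Since $\Gamma^R_{m(r)}(x)$ avoids $x$ and $\Gamma^R_{m(r)}(y)$ avoids $y$, inclusion--exclusion inside $V$ gives
\[
|\Gamma^R_{m(r)}(x)\cap\Gamma^R_{m(r)}(y)|\ge 2(1-\sqrt{\epsilon})(t-1)-t,
\]
and this intersection is automatically disjoint from $\{x,y\}$. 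For $\epsilon$ with $\sqrt{\epsilon}<1/2$ and all sufficiently large $t$ (choose $M$ accordingly), the right side is positive, so one can pick $z\in V\setminus\{x,y\}$ with $f^R(x,z)=f^R(y,z)=m(r)$.

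Next I would apply Corollary~\ref{keycor} to $\{x,y,z\}$. When $r$ is even the corollary directly yields $c^R(xy)=[m(r)-1,r]$, proving~(i). When $r$ is odd, either all three sides are colored $[m(r)-1,r-1]$ (so $c^R(xy)=[m(r)-1,r-1]$ and clauses (a) and (b) are both witnessed), or some relabeling makes two sides exactly $[m(r),r]$ and the third a subset of $[m(r)-1,r]$. If $xy$ corresponds to one of the first two sides then $c^R(xy)=[m(r),r]$, satisfying (a) via $r\in c^R(xy)$ and (b) directly. If $xy$ is the third side then $c^R(xy)$ is a size-$m(r)$ subset of the size-$(m(r)+1)$ set $[m(r)-1,r]$, hence misses exactly one element: if it misses $r$ then $c^R(xy)=[m(r)-1,r-1]$ and (a) holds; if it misses $m(r)-1$ then $c^R(xy)=[m(r),r]$ and (b) holds; otherwise $c^R(xy)$ contains both $m(r)-1$ and $r$ and both clauses hold.

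The only substantive step is the simple double counting that produces $z$; after that both cases follow from a mechanical reading of the triangle classification in Corollary~\ref{keycor}. I do not foresee any real obstacle beyond carefully enumerating the subcases of the odd case as above.
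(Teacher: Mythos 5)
Your proof is correct and is essentially the paper's own argument: find a common vertex $z$ with $f^R(x,z)=f^R(y,z)=m(r)$ by a degree count, then read off the conclusion from Corollary~\ref{keycor}. Indeed your bound $2(1-\sqrt{\epsilon})(t-1)-t$ is literally the paper's $t-2-2\sqrt{\epsilon}(t-1)$ after rearranging, so even the arithmetic matches; your odd-case enumeration is slightly more explicit than the paper's (which simply observes $c^R(xy)\subseteq[m(r)-1,r]$ has size $m(r)$, hence misses exactly one element of a set of size $m(r)+1$), but it reaches the same conclusion. One small note: you correctly flag that the degree count requires $\sqrt{\epsilon}<1/2$; the paper's proof implicitly relies on the same restriction (its displayed inequality cannot hold for large $t$ otherwise), so if anything your version is a touch more careful, though the restriction is harmless since downstream applications always choose $\epsilon$ small.
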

\begin{proof}
Fix an integer $r\geq 3$ and $0<\epsilon <1$.  Choose $M$ large enough so that $t>M$ implies $t-2-2\sqrt{\epsilon}(t-1)\geq 1$ and fix $R\in \tilde{M}_r(t)$.  Suppose $xy\in {V\choose 2}\setminus E(V_0,V)$ and $f^R(x,y)=m(r)$.  Since $x,y\notin V_0$, $\min \{ \mu^R_{m(r)}(x), \mu^R_{m(r)}(y)\} \geq(1-\sqrt{\epsilon})(t-1)$.  Therefore
$$
|(V\setminus\{x,y\}) \cap \Gamma^R_{m(r)}(x)\cap \Gamma^R_{m(r)}(y)| \geq t- 2-2\sqrt{\epsilon}(t-1) \geq 1,
$$
where the last inequality holds by our assumption on $M$.  Thus there is $z\in V\setminus \{x,y\}$ such that $f^R(x,y)=f^R(y,z)=f^R(x,z)=m(r)$.  If $r$ is even, part (1) of Corollary \ref{keycor} implies $c^R(xy)=[m(r)-1,r]$, so (i) holds.  If $r$ is odd, part (2) of Corollary \ref{keycor} implies $c^R(xy)\subseteq [m(r)-1,r]$.  Recall that since $r$ is odd, $|[m(r)-1,r]|=m(r)+1$.  Therefore, since $|c^R(xy)|=f^R(x,y)=m(r)$ and $c^R(xy)\subseteq [m(r)-1,r]$, $m(r)-1\notin c^R(xy)$ implies $c^R(xy)= [m(r),r]$, and $r\notin c^R(xy)$ implies $c^R(xy)=[m(r)-1,r-1]$, so (ii) holds.
\end{proof}

\noindent{\bf Proof of Theorem \ref{stabthm}}. Let $r\geq3$ be an integer, and fix $\delta>0$.  Let $C_1$, $C_2$, $C_3$ be as in Lemma \ref{boundnonf(ij)=m(r)}. We will consider the cases when $r$ is even and odd separately.

\underline{Case $1$}: $r$ is even.  Fix $0<\epsilon<1$ small enough so that $\max\{\sqrt{\epsilon}C_3, (C_1+C_2)\epsilon \}<\frac{\delta}{2}$. Apply Lemma \ref{boundnonf(ij)=m(r)} to $\epsilon$ to obtain $M_1$, and apply Lemma \ref{keylem} to $\epsilon$ to obtain $M_2$.  Fix  $M>\max \{ M_1,M_2\}$ large enough so that $t>M$ implies $\frac{t^2}{t-1}\leq 2t$.  Fix $t>M$ and $R\in \tilde{M}_r(t)$ such that $W(R)\geq m(r)^{(1-\epsilon){t\choose 2}}$.  Set $V=[t]$.  Let $R'$ be the unique element of $\tilde{C}_r(t)$, that is, $R'$ is the complete $r$-graph with vertex set $V$ such that for all $xy \in {V\choose 2}$, $c^{R'}(xy)=[m(r)-1,r]$.  We show $|\Delta(R,R')|\leq \delta t^2$.

Let $V_0=\{u\in V: \mu^R_{m(r)}(u)<(1-\sqrt{\epsilon})(t-1)\}$ and $V_1 = V\setminus V_0$.  Note that this is the same definition of $V_0$ used in Lemma \ref{keylem}.  Define $A = E(V_0,V) \cup \{xy \in {V\choose 2} : f^R(x,y)\neq m(r)\}$.  Suppose $xy \in {V\choose 2}\setminus A$.  Then $xy\in {V\choose 2}\setminus E(V,V_0)$ and $f^R(x,y)=m(r)$, so Lemma \ref{keylem} (i) implies $c^R(xy)=[m(r)-1,r]$.  Thus $c^R(xy)=c^{R'}(xy)$ and $xy \notin \Delta(R,R')$.  We have shown $\Delta(R,R')\subseteq A$, and consequently $|\Delta(R',R)|\leq |A|$.  

We now bound $|A|$.  The definition of $A$ and parts (1), (2), and (3) of Lemma \ref{boundnonf(ij)=m(r)} imply
$$
|A|\leq |V||V_0|+a_R +b_R \leq (\sqrt{\epsilon}C_3 + (C_1+C_2)\epsilon )t^2.
$$
By assumption on $\epsilon$, $(\sqrt{\epsilon}C_3 + (C_1+C_2)\epsilon )t^2<(\frac{\delta}{2}+\frac{\delta}{2})t^2 = \delta t^2$, and consequently, $|\Delta(R,R')|\leq \delta t^2$ as desired.

\underline{Case 2}: $r$ is odd.  Fix $0<\epsilon<1$ small enough so that $\max \{\sqrt{\epsilon}C_3, (C_1+C_2)\epsilon, 2\sqrt{\epsilon}\}<\frac{\delta}{5}$.  Apply Lemma \ref{boundnonf(ij)=m(r)} to $\epsilon$ to obtain $M_1$ and apply Lemma \ref{keylem} to $\epsilon$ to obtain $M_2$.  Choose $M>\max\{M_1,M_2\}$ large enough so that $t>M$ implies $\frac{2}{\sqrt{\epsilon}t}<\frac{\delta}{5}$, $\sqrt{\epsilon}t^2+t\leq 2\sqrt{\epsilon}t^2$, and $ \frac{t^2}{t-1}<2t$.  Fix $t>M$ and $R\in \tilde{M}_r(t)$ such that $W(R)\geq m(r)^{(1-\epsilon){t\choose 2}}$ and set $V=[t]$.  We construct an element $R'\in \tilde{C}_r(t)$,  then show $|\Delta(R,R')|\leq \delta t^2$.  First we choose a partition $V_0, V_1,\ldots, V_l, \ldots, V_k$ of $V$ with the following properties: 
\begin{enumerate}[$\bullet$]
\item $|V_0|\leq \sqrt{\epsilon}C_3 t$,
\item If $0<l$, then for each $1\leq i\leq l$, there is $u_i\in V$ and $B_i\subseteq V$ such that  $V_i=(N^R_{m(r)-1}(u_{i}) \cap B_i)\cup \{u_{i}\}$,
\item If $l<k$, then $V_{l+1},\ldots, V_k$ are singletons.
\end{enumerate}
\underline{Step $1$}: Let $V_0=\{u\in V: \mu^R_{m(r)}(u)<(1-\sqrt{\epsilon})(t-1)\}$.  Note that part (3) of Lemma \ref{boundnonf(ij)=m(r)} implies $|V_0|\leq \sqrt{\epsilon}C_3 t$.  Define $B_1 = V\setminus V_0$.  If there exists $u\in B_1$ such that $|N^{R}_{m(r)-1}(u)\cap B_1|\geq \sqrt{\epsilon}(t-1)$, then choose $u_{1}$ to be any $u\in B_1$ with $|N^{R}_{m(r)-1}(u)\cap B_1|$ maximal, and set $V_{1} = (N^R_{m(r)-1}(u_{1}) \cap B_1)\cup \{u_{1}\}$.  If $V\setminus (V_0\cup V_1)=\emptyset$, set $k=l=1$ and end the construction.  If not, go to step 2.  

If no $u$ exists in $B_1$ such that $|N^{R}_{m(r)-1}(u)\cap B_1|\geq \sqrt{\epsilon}(t-1)$, then put each element of $B_1$ into its own part and end the construction.  In particular, set $l=0$, $k=t-|V_0|$, and let $V_2,\ldots, V_k$ partition $B_1$ into singletons.
\underline{Step $i+1$}: Suppose $i\geq 1$ and we have chosen $V_i, B_i$, and $u_i$ such that $V_i=(N^R_{m(r)-1}(u_{i}) \cap B_i)\cup \{u_{i}\}$ and $V\setminus \bigcup_{j=0}^iV_j \neq \emptyset$. Set $B_{i+1}=V\setminus \bigcup_{j=0}^i V_j$.  If there exists $u\in B_{i+1}$ such that $|N^R_{m(r)-1}(u) \cap B_{i+1}|\geq \sqrt{\epsilon}(t-1)$, choose $u_{i+1}$ to be any $u\in B_{i+1}$ with $|N^R_{m(r)-1}(u) \cap B_{i+1}|$ maximal, and set $V_{i+1} = (N^R_{m(r)-1}(u_{i+1}) \cap B_{i+1})\cup \{u_{i+1}\}$. If $V\setminus \bigcup_{j=0}^{i+1} V_j = \emptyset$, set $k=l=i+1$ and end the construction.   Otherwise go to step $i+2$.

If no $u$ exists in $B_{i+1}$ such that $|N^R_{m(r)-1}(u) \cap B_{i+1}|\geq \sqrt{\epsilon}(t-1)$, then put each element of $B_{i+1}$ into its own part and end the construction.  In particular, set $l=i$, $k=t-|\bigcup_{j=0}^i V_i|$, and let $V_{i+1},\ldots, V_k$ partition $B_{i+1}$ into singletons.

This completes the construction of the partition $V_0, V_1,\ldots, V_l,\ldots, V_k$.  Given $xy\in {V\choose 2}$, define
\[ c^{R'}(xy) = \begin{cases}
[m(r)-1,r-1] & \textnormal{ if } xy\in {V_i\choose 2} \textnormal{ some }0\leq i\leq l \\
[m(r),r] & \textnormal{ otherwise}.
\end{cases}
\]
This completes our construction of $R'$.  We now bound $|\Delta(R,R')|$.  Set 
$$
A = E(V_0,V)\cup \Bigg\{xy\in {V\choose 2}: f^R(x,y)\neq m(r)\Bigg\}\cup \bigcup_{i=1}^l E(\{u_i\},V) \cup E(V_i, V\setminus \Gamma^R_{m(r)}(u_i)).
$$
We first bound $|A|$, then $|\Delta(R,R')\setminus A|$.  By parts (1), (2), and (3) of Lemma \ref{boundnonf(ij)=m(r)}, 
$$
\Bigg|E(V_0,V)\cup \Bigg\{xy\in {V\choose 2}: f^R(x,y)\neq m(r)\Bigg\} \Bigg|\leq |V||V_0|+a_R+b_R \leq C_3\sqrt{\epsilon}t^2+ C_1\epsilon t^2+C_2\epsilon t^2.
$$
By construction, for each $1\leq i\leq l$, $|V_i|\geq \sqrt{\epsilon}(t-1)$, therefore $l\leq \frac{t}{\sqrt{\epsilon}(t-1)}$.  Thus 
$$
\Bigg|\bigcup_{i=1}^l E(\{u_i\},V)\Bigg|\leq lt \leq \frac{t^2}{\sqrt{\epsilon}(t-1)} \leq \frac{2t}{\sqrt{\epsilon}},
$$
where the last inequality is by assumption on $M$.  By construction, for each $1\leq i\leq l$, $u_i\notin V_0$ implies $|V\setminus \Gamma^R_{m(r)}(u_i)|\leq \sqrt{\epsilon}(t-1)+1$. Therefore 
$$
\Bigg|\bigcup_{i=1}^l E(V_i,V\setminus \Gamma^R_{m(r)}(u_i))\Bigg|\leq \sum_{i=1}^l |V_i||V\setminus \Gamma^R_{m(r)}(u_i)| \leq (\sqrt{\epsilon}(t-1)+1)\sum_{i=1}^l |V_i| \leq (\sqrt{\epsilon}(t-1)+1) t\leq 2\sqrt{\epsilon}t^2,
$$
where the last inequality is by assumption on $M$.  Combining all of this yields that 
$$
|A|\leq \Bigg(\sqrt{\epsilon}C_3+ (C_1+C_2)\epsilon + \frac{2}{\sqrt{\epsilon}t}+2\sqrt{\epsilon}\Bigg) t^2.
$$
We now bound $|\Delta(R,R')\setminus A|$. An edge $xy\in \Delta(R,R')\setminus A$ is contained in one of the following:
\begin{enumerate}[$\bullet$]
\item $X=\bigcup_{l+1\leq i<j\leq k} \{xy\in E(V_i,V_j)\setminus A: c^R(x,y)\neq [m(r), r]\}$.
\item For some $1\leq i\leq l$, $Y_{i}=\{xy \in E(V_i) \setminus A :c^R(x,y)\neq [m(r)-1,r-1]\}$.
\item For some $1\leq i< j\leq l$, $Z_{ij}=\{xy \in E(V_i,V_j)\setminus A : c^R(x,y)\neq [m(r),r]\}$.
\item For some $1\leq i\leq l<j\leq k$, $W_{ij}=\{ xy\in E(V_i,V_j)\setminus A: c^R(x,y)\neq [m(r),r]\}$.
\end{enumerate}
We now bound $|X|$.  Define $\mathcal{G}$ to be the graph with vertex set $\mathcal{V}= \bigcup_{j=l+1}^k V_j$ and edge set 
$$
\mathcal{E}=\{xy\in {\mathcal{V}\choose 2}: m(r)-1\in c^R(xy)\}.
$$
By definition of $X$, for all $xy\in X$ we have $xy\in {V\choose 2}\setminus E(V_0,V)$, $f^R(x,y) = m(r)$, and  $c^R(xy)\neq [m(r),r]$, so Lemma \ref{keylem} (ii)(b) implies  $m(r)-1\in c^R(xy)$, and therefore $X\subseteq \mathcal{E}$.  By construction, for all $u\in \mathcal{V}$, $\mathcal{DEG}(v) = |N^{R}_{m(r)-1}(u) \cap \mathcal{V}| < \sqrt{\epsilon}(t-1)$, thus
$$
|X|\leq |\mathcal{E}|=\frac{1}{2}\sum_{v\in V}\mathcal{DEG}(v) < \frac{\sqrt{\epsilon}t^2}{2}.
$$
We now show each $Y_i$ is empty. If $l=0$ this is vacuous, so assume $l\geq 1$.  Suppose for a contradiction that for some $1\leq i\leq l$, $Y_i\neq \emptyset$.  Then there is $xy\in E(V_i)$ such that $f^R(x,y)=f^R(x,u_i)=f^R(y,u_i)=m(r)$ and $c^R(xy) \neq [m(r)-1,r-1]$.  By Lemma \ref{keylem} (ii)(a), $r\in c^R(xy)$.  But by construction, $m(r)-1\in c^R(u_ix)\cap c^R(u_iy)$.  Now $(r,m(r)-1,m(r)-1)\in c^R(xy)\times c^R(u_ix)\times c^R(u_iy)$ is a violating triple, making $\{x,y,u_i\}$ a violating triangle, a contradiction.  

We now show each $Z_{ij}$ is empty.  If $l<2$ this is trivial, so assume $l\geq 2$.  Suppose for a contradiction that for some $1\leq i<j\leq l$, there is $xy \in Z_{ij}$, say with $x\in V_i$, $y\in V_j$.  Then $f^R(x,y)=f^R(u_i,y)=f^R(u_i,x)=m(r)$ and $c^R(xy)\neq [m(r), r]$.  By Lemma \ref{keylem} (ii)(b), $m(r)-1 \in c^R(xy)$, and by construction $m(r)-1\in c^R(xu_i)$.  Also by construction, $m(r)-1\notin c^R(u_iy)$, so Lemma \ref{keylem} (ii)(b) implies $c^R(u_iy)=[m(r), r]$. But now $(r,m(r)-1,m(r)-1)\in c^R(u_iy)\times c^R(u_ix)\times c^R(xy)$ is a violating triple, making $\{u_i,x,y\}$ a violating triangle, a contradiction.

We now show each $W_{ij}=\emptyset$.  If $l=0$ or $k=l$, this is vacuous, so assume $1\leq l<k$.  Fix $1\leq i\leq l$ and $l+1\leq j\leq k$ and suppose for a contradiction there is $xy\in W_{ij}$, say with $x\in V_i$, $y\in V_j$.  Then $f^R(x,y)=f^R(u_i,y)=f^R(u_i,x)=m(r)$ and $c^R(xy)\neq [m(r),r]$.  By Lemma \ref{keylem} (ii)(b) $m(r)-1\in c^R(xy)$, and by construction $m(r)-1\in c^R(xu_i)$. Also by construction, $m(r)-1\notin c^R(u_iy)$, so Lemma \ref{keylem} (ii)(b) implies that $c^R(u_iy) =[m(r),r]$.  But now $(r,m(r)-1,m(r)-1)\in c^R(u_iy)\times c^R(u_ix)\times c^R(xy)$ is a violating triple, making $\{u_i,x,y\}$ a violating triangle, a contradiction.

Combining all of this yields that $|\Delta(R,R')\setminus A|\leq  \frac{\sqrt{\epsilon}t^2}{2}$, so 
$$
|\Delta(R,R')|\leq (\sqrt{\epsilon}C_3+ (C_1+C_2)\epsilon + \frac{2}{\sqrt{\epsilon}t}+2\sqrt{\epsilon} + \frac{\sqrt{\epsilon}}{2})t^2.
$$
By our assumptions on $\epsilon$ and because $\frac{2}{\sqrt{\epsilon}t}<\frac{\delta}{5}$, $(\sqrt{\epsilon}C_3+ (C_1+C_2)\epsilon + \frac{2}{\sqrt{\epsilon}t}+2\sqrt{\epsilon} + \frac{\sqrt{\epsilon}}{2})t^2< 5\frac{\delta}{5} = \delta$, and $|\Delta(R,R')| <\delta t^2$ as desired.  
\qed

\section{Proof of Theorem \ref{deltaclosethm}}\label{provingtheorem3}

\setcounter{theorem}{0}
\numberwithin{theorem}{section}

In this section we prove Theorem \ref{deltaclosethm}, which says that for all integers $r\geq 3$ and all $\delta>0$, almost all elements of $M_r(n)$ are $\delta$-close to $C_r(n)$.  We begin with some key definitions.  For $n,r,s\geq 3$ integers, and $\delta, \eta, d, \epsilon \geq 0$, set
\begin{align*}
\tilde{C}^{\delta}_r(t) &= \{R\in \tilde{M}_r(t): R\textnormal{ is }\delta\textnormal{-close to }\tilde{C}_r(t)\}, \\
D_r(n, \delta, \eta, d) &= \{G\in M_r(n): Q_{\eta,d}(G)\neq \emptyset \textnormal{ and for all }R\in Q_{\eta, d}(G), R\in \tilde{C}^{\delta}_r(t)\textnormal{ where }t=|V(R)| \},\\
\tilde{E}_r(s, \epsilon) &= \{R\in \tilde{M}_r(s): W(R)\geq m(r)^{(1-\epsilon){s\choose 2}}\},\textnormal{ and }\\
E_r(n,\epsilon, \eta, d) &=\{G\in M_r(n): \textnormal{ for all }R\in Q_{\eta,d}(G), R\in \tilde{E}_r(t, \epsilon)\textnormal{ where } t=|V(R)|\}, 
\end{align*}
and recall that $C_r^{\delta}(n) =\{G\in M_r(n): G\textnormal{ is }\delta \textnormal{-close to }C_r(n)\}$.  Theorem \ref{deltaclosethm} follows from two lemmas that we now prove.  The first lemma below informally states that $r$-graphs in $M_r(n)$ with reduced $r$-graphs close to $\tilde{C}_r(t)$ are themselves close to $C_r(n)$. 

\begin{lemma}\label{lemma5}
Let $r,n\geq 3$ be integers.  For all $\delta>0$, there is $d_0$ such that for all $d\leq d_0$ and $\eta\leq \gamma_{el}(d,3)$, 
$$
D_r(n, \delta/2, \eta, d)\subseteq C^{\delta}_r(n).
$$

\end{lemma}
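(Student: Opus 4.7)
The plan is to use the hypothesis that every reduced $r$-graph of $G$ is close to $\tilde{C}_r(t)$ to explicitly build an approximant $G' \in C_r(n)$ out of a regular partition of $G$, and then bound $|\Delta(G,G')|$ by splitting the edges into four natural groups. For $n$ too small to admit an $\eta$-regular equipartition of order at least $1/\eta$, Lemma \ref{Qnonempty} forces $Q_{\eta,d}(G)=\emptyset$, so $D_r(n,\delta/2,\eta,d)=\emptyset$ and the inclusion is vacuous; we therefore tacitly assume $n$ is large enough that a suitable partition exists.

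First I would fix $G \in D_r(n, \delta/2, \eta, d)$, pick a witnessing $\eta$-regular equipartition $\mathcal{P}=\{V_1,\ldots,V_t\}$ of $V(G)$ with $t \geq 1/\eta$, and set $R := R(G,\mathcal{P},d)$. By hypothesis there is some $R' \in \tilde{C}_r(t)$ with $|\Delta(R,R')| \leq \tfrac{\delta}{2} t^2$. When $r$ is odd, $R'$ comes equipped with a partition $[t]=W_1 \cup \cdots \cup W_s$; lift this to $[n]=U_1 \cup \cdots \cup U_s$ by $U_l := \bigcup_{i\in W_l} V_i$. Now define $G'$ on $[n]$ as follows. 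For each pair $xy$ with $x\in V_i,\,y\in V_j$ and $i\neq j$, if $d^G(xy) \in c^{R'}(ij)$ let $c^{G'}(xy) = \{d^G(xy)\}$; otherwise assign to $c^{G'}(xy)$ any fixed color from $c^{R'}(ij)$. For intra-class edges ($i=j$), assign any fixed color from the appropriate metric set ($[r/2,r]$ for even $r$, or $[\tfrac{r-1}{2},r-1]$ for odd $r$, noting $V_i \subseteq U_l$ for the unique $l$ with $i\in W_l$). This construction ensures $G' \in C_r(n)$: for even $r$ all colors lie in the metric set $[r/2,r]$, and for odd $r$ the partition $U_1,\ldots,U_s$ and the color assignments exactly realize the defining structure of $C_r(n)$.

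Next I would bound $|\Delta(G, G')|$ by partitioning the edges of $\Delta(G,G')$ into four groups. \textbf{(i)} Intra-class edges: at most $\sum_i \binom{|V_i|}{2} \leq n^2/(2t)$. \textbf{(ii)} Inter-class edges lying in pairs $(V_i,V_j)$ that are not $\eta$-regular for $G$: at most $\eta t^2$ such pairs contribute $\leq \eta n^2$. \textbf{(iii)} Inter-class edges lying in $\eta$-regular pairs $(V_i,V_j)$ with $c^R(ij)\neq c^{R'}(ij)$: there are at most $\tfrac{\delta}{2} t^2$ such pairs, contributing at most $\tfrac{\delta}{2}n^2$ (up to a lower-order additive term from $|V_i|\leq n/t+1$). \textbf{(iv)} Inter-class edges $xy\in E(V_i,V_j)$ lying in $\eta$-regular pairs with $c^R(ij)=c^{R'}(ij)$ but $d^G(xy)\notin c^{R'}(ij)$: by the very definition of $R$, for each color $k\notin c^R(ij)$ one has $\rho^G_k(V_i,V_j)<d$, so the number of such edges per pair is at most $rd\,|V_i||V_j|$, summing to at most $rd\,n^2$. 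Choosing $d_0 := \delta/(8r)$ forces the contribution of (iv) to be $\leq \delta n^2/8$; since $\eta \leq \gamma_{el}(d,3)\leq d \leq d_0$, the contributions of (i) and (ii) are each $\leq \delta n^2/8$ for $n$ large (using $t \geq 1/\eta \geq 1/d_0$). Summing all four groups gives $|\Delta(G,G')|\leq \delta n^2$, so $G\in C^\delta_r(n)$.

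The conceptual work is really in the two preceding results (the stability theorem and the regularity/embedding apparatus); the present lemma is an unpacking of closeness at the reduced level into closeness at the original level. The step that requires the most care is (iv), where one must verify that the construction of $G'$ does not introduce color mismatches beyond what the density hypothesis on $R$ already controls, and, in the odd case, that the lifted partition $\{U_l\}$ is compatible with the inter-part/intra-part coloring distinction inherited from $R'$. Once the bookkeeping is set up as above, both are routine consequences of the definitions of $R(G,\mathcal{P},d)$ and of $\tilde{C}_r(t)$.
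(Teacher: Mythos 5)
Your proposal is correct and takes essentially the same approach as the paper: build $G'$ from a witnessing regular partition by modifying only edges that clash with the colors of an approximating $R'\in\tilde{C}_r(t)$, then split $\Delta(G,G')$ into intra-class edges, edges in irregular pairs, edges in pairs where $c^R\neq c^{R'}$, and edges of low density color. The only cosmetic differences are that the paper merges the irregular pairs and the $c^R\neq c^{R'}$ pairs into a single exceptional set $A$ and hard-codes the color $r-1$ (which lies in $U_r\cap L_r$) on all recolored edges, whereas you recolor more conservatively (keeping $d^G$ whenever it already lies in $c^{R'}(ij)$) and track the four groups separately; both yield the same $O((\eta+\delta/2+rd)n^2)$ bound, just with a slightly different but equally valid choice of $d_0$.
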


\begin{proof}
Fix $\delta>0$ and set $d_0=\frac{\delta}{2(r+2)}$.  Fix $d\leq d_0$ and $\eta \leq \gamma_{el}(d,3)$, and suppose $G\in D_r(n, \delta/2, \eta, d)$.   Then by definition of $D_r(n, \delta/2, \eta, d)$, $G\in M_r(n)$ and there is $R \in Q_{\eta, d}(G)$ which is $\frac{\delta}{2}$-close to $\tilde{C}_r(t)$ where $t=|V(R)|$.  Let $R'\in \tilde{C}_r(t)$ be such that $R$ is $\frac{\delta}{2}$-close to $R'$.  We will build an element $G'\in C_r(n)$ such that $G$ is $\delta$-close to $G'$. 

Let $\mathcal{P}=\{V_1,\ldots, V_t\}$ be an $\eta$-regular partition for $G$ such that $R=R(G,\mathcal{P},d)$.   Define 
$$
A=\Delta(R,R') \cup \{ij\in {[t]\choose 2}: (V_i, V_j)\textnormal{ is not }\eta\textnormal{-regular for }G\}.
$$
Note that $|A| \leq \frac{\delta}{2} t^2 +\eta t^2$.  
Define $G'$ by $V(G')=V(G)=[n]$ and for $xy\in {[n]\choose 2}$, 
\[
d^{G'}(x,y)= \begin{cases}
r-1 & \textnormal{ if }xy\in E(V_i)\textnormal{ for some }i\in [t]\\
r-1 & \textnormal{ if }xy\in E(V_i,V_j) \textnormal{ for some }ij\in {[t]\choose 2}\textnormal{ such that either }ij\in A\textnormal{ or }d^G(x,y) \notin c^{R'}(ij)\\
d^G(x,y)& \textnormal{ if }xy\in E(V_i,V_j) \textnormal{ for some } ij\in {[t]\choose 2}\setminus A \textnormal{ and }d^G(x,y) \in c^{R'}(ij).
\end{cases}
\]
Set 
$$
\begin{array}{ccc}
U_r = \begin{cases}
[m(r),r] & \textnormal{ if }r\textnormal{ is odd}\\
[m(r)-1,r] & \textnormal{ if }r\textnormal{ is even}
\end{cases}
 &\quad \text{and} \quad &
L_r = \begin{cases}
[m(r)-1,r-1] & \textnormal{ if }r\textnormal{ is odd}\\
[m(r)-1,r] & \textnormal{ if }r\textnormal{ is even}.
\end{cases}
\end{array}
$$
Note that $r-1\in U_r\cap L_r$.  By definition of $\tilde{C}_r(t)$, there is a partition $\tilde{W}_1,\ldots, \tilde{W}_s$ of $[t]$ such that for all $ij\in {[t]\choose 2}$,
\[
c^{R'}(ij)=\begin{cases}
L_r & \textnormal{ if } ij\in E(\tilde{W}_u) \textnormal{ some }u\in [s]\\
U_r & \textnormal{ if } ij\in E(\tilde{W_u},\tilde{W}_v) \textnormal{ some }uv\in {[s]\choose 2}.
\end{cases}
\]
Define a new partition $W_1,\ldots, W_s$ of $[n]$ by setting $W_u = \bigcup_{i\in W_u} V_i$ for each $u\in [s]$.  Then by construction, for all $xy\in {[n]\choose 2}$, 
\[
d^{G'}(x,y)\in \begin{cases}
L_r & \textnormal{ if } xy\in E(W_u) \textnormal{ some }u\in [s]\\
U_r & \textnormal{ if } xy\in E(W_u,W_v) \textnormal{ some }uv\in {[s]\choose 2}.
\end{cases}
\]
Therefore, $G'\in C_r(n)$ by definition. We now show $|\Delta(G,G')|\leq \delta n^2$.  Recall that by definition of $Q_{\eta,d}(G)$, $\frac{1}{\eta}\leq t$.  Edges $xy\in \Delta(G,G')$ fall into the following categories:
\begin{enumerate}[$\bullet$]
\item $xy\in E(V_i)$ some $i\in [t]$.  There are at most $t (\frac{n^2}{2t^2}) = \frac{n^2}{2t}< \eta n^2$ such edges.
\item $xy \in E(V_i,V_j)$ some $ij\in A$.  There are at most $|A|\frac{n^2}{t^2}\leq (\frac{\delta}{2} t^2+\eta t^2)\frac{n^2}{t^2} = (\frac{\delta}{2}+\eta)n^2$ such edges.
\item $xy \in E(V_i,V_j)$ some $ij\in {[t]\choose 2} \setminus A$ such that $d^G(x,y)\notin c^{R'}(ij)$.  This means $(V_i,V_j)$ is $\eta$-regular for $G$ and $c^{R'}(ij)=c^R(ij)$.  Because $R=R(G,\mathcal{P},d)$, for each $l\in [r]\setminus c^R(ij)$ we have $e^G_l(V_i,V_j) \leq d|V_i||V_j|$.  Therefore the number of such edges is at most $d\frac{n^2}{t^2}r{t\choose 2}< drn^2$.
\end{enumerate}

Combining these bounds with the fact that $\eta\leq d\leq d_0 = \frac{\delta}{2(r+2)}$ yields
$$
|\Delta(G,G')|\leq n^2(2\eta + \frac{\delta}{2}+ dr) \leq n^2(2d_0+\frac{\delta}{2} + d_0r) =n^2(\frac{\delta}{2}+d_0(r+2)) = \delta n^2.
$$
\end{proof}

We now prove the second lemma.  Informally, it says that most graphs in $M_r(n)$ have all their reduced graphs $R$ with $W(R)$ quite large.
\begin{lemma}\label{lemma3}
For all $\epsilon>0$, there is $\beta=\beta(\epsilon)$ and $d_0=d_0(\epsilon)>0$, such that for all $d\leq d_0$ and $\eta\leq \gamma_{el}(d,3)$, there is $M$ such that $n\geq M$ implies
\begin{eqnarray}\label{explicit*}
\frac{|M_r(n)\setminus E_r(n,\epsilon,\eta, d)|}{m(r)^{n\choose 2}} \leq 2^{-\beta n^2}.
\end{eqnarray}
\end{lemma}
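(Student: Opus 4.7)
The plan is a standard counting argument driven by the multi-color regularity lemma (Theorem \ref{Szlem}). Unwinding the definition of $E_r(n,\epsilon,\eta,d)$, every $G\in M_r(n)\setminus E_r(n,\epsilon,\eta,d)$ admits some integer $t$ with $1/\eta\leq t\leq CM(1/\eta,\eta)$ and some $\eta$-regular equipartition $\mathcal{P}$ of $[n]$ of order $t$ such that $R:=R(G,\mathcal{P},d)$ satisfies $W(R)<m(r)^{(1-\epsilon){t\choose 2}}$. I will upper bound $|M_r(n)\setminus E_r(n,\epsilon,\eta,d)|$ by summing, over all triples $(t,\mathcal{P},R)$ of this form, the number of $G\in M_r(n)$ for which $R(G,\mathcal{P},d)=R$.

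For a fixed triple, I count compatible colorings $d^G:{[n]\choose 2}\to[r]$ pair by pair. For the at most $\eta t^2$ pairs $ij$ with $(V_i,V_j)$ declared non-regular by $R$ I impose no restriction and allow all $r^{|V_i||V_j|}$ colorings. For the remaining pairs, the definition of $R(G,\mathcal{P},d)$ forces $\rho^G_l(V_i,V_j)<d$ for every $l\notin c^R(ij)$, so fewer than $dr|V_i||V_j|$ edges of $E(V_i,V_j)$ carry colors outside $c^R(ij)$, and the number of such colorings is at most
$$
f^R(i,j)^{|V_i||V_j|}\,{|V_i||V_j|\choose \leq dr|V_i||V_j|}\,r^{dr|V_i||V_j|}\ \leq\ f^R(i,j)^{|V_i||V_j|}\bigl(2^{H(dr)}r^{dr}\bigr)^{|V_i||V_j|},
$$
where $H$ denotes binary entropy (and I take $d_0\leq 1/(2r)$ so the standard bound for $\binom{n}{\leq k}$ applies). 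Multiplying across pairs, using $|V_i|\leq n/t+1$ together with $\sum_{ij\in{[t]\choose 2}}|V_i||V_j|\log f^R(i,j)\leq (n/t+1)^2\log W(R)$ (valid since $f^R(i,j)\geq 1$), the number of compatible $G$ is at most
$$
W(R)^{(n/t+1)^2}\cdot\bigl(2^{H(dr)}r^{dr}\bigr)^{n^2/2}\cdot r^{\eta n^2}.
$$

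Since $W(R)<m(r)^{(1-\epsilon){t\choose 2}}$ and $(n/t+1)^2{t\choose 2}\leq {n\choose 2}+O_{\eta}(n)$, the first factor is at most $m(r)^{(1-\epsilon){n\choose 2}+O_{r,\eta}(n)}$. Summing over the $CM(1/\eta,\eta)$ values of $t$, the at most $t^n$ equipartitions of order $t$, and the at most $2^{r{t\choose 2}}$ reduced $r$-graphs $R$—all factors of size $2^{O_{r,\eta}(n)}$—and dividing by $m(r)^{{n\choose 2}}$, the ratio in (\ref{explicit*}) is bounded by
$$
2^{O_{r,\eta}(n)}\cdot m(r)^{-\epsilon{n\choose 2}}\cdot\bigl(2^{H(dr)}r^{dr}\bigr)^{n^2/2}\cdot r^{\eta n^2}.
$$
Because Theorem \ref{EL} forces $\gamma_{el}(d,3)\leq d$, the hypothesis $\eta\leq\gamma_{el}(d,3)$ gives $\eta\leq d$, so each of $H(dr)$, $dr\log_2 r$, and $\eta\log_2 r$ tends to $0$ as $d\to 0$. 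I will pick $d_0=d_0(\epsilon)$ so small that $H(dr)+dr\log_2 r+2\eta\log_2 r<\tfrac{\epsilon}{2}\log_2 m(r)$ for all $d\leq d_0$ and $\eta\leq\gamma_{el}(d,3)$. The displayed bound then becomes $2^{O_{r,\eta}(n)}m(r)^{-\epsilon n^2/4}$, which for $n$ larger than some $M=M(\epsilon,r,d,\eta)$ is at most $2^{-\beta n^2}$ for any constant $\beta<\tfrac{\epsilon}{4}\log_2 m(r)$.

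The main obstacle is the book-keeping: verifying that the three error factors—$r^{\eta n^2}$ from non-regular pairs, $(2^{H(dr)}r^{dr})^{n^2/2}$ from edges with colors outside the density-$d$ palette, and $2^{O_{r,\eta}(n)}$ from the choice of $t$, $\mathcal{P}$, and $R$—are all absorbed into the exponential savings $m(r)^{-\epsilon{n\choose 2}}$ supplied by the hypothesis $W(R)<m(r)^{(1-\epsilon){t\choose 2}}$. This is purely a matter of calibrating $d_0$ as a function of $\epsilon$ and $r$ (and then $M$ as a function of $d_0$, $\eta$, and $r$).
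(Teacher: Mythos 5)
Your proposal is correct and follows essentially the same route as the paper: apply the multi-color regularity lemma, sum over reduced $r$-graphs $R$ with $W(R) < m(r)^{(1-\epsilon)\binom{t}{2}}$, and show the entropy-type error factors from non-regular pairs and low-density colors are dominated by the saving $m(r)^{-\epsilon\binom{n}{2}}$. The only (cosmetic) differences are that you bundle all colors outside $c^R(ij)$ into a single $\le dr|V_i||V_j|$-edge exceptional set rather than treating each color separately, and you read off the non-regular pairs from $R$ itself (valid once $d<1/r$) instead of choosing the set $J$ of irregular pairs as a separate step.
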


\begin{proof}
 All logs in this proof are base $2$.  Fix $\epsilon>0$ and set $\beta=\frac{\epsilon \log m(r)}{8}$.  Define 
$$
F(x) = \frac{3x}{2}\log r +r(H(x)+x) -2\beta,
$$ and choose $d_0<\frac{1}{2}$ small enough so that $F(d_0) < -\beta$.  Recall that for $0\leq y\leq x\leq \frac{1}{2}$, $H(y)\leq H(x)$, so for any $0\leq y\leq x\leq d_0$, $F(y)\leq F(x)$.  Fix $d\leq d_0$ and $\eta\leq \gamma_{el}(d,3)\leq d$.  Set $N=CM(\frac{1}{\eta},\eta)$ and define 
\begin{enumerate}[ ]
\item $C=\log(N-\frac{1}{\eta}+1) + \log r {N\choose 2} + (H(\eta)+\eta)N^2$,
\item $C' = \log N + \frac{\log m(r)}{2}$, and 
\item $C''= \frac{3\eta}{2}\log r + r(H(d)+d) - 4\beta $.   
\end{enumerate}
For any integer $n$, define
\begin{enumerate}[ ]
\item $S(n)=n\log(N)+(H(\eta) +\eta) N^2 + (\frac{3\eta}{2}\log r) n^2 + r(H(d)+d)n^2$,
\item $S'(n)= S(n)+\log(N- \frac{1}{\eta}+1)+{N\choose 2}\log r$, and 
\item $S''(n)= S'(n)-4\beta n^2+\frac{\log m(r)}{2}n$.   
\end{enumerate}
Notice that $S''(n)=C+C'n+C''n^2$ and $C''=F(\eta) -2\beta$.  Choose $M\geq N$ large enough so that $n>M$ implies $S''(n) < (C''+2\beta) n^2=F(\eta)n^2$.  We show $n>M$ implies (\ref{explicit*}) holds.  Fix $n>M$.  Our assumptions on $\eta \leq d_0$ and $M$ imply $S''(n)<F(\eta)n^2\leq F(d_0)n^2<-\beta n^2$, so it suffices to show
\begin{eqnarray}\label{**}
\frac{|M_r(n)\setminus E_r(n, \epsilon, \eta, d)|}{m(r)^{n\choose 2}} \leq 2^{S''(n)}.
\end{eqnarray}
By definition of $E(n,\epsilon, \eta, d)$, we have $G\in M_r(n)\setminus E_r(n, \epsilon, \eta, d)$ if and only if there is $\frac{1}{\eta}\leq t\leq N$ and $R\in \tilde{M}_r(t)$ such that $R\in Q_{\eta,d}(G)$ and $W(R)<m(r)^{(1-\epsilon){t\choose 2}}$.  We give an upper bound for the number of such $G$.  

Fix some $\frac{1}{\eta}\leq t\leq N$ and $R\in \tilde{M}_r(t)$ such that $W(R)<m(r)^{(1-\epsilon){t\choose 2}}$.  All $G\in M_r(n)$ such that $R\in Q_{\eta,d}(G)$ can be constructed as follows:
\begin{enumerate}[$\bullet$]
\item Choose an equipartition of $[n]$ into $t$ pieces $V_1,\ldots, V_t$. There are at most $t^n\leq N^n$ such partitions.  Note that for each $i\in [t]$, $|V_i|\leq \eta n$.
\item Choose $J\subseteq {[t]\choose 2}$ to be the set of $ij$ such that $(V_i,V_j)$ is not $\eta$-regular for $G$. There are at most ${{t\choose 2}\choose \eta {t\choose 2}} 2^{\eta {t\choose 2}} \leq 2^{H(\eta)t^2+\eta t^2}\leq 2^{(H(\eta)+\eta)N^2}$ ways to do this.
\item Choose $d^G(x,y)$ for each $xy\in E(V_i)$ and $i\in [t]$.  There are at most $r^{t(\frac{n^2}{2t^2})}=r^{\frac{n^2}{2t}}\leq  r^{\frac{\eta}{2} n^2}$ ways to do this.
\item Choose $d^G(x,y)$ for each $xy\in E(V_i,V_j)$ where $ij\in J$. There are at most $(r^{\frac{n^2}{t^2}})^{\eta t^2} =r^{\eta n^2}$ ways to do this.
\item Choose $d^G(x,y)$ for each $xy\in E(V_i,V_j)$ where $ij\in I={[t]\choose 2}\setminus J$.  For each $ij\in I$, $(V_i, V_j)$ is $\eta$-regular, so the colors for edges in $E(V_i,V_j)$ can be chosen as follows:
\begin{enumerate}
\item For each $s\notin c^R(ij)$, choose a subset of $E(V_i,V_j)$ of size at most $d|V_i||V_j|$ to have color $s$.  The number of ways to do this is at most $({\frac{n^2}{t^2} \choose d  \frac{n^2}{t^2}} 2^{d \frac{n^2}{t^2}})^r \leq 2^{r(H(d)\frac{n^2}{t^2}+d \frac{n^2}{t^2})}$.
\item Assign colors from $c^R(ij)$ to the rest of the edges in $E(V_i,V_j)$.  There are at most $f^R(i,j)^{\frac{n^2}{t^2}}$ ways to do this.
\end{enumerate}
Therefore, the total number of ways to choose $d^G(x,y)$ for $xy\in E(V_i,V_j)$ where $ij\in I$ is at most
\begin{align*}
 & \prod_{ij\in I} 2^{r(H(d)+d)\frac{n^2}{t^2}} f^R(i,j)^{\frac{n^2}{t^2}}
\leq  2^{r(H(d)+d)n^2}\Bigg( \prod_{ij\in I} f^R(i,j)^{\frac{n^2}{t^2}}\Bigg)  \leq  2^{r(H(d)+d)n^2} W(R)^{\frac{n^2}{t^2}}.
\end{align*}
\end{enumerate}
By assumption, $W(R)< m(r)^{(1-\epsilon){t\choose 2}}$.  Therefore 
$$
W(R)^{\frac{n^2}{t^2}}< m(r)^{(1-\epsilon){t\choose 2}\frac{n^2}{t^2}} <m(r)^{(1-\epsilon)({n\choose 2} +\frac{n}{2}) }.
$$
Combining the above yields that the number of $G\in M_r(n)$ with $R\in Q_{\eta,d}(G)$ is at most
\begin{align*}
N^n 2^{(H(\eta)+\eta)N^2}r^{\frac{ \eta}{2} n^2}r^{\eta n^2}  2^{r(H(d)+d)n^2} m(r)^{(1-\epsilon)({n\choose 2} +\frac{n}{2})} = 2^{S(n)} m(r)^{(1-\epsilon)({n\choose 2} +\frac{n}{2})}.
\end{align*}
The number of $R\in \tilde{M}_r(t)$ with $\frac{1}{\eta}\leq t\leq N$ is at most $(N-\frac{1}{\eta}+1)|\tilde{M}_r(N)|$, so
\begin{align*}\nonumber
 |M_r(n)\setminus E_r(n, \epsilon, \eta, d)|&<(N-\frac{1}{\eta}+1) |\tilde{M}_r(N)| 2^{S(n)} m(r)^{(1-\epsilon)({n\choose 2} +\frac{n}{2})} \\
 &<  (N-\frac{1}{\eta}+1)r^{{N \choose 2}} 2^{S(n)} m(r)^{(1-\epsilon)({n\choose 2} +\frac{n}{2})} = 2^{S'(n)}m(r)^{(1-\epsilon)({n\choose 2} +\frac{n}{2})}.
\end{align*}
Thus
$$
\frac{|M_r(n)\setminus E_r(n, \epsilon, \eta, d)|}{m(r)^{n\choose 2}} < \frac{2^{S'(n)}m(r)^{(1-\epsilon)({n\choose 2} +\frac{n}{2} )}}{m(r)^{{n\choose 2}}} =2^{S''(n)}.
$$ 
We have shown that $n>M$ implies (\ref{**}) holds, so we are done.
\end{proof}

\medskip

\noindent{\bf Proof of Theorem \ref{deltaclosethm}}.  Fix $\delta >0$.  Apply Theorem \ref{stabthm} to $\frac{\delta}{2}$ to obtain $\epsilon$ and $M_{\ref{stabthm}}$.  Apply Lemma \ref{lemma5} to $\frac{\delta}{2}$ to obtain $(d_0)_{\ref{lemma5}}$.  Apply Lemma \ref{lemma3} to $\epsilon$ to obtain $\beta$ and $(d_0)_{\ref{lemma3}}$.  Let $d_0=\min \{(d_0)_{\ref{lemma5}}, (d_0)_{\ref{lemma3}}\}$.  Apply Lemma \ref{lemma3} to $d=d_0\leq (d_0)_{\ref{lemma3}}$ and $\eta = \min \{ \gamma_{el}(d,3), \frac{1}{M_{\ref{stabthm}}} \}$ to obtain $M_{\ref{lemma3}}$.  Set $M=\max \{ CM(\eta, \frac{1}{\eta}), M_{\ref{lemma3}} \}$ and fix $n>M$.  Lemma \ref{lemma3} implies
\begin{eqnarray}\label{boundtouse}
\frac{|M_r(n)\setminus E_r(n,\epsilon, \eta, d)|}{m(r)^{n\choose 2}} \leq 2^{-\beta n^2}.
\end{eqnarray}
We now show $E_r(n,\epsilon, \eta, d)\subseteq D_r(n,\delta/2,\eta, d)$.  Suppose $G\in E_r(n,\epsilon, \eta, d)$.  We need to show that $Q_{\eta,d}(G) \neq \emptyset$ and for all $R\in Q_{\eta,d}(G)$, $R\in \tilde{C}_r^{\delta/2}(t)$ where $t=|V(R)|$.  As $n>CM(\eta, \frac{1}{\eta})$, we have $Q_{\eta,d}(G) \neq \emptyset$.  Suppose $R\in Q_{\eta,d}(G)$ and set $t=|V(R)|$.  By definition of $E_r(n,\epsilon, \eta, d)$, $R\in \tilde{E}_r(t,\epsilon)$.  Theorem \ref{stabthm} and our assumptions on $\eta$ imply that $R\in \tilde{C}^{\delta/2}_r(t)$, so $E_r(n,\epsilon, \eta, d)\subseteq D_r(n,\delta/2,\eta, d)$.  Lemma \ref{lemma5} implies $D_r(n,\delta/2,\eta, d)\subseteq C_r^{\delta}(n)$.  Combining these inclusions with (\ref{boundtouse}) we have that
$$
\frac{|M_r(n)\setminus C^{\delta}_r(n)|}{m(r)^{n\choose 2}} \leq 2^{-\beta n^2}.
$$
By Remark \ref{lowerbound}, $|M_r(n)|\geq m(r)^{n\choose 2}$, so 
$$
\frac{|M_r(n)\setminus C^{\delta}_r(n)|}{|M_r(n)|} \leq \frac{|M_r(n)\setminus C^{\delta}_r(n)|}{m(r)^{n\choose 2}} \leq 2^{-\beta n^2},
$$
which completes our proof of Theorem \ref{deltaclosethm}.
\qed

\section{Proof of Theorem \ref{aathm}}\label{provingtheorem1}

\setcounter{theorem}{0}
\numberwithin{theorem}{section}

In this section we prove Theorem \ref{aathm}, which says that for all even integers $r\geq 4$, almost all $G$ in $M_r(n)$ are in $C_r(n)$.  The outline of the proof of is as follows.  Given $\epsilon>0$ and integers $r,n\geq 3$, define 
\begin{align*}
A_r(n,\epsilon) &= \{ G\in M_r(n): \exists x\in [n]\textnormal{ such that for some } l\in[m(r)-2], |N^G_l(x)|\geq \epsilon n\},\\
A'_r(n,\epsilon) &= \{G\in M_r(n) \setminus A_r(n,\epsilon): \exists xy\in {[n]\choose 2} \textnormal{ with } d^G(x,y) \in [m(r)-2]\}.
\end{align*}

For all $\epsilon>0$, $n\in \mathbb{N}$, and even integers $r\geq 4$, we have that $M_r(n)=C_r(n) \cup A_r(n,\epsilon)\cup A'_r(n,\epsilon)$, and thus $M_r(n)\setminus C_r(n) \subseteq A_r(n,\epsilon)\cup A'_r(n,\epsilon)$.  We will show that when $r$ is even, there are $\epsilon>0$ and $\beta>0$ such that for large $n$, $|A_r(n,\epsilon)\cup A'_r(n,\epsilon)|\leq 2^{-\beta n}|M_r(n)|$, from which Theorem \ref{aathm} will follow.  We do this in two lemmas, one for each of the sets $A_r$ and $A_r'$ defined above.  The first lemma will apply to all $r\geq 3$, while the second will apply only to even $r\geq 4$.  

\begin{lemma}\label{boundA}
For all integers $r\geq 3$ and all $\epsilon>0$ there is $\beta>0$ and $M$ such that $n>M$ implies
\begin{eqnarray}\label{4}
|A_r(n,\epsilon)|\leq 2^{-\beta n^2}|C_r(n)|.
\end{eqnarray}
\end{lemma}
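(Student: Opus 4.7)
For each $G \in A_r(n,\epsilon)$ fix a witness $x \in [n]$, $l \in [m(r)-2]$, and $N \subseteq N_l^G(x)$ of size $k := \lceil \epsilon n \rceil$. Then
\[
|A_r(n,\epsilon)| \;\leq\; n\,(m(r)-2)\binom{n-1}{k}\,\max_{x,l,N}\bigl|\{G \in M_r(n) : N \subseteq N_l^G(x)\}\bigr|,
\]
and the prefactor is of order $2^{O(n)}$. Since $|C_r(n)| \geq m(r)^{\binom{n}{2}}$ by Remark \ref{lowerbound}, it suffices to bound the inner quantity by $2^{-\beta' n^2}m(r)^{\binom{n}{2}}$ for some $\beta' = \beta'(\epsilon, r) > 0$.

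The central structural observation is that whenever $N \subseteq N_l^G(x)$, the triangle inequality at $x$ forces $d^G(y,y') \leq 2l$ for every $y, y' \in N$, so the induced subgraph $G|_N$ is a metric graph on $k$ vertices with distances in $\{1, \ldots, 2l\}$, that is, $G|_N \in M_{2l}(k)$. Applying Corollary \ref{oddcor} with $r' = 2l$ gives $|M_{2l}(k)| \leq (l+1)^{\binom{k}{2} + o(k^2)}$. Since $l \leq m(r)-2$, we have $l+1 \leq m(r)-1$, so
\[
\frac{|M_{2l}(k)|}{m(r)^{\binom{k}{2}}} \;\leq\; \left(\frac{m(r)-1}{m(r)}\right)^{\binom{k}{2} + o(k^2)} \;=\; 2^{-\Omega(\epsilon^2 n^2)},
\]
which provides the required exponential savings in $n^2$.

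To finish, I would write the inner count as the product of the number of choices for $G|_N$, for $G|_{[n]\setminus N}$, and for the cross edges between $N$ and $V := [n]\setminus (N \cup \{x\})$ (the edges $xy$ with $y \in N$ being forced to color $l$). The first factor is at most $|M_{2l}(k)|$, supplying the savings above. The second is at most $|M_r(n-k)| \leq m(r)^{\binom{n-k}{2}+o(n^2)}$ by Corollary \ref{oddcor}, which matches the $m(r)$-baseline on those edges. The third factor is the main obstacle: the naive per-edge bound $\min(2l+1,r)$ coming from the triangle at $x$ already exceeds $m(r)$ when $l = m(r)-2$ and $r$ is large. To control it I would use the additional Lipschitz-type constraint $|d^G(y,z) - d^G(y',z)| \leq d^G(y,y') \leq 2l$ that holds for all $y, y' \in N$ and $z \in V$, a consequence of $G|_N \in M_{2l}(k)$: for each $z \in V$, the profile $(d^G(y,z))_{y \in N}$ is a metric extension of $G|_N$ by a single new vertex, and a further application of Corollary \ref{oddcor} to the metric graph on $N \cup \{z\}$ bounds the number of such profiles by $m(r)^{k + o(k)}$ on average, giving a total contribution of at most $m(r)^{k(n-k-1)+o(n^2)}$ to the cross edges. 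Combining the three factors and comparing with $m(r)^{\binom{n}{2}}$ yields the desired inequality; the hardest step will be executing this last bound rigorously so that the savings from $|M_{2l}(k)|$ survive the product even when the triangle-at-$x$ constraint alone is insufficient.
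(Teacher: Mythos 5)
Your proposal takes a genuinely different route from the paper's. The paper derives Lemma \ref{boundA} from the regularity/stability pipeline: it shows $A_r(n,\epsilon)\subseteq M_r(n)\setminus E_r(n,\epsilon',\eta,d)$ by locating, in any $\eta$-regular partition of a $G\in A_r(n,\epsilon)$, a pair $(V_i,V_j)$ with both $|N^G_l(x)\cap V_i|$ and $|N^G_l(x)\cap V_j|$ large, with $(V_i,V_j)$ regular and $c^R(ij)=c^{R'}(ij)$ for some $R'\in\tilde C_r(t)$; since $r-1\in c^{R'}(ij)$ always, $\rho^G_{r-1}(V_i,V_j)\geq d$ produces an $(r-1)$-edge inside $N^G_l(x)$ and hence a violating triangle with $x$. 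Lemma \ref{lemma3} then gives the bound. Your alternative is a direct counting decomposition, and the observation that $G|_N\in M_{2l}(k)$ is an appealing structural reduction that really does supply the right order of entropy savings on the $\binom{k}{2}$ internal edges of $N$.

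However, the cross-edge step has a genuine gap, and it is not only a matter of rigor: the per-vertex estimate $m(r)^{k+o(k)}$ you hope for is false. Take $r\geq 8$ even, $l=m(r)-2=\tfrac r2-1$, let $G|_N$ be the extremal metric with every pair of $N$ at distance $2l=r-2$ (consistent with all of $N$ being at distance $l$ from $x$, since $l+l=2l$), and take $z$ with $d_0:=d^G(x,z)=r-2$. The triangle at $x$ forces $d^G(y,z)\in[|d_0-l|,\min\{r,d_0+l\}]=[m(r)-2,r]$, a set of size $m(r)+1$, and one checks that \emph{every} assignment of values from $[m(r)-2,r]$ to the $d^G(y,z)$ satisfies the triangle inequality inside $N\cup\{z\}$: then $d^G(y,z)+d^G(y',z)\geq 2m(r)-4=r-2=2l$ and $|d^G(y,z)-d^G(y',z)|\leq r-(m(r)-2)=m(r)+1\leq r-2=2l$ for $r\geq 8$. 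So this single $z$ already admits at least $(m(r)+1)^k$ profiles, and for fixed $r$ we have $(m(r)+1)^k=m(r)^{k+\Theta(k)}$, not $m(r)^{k+o(k)}$. The Lipschitz constraint you invoke is essentially vacuous exactly when $l$ is near $m(r)-2$, because $2l$ is already close to $r$.

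This sinks the plan quantitatively. Your gain from $|M_{2l}(k)|$ is a factor $\bigl(\tfrac{m(r)-1}{m(r)}\bigr)^{\binom k2+o(k^2)}=m(r)^{-\Theta(\epsilon^2 n^2)}$, while the cross-edge overshoot, compounded over the $n-k-1$ vertices $z$, is at least $\bigl(\tfrac{m(r)+1}{m(r)}\bigr)^{k(n-k-1)}=m(r)^{+\Theta(\epsilon(1-\epsilon)n^2)}$. Since $\epsilon^2\ll\epsilon$ for small $\epsilon$, the resulting upper bound is $m(r)^{\binom n2+\Omega(n^2)}$, exponentially \emph{larger} than the target $2^{-\beta n^2}m(r)^{\binom n2}$, and the lemma must hold for all $\epsilon>0$. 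This is precisely the kind of local per-edge bookkeeping that the paper sidesteps by moving the contradiction to the reduced-graph level, where the embedding lemma produces a violating triangle independently of how the entropy of the cross-edges distributes.
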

\begin{proof}
Let $r\geq 3$ be an integer and fix $\epsilon>0$.  By Remark \ref{lowerbound}, it suffices to find $\beta>0$ and $M$ such that $n>M$ implies 
$$
|A_r(n,\epsilon)|\leq 2^{-\beta n^2}m(r)^{{n\choose 2}}.
$$
Choose $T>0$ large enough so that $\frac{\epsilon^2 T^2}{8}-\frac{\epsilon T}{4}\geq 1$, then choose $0<\delta< \min \{\frac{1}{T}, \frac{\epsilon^2}{16}\}$.  Apply Theorem \ref{stabthm} to $\delta$ to obtain $\epsilon_{\ref{stabthm}}$ and $M_{\ref{stabthm}}$.  Apply Lemma \ref{lemma3} to $\epsilon_{\ref{stabthm}}$ to obtain $d_0$ and $\beta>0$.  Choose $d\leq d_0$ and $\eta< \min \{ \delta, \gamma_{el}(d,3), \frac{\epsilon}{2},d, \frac{1}{M_{\ref{stabthm}}}\}$.  Apply Lemma \ref{lemma3} to this $d$ and $\eta$ to obtain $M_{\ref{lemma3}}$.
Choose $M\geq \max\{ M_{\ref{lemma3}},CM(\frac{1}{\eta},\eta)\}$.  Lemma \ref{lemma3} implies that for all $n>M$,
$$
\frac{|M_r(n)\setminus E_r(n,\epsilon_{\ref{stabthm}},\eta, d)|}{m(r)^{n\choose 2}}\leq 2^{-\beta n^2}.
$$
Therefore, it suffices to prove that $n>M$ implies that $A_r(n, \epsilon)\subseteq M_r(n) \setminus E_r(n,\epsilon_{\ref{stabthm}}, \eta, d)$.  Fix $n>M$ and suppose for a contradiction that there is some $G\in A_r(n,\epsilon) \cap E_r(n,\epsilon_{\ref{stabthm}}, \eta, d)$.  Since $G\in A_r(n,\epsilon)$, there is $x\in [n]$ and $l\in [m(r)-2]$ such that $|N^G_l(x)|\geq \epsilon n$.  Because $n>CM(\frac{1}{\eta}, \eta)$, there is $R\in Q_{\eta,d}(G)$.  Also, $G\in E_r(n,\epsilon_{\ref{stabthm}}, \eta, d)$ implies that $W(R) \geq m(r)^{(1-\epsilon_{\ref{stabthm}}){t\choose 2}}$ where $t=|V(R)|$.  Then $t\geq \frac{1}{\eta}\geq M_{\ref{stabthm}}$ implies that there is $R'\in \tilde{C}_r(t)$ such that $|\Delta(R,R')|\leq \delta t^2$.  

Let $\mathcal{P}=\{V_1,\ldots, V_t\}$ be an $\eta$-regular partition for $G$ such that $R=R(G,\mathcal{P}, d)$, and define $\Sigma=\{ i\in [t]: |N^G_l(x)\cap V_i|\geq \frac{\epsilon}{2} |V_i|\}$.  We have that
\begin{align*}
\epsilon n \leq |N^G_l(x)|= \sum_{i\in \Sigma} |N_l^G(x)\cap V_i|+\sum_{i\notin \Sigma} |N_l^G(x)\cap V_i| \leq |\Sigma|\frac{n}{t} + (t-|\Sigma|)\frac{\epsilon}{2}\frac{n}{t}= |\Sigma|(1-\frac{\epsilon}{2})\frac{n}{t} + \frac{\epsilon n }{2}.
\end{align*}
Rearranging this, we obtain that $|\Sigma|\geq (\frac{\epsilon n }{2})/((1-\frac{\epsilon}{2})\frac{n}{t}) = \frac{\epsilon t}{2(1-\frac{\epsilon}{2})}\geq \frac{\epsilon t}{2}$.  Set 
$$
I = \{ ij \in E(\Sigma): (V_i,V_j)\textnormal{ is }\eta\textnormal{-regular for }G\textnormal{ and }c^R(ij) = c^{R'}(ij)\}.
$$
Applying that $\mathcal{P}$ is an $\eta$-regular partition for $G$, that $|\Delta(R,R')|\leq \delta t^2$, and that $\frac{\epsilon t}{2}\leq |\Sigma|$ yields 
\begin{eqnarray}\label{Psize}
|I|\geq { \frac{\epsilon t}{2}\choose 2} - \eta t^2 - \delta t^2 = t^2\Bigg(\frac{\epsilon^2}{4} - \eta - \delta \Bigg) - \frac{\epsilon t}{4} \geq  t^2\Bigg(\frac{\epsilon^2}{4} - 2\delta \Bigg)-\frac{\epsilon t}{4}, 
\end{eqnarray}
where the last inequality is because $\eta \leq \delta$.  By our assumptions on $\delta$ and because $t\geq \frac{1}{\delta}\geq T$, the right hand side of (\ref{Psize}) is at least $\frac{\epsilon^2 t}{8} - \frac{\epsilon t}{4}\geq 1$.  Thus $I\neq \emptyset$.

Take $ij\in I$ and let $W_i = N^G_l(x)\cap V_i$ and $W_j = N^G_l(x)\cap V_j$.  Since $\eta\leq \frac{\epsilon}{2}$ and $(V_i, V_j)$ is $\eta$-regular for $G$, we have $\rho^G_{r-1}(W_i,W_j)\geq \rho^G_{r-1}(V_i,V_j)-\eta$.  Because $c^R(ij)=c^{R'}(ij)$, we have that $r-1\in c^R(ij)$.  Therefore, by definition of $R$, $\rho^G_{r-1}(V_i,V_j)\geq d$, so $\rho^G_{r-1}(W_i,W_j)\geq d-\eta >0$, where the last inequality is by assumption on $\eta$.  Therefore, there is $(x_i,x_j)\in W_i\times W_j$ such that $d^G(x_i,x_j)=r-1$.  But now $d^G(x,x_i) = l$, $d^G(x,x_j)= l$, and $d^G(x_i,x_j)=r-1$ implies that $\{x,x_i,x_j\}$ is a violating triangle in $G$, a contradiction.  This finishes the proof that $A_r(n,\epsilon)\subseteq M_r(n)\setminus E_r(n,\epsilon_{\ref{stabthm}}, \eta, d)$, so we are done.  
\end{proof}

\begin{lemma}\label{A''lemma}
Let $r\geq 4$ be an even integer integer.  There are $\epsilon, \beta>0$ and $N$ such that $n>N$ implies
\begin{eqnarray}\label{A''inequality}
|A_r'(n,\epsilon)|\leq 2^{N^2-\beta n}|C_r(n)|.
\end{eqnarray}
\end{lemma}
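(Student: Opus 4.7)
The plan is to enumerate $G \in A_r'(n,\epsilon)$ by fixing a small-distance edge $xy$ with $d^G(x,y)=l \in [m(r)-2]$, and exploiting the triangle inequality to obtain an exponential-in-$n$ savings on the number of extensions from $\{x,y\}$ to the rest. The argument will proceed by joint induction on $n$ together with Theorem~\ref{aathm}.

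Since $G \notin A_r(n,\epsilon)$, every vertex has fewer than $\epsilon n$ neighbors at each small distance. Therefore the set
\[
B := \{z \in [n]\setminus\{x,y\} : d^G(x,z)\le m(r)-2 \text{ or } d^G(y,z)\le m(r)-2\}
\]
satisfies $|B| \le 2(m(r)-2)\epsilon n$, which I will call $c_\epsilon n$. For every $z \notin B \cup \{x,y\}$, both $d^G(x,z)$ and $d^G(y,z)$ lie in $[r/2,r]$ (a set of size exactly $m(r)$ since $r$ is even), and the triangle inequality with $d^G(x,y)=l$ forces $|d^G(x,z)-d^G(y,z)| \le l \le m(r)-2$. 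The crucial numerical point is that the extremal pairs $(r/2,r)$ and $(r,r/2)$, which have difference $m(r)-1$, are excluded, so there are at most $m(r)^2 - 2$ admissible choices for the pair $(d^G(x,z),d^G(y,z))$, instead of the unconstrained $m(r)^2$.

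I would then encode $G$ by the data: (i) the triple $(x,y,l)$, contributing at most $n^2(m(r)-2)$ options; (ii) the set $B$, contributing at most $2^{(H(c_\epsilon)+o(1))n}$; (iii) the pair $(d^G(x,z),d^G(y,z))$ for each $z\in B$, contributing at most $r^{2c_\epsilon n}$; (iv) the analogous pair for each $z \notin B\cup\{x,y\}$, contributing at most $(m(r)^2-2)^{n-2-c_\epsilon n}$; and (v) the induced metric $r$-graph on $[n]\setminus\{x,y\}$, bounded crudely by $|M_r(n-2)|$. Invoking the inductive hypothesis (Theorem~\ref{aathm} for $n-2$) gives $|M_r(n-2)| \le 2 m(r)^{\binom{n-2}{2}}$, and dividing the resulting product by $|C_r(n)| = m(r)^{\binom{n}{2}} = m(r)^{\binom{n-2}{2}+2n-3}$ yields, with $\alpha := -\log_2\bigl(1-2/m(r)^2\bigr) > 0$, a bound of the form
\[
\log_2\!\Bigl(\tfrac{|A'_r(n,\epsilon)|}{|C_r(n)|}\Bigr) \le -n\bigl[\alpha(1-c_\epsilon) - H(c_\epsilon) - 2c_\epsilon\log_2(r/m(r))\bigr] + O(\log n).
\]
Since $c_\epsilon, H(c_\epsilon)\to 0$ as $\epsilon\to 0$, I would choose $\epsilon$ so small that the bracket exceeds $\alpha/2$ and set $\beta := \alpha/4$, giving the required inequality for all sufficiently large $n$; the finitely many smaller $n$ are absorbed into the $2^{N^2}$ prefactor.

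The main obstacle is the joint induction: the inductive step uses $|M_r(n-2)| \le O(|C_r(n-2)|)$, which is exactly Theorem~\ref{aathm} at size $n-2$, obtained by combining Lemma~\ref{boundA} (unconditional) with this lemma applied to $n-2$. So $\epsilon,\beta,N$ must be fixed uniformly at the start and one must verify that Lemmas~\ref{boundA} and~\ref{A''lemma} together inductively carry the estimate $|M_r(k)| \le 2 m(r)^{\binom{k}{2}}$ for all $k<n$. A secondary but essential point is the use of the evenness of $r$: it is precisely the identity $|[r/2,r]| = m(r)$ that makes the two extremal pairs $(r/2,r),(r,r/2)$ lie outside the constraint set $S_l$ for every $l \le m(r)-2$, yielding the savings factor $(m(r)^2-2)/m(r)^2 < 1$ whose logarithm $-\alpha$ drives the entire argument.
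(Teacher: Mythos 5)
Your proposal captures the paper's approach closely: you identify the same savings factor from excluding the extremal pairs $(r/2,r)$ and $(r,r/2)$, the same enumeration by fixing a small-distance edge $xy$ and isolating the small-distance neighborhood $B$, and the same inductive structure combining an unconditional bound (Lemma~\ref{boundA}) with the present lemma at size $n-2$. Your constant $\alpha = \log_2\bigl(m(r)^2/(m(r)^2-2)\bigr)$ is consistent with the paper's choice $\beta = \alpha/2$. The weak point is the proposed inductive hypothesis: you say one should ``verify that Lemmas~\ref{boundA} and~\ref{A''lemma} together inductively carry the estimate $|M_r(k)| \le 2m(r)^{\binom{k}{2}}$ for all $k<n$.'' This cannot be verified. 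The conclusion $|A'_r(k,\epsilon)| \le 2^{N^2-\beta k}|C_r(k)|$ only implies $|M_r(k)| \le O(1)\cdot m(r)^{\binom{k}{2}}$ once $k$ exceeds roughly $N^2/\beta$; for $k$ near the base value $N$ the prefactor $2^{N^2-\beta k}$ is enormous, so the estimate you propose to carry fails exactly where the induction starts.

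The fix, which is what the paper actually does, is to carry the lemma's own conclusion as the inductive hypothesis. This yields $|M_r(n-2)| \le \bigl(1 + 2^{-\beta_{\ref{boundA}}(n-2)^2} + 2^{N^2-\beta(n-2)}\bigr)|C_r(n-2)|$, and one then splits into cases on whether $N^2-\beta(n-2)\le 1$. In the large-prefactor regime the excess $2^{N^2-\beta(n-2)}$ is absorbed because your bracket exceeds $2\beta$, so the exponent $N^2-\beta n$ remains an upper bound at every step. Once you make this change, the rest of your computation goes through. You should also make the base case explicit: the paper uses Corollary~\ref{oddcor} at $n=N, N+1$, though the crude bound $|M_r(n)|/|C_r(n)|\le (r/m(r))^{\binom{n}{2}}\le 2^{N^2-\beta n}$, valid since $r/m(r)<2$, would also serve.
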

\begin{proof}
All logs are base $2$.  Set $\beta=\frac{1}{2}(\log m(r)^2 - \log (m(r)^2-2))$ and choose $\epsilon>0$ small enough so that 
\begin{eqnarray}\label{epsilon}
2r(H(\epsilon)+\epsilon)-2\beta<-\frac{3\beta}{2}.
\end{eqnarray}
Given an integer $k$, set
\begin{align*}
F(k)&=\log {k\choose 2} + \log(m(r)-2) - 2\log(m(r)^2-2) +2rk(H(\epsilon)+\epsilon) \text{ and }\\
F'(k) &= F(k) +3\log m(r).
\end{align*}
By Corollary \ref{oddcor}, there is $n_0$ such that $n>n_0$ implies
\begin{eqnarray}\label{o(n^2)}
|M_r(n)|\leq 2^{(n-1)^2-\beta n}m(r)^{n\choose 2}=2^{(n-1)^2-\beta n}|C_r(n)|.
\end{eqnarray}
By (\ref{epsilon}) and definition of $F'(n)$, there is $n_1$ such that $n>n_1$ implies 
\begin{align}
F'(n) -2\beta n +5<-\beta n. \label{A''1}
\end{align}
Apply Lemma \ref{boundA} to $\epsilon$ to obtain $M_{\ref{boundA}}$ and $\beta_{\ref{boundA}}$.  Choose $N >\max\{M_{\ref{boundA}},n_0, n_1\}$ large enough so $\beta_{\ref{boundA}}(N-2)^2 >1$.  We show by induction that for all $n\geq N$, (\ref{A''inequality}) holds.  We begin with the base cases $n=N$ and $n=N+1$.  Combining (\ref{o(n^2)}) with the fact that for all $n$, $A'_r(n,\epsilon)\subseteq M_r(n)$ yields 
\begin{align*}
|A'_r(N,\epsilon)|&\leq |M_r(N)|\leq 2^{(N-1)^2- \beta N}|C_r(N)|< 2^{N^2-\beta N }|C_r(N)|\textnormal{ and }\\
|A'_r(N+1,\epsilon)|&\leq |M_r(N+1)|\leq 2^{N^2-\beta(N+1)}|C_r(N+1)|.
\end{align*}
Therefore (\ref{A''inequality}) holds for $n=N$ and $n=N+1$.  Suppose now $n\geq N+2$ and (\ref{A''inequality}) holds for all $m$ such that $N\leq m\leq n-1$.  We show it holds for $n$.  We can construct any element $G$ of $A'_{r}(n,\epsilon)$ as follows.
\begin{enumerate}[$\bullet$]
\item Choose a pair of elements $xy\in {[n]\choose 2}$.  There are ${n\choose 2}$ ways to do this.
\item Choose $d^G(x,y)\in [m(r)-2]$.  There are $m(r)-2$ ways to do this.
\item Put a structure on $[n]\setminus \{x,y\}$.  There are $|M_r(n-2)|$ ways to do this.
\item For each $l\in [m(r)-2]$, choose $N_l(x)$ and $N_l(y)$.  Since $G$ is not in $A_r(n,\epsilon)$, for each $l\in [m(r)-2]$, $\max \{|N_l(x)|,|N_l(y)|\} \leq \epsilon n$.  Therefore, there are at most $({n\choose \epsilon n}2^{\epsilon n})^{2(m(r)-2)}\leq 2^{2rn(H(\epsilon)+ \epsilon)}$ ways to do this.
\item For each $z\in [n]\setminus (\{x,y\}\cup \bigcup_{l=1}^{m(r)-2}N_l(x)\cup N_l(y))$, choose $d^G(x,z)$ and $d^G(y,z)$.  Note $(d^G(x,z),d^G(y,z))$ must be chosen from $[m(r)-1,r]\times [m(r)-1,r] \setminus \{(m(r)-1,r), (r,m(r)-1)\}$, so there are at most $m(r)^2-2$ choices.
\end{enumerate}
Combining all of this we obtain that $|A'_r(n,\epsilon)|$ is at most 
\begin{align}\label{A''4}
{n\choose 2}(m(r)-2)2^{2rn(H(\epsilon)+ \epsilon)} (m(r)^2-2)^{n-2}|M_r(n-2)|= 2^{F(n)}(m(r)^2-2)^n|M_r(n-2)|.
\end{align}
Because $M_r(n-2)\subseteq C_r(n-2)\cup A_{r}(n-2,\epsilon)\cup A'_{r}(n-2,\epsilon)$, 
$$
|M_r(n-2)|\leq |C_{r}(n-2)|+|A_{r}(n-2,\epsilon)|+|A'_{r}(n-2,\epsilon)|.
$$
Lemma \ref{boundA} implies $|A_{r}(n-2,\epsilon)| \leq |C_{r}(n-2)|2^{-\beta_{\ref{boundA}} (n-2)^2}$, and our induction hypothesis implies $|A'_{r}(n-2,\epsilon)|\leq |C_{r}(n-2)|2^{N^2-\beta(n-2)}.$  Remark \ref{lowerbound} implies $|C_r(n)|=m(r)^{2n-3}|C_r(n-2)|$.  Combining these facts with (\ref{A''4}), we obtain that 
\begin{align}
|A'_r(n,\epsilon)|&\leq 2^{F(n)}(m(r)^2-2)^n(1+ 2^{-\beta_{\ref{boundA}}(n-2)^2}+2^{N^2-\beta (n-2)})|C_r(n-2)|\nonumber \\
&= 2^{F(n)}(m(r)^2-2)^n m(r)^{-2n+3}(1+ 2^{-\beta_{\ref{boundA}}(n-2)^2}+2^{N^2-\beta (n-2)})|C_r(n)|\nonumber \\
&= 2^{F'(n) - 2\beta n}(1+ 2^{-\beta_{\ref{boundA}}(n-2)^2}+2^{N^2-\beta (n-2)}) |C_r(n)|. \label{A'}
\end{align}
By assumption on $N$, $-\beta_{\ref{boundA}}(n-2)^2 <-1$, so we have that
\begin{align*}
1+ 2^{-\beta_{\ref{boundA}}(n-2)^2}+2^{N^2-\beta (n-2)}\leq 2+2^{N^2-\beta (n-2)}\leq \begin{cases}
4& \textnormal{ if } N^2-\beta(n-2) \leq 1,\\
2(2^{N^2-\beta (n-2)})& \textnormal{ if } N^2-\beta(n-2) > 1.
 \end{cases}
\end{align*}
Combining this with (\ref{A'}) yields that 
\[ 
|A_r'(n,\epsilon)|\leq \begin{cases}
 2^{F'(n) - 2\beta n +2}|C_r(n)| & \text { if }N^2-\beta(n-2)\leq 1 \text{ and }\\
2^{F'(n)- 3\beta n +N^2+5}|C_r(n)| &\text { if }N^2-\beta(n-2)> 1.
\end{cases} \]
In both cases we have $|A'_r(n,\epsilon)|\leq 2^{N^2 +F'(n)-2\beta n +5}|C_r(n)|$, so by (\ref{A''1}), $|A'_r(n,\epsilon)|\leq 2^{N^2-\beta n}|C_r(n)|$.  This completes the induction.
\end{proof}

\noindent{\bf Proof of Theorem \ref{aathm}}.  Fix $r\geq 4$ an even integer.  Apply Lemma \ref{A''lemma} to obtain $\epsilon_{\ref{A''lemma}}$, $\beta_{\ref{A''lemma}}$ and $N_{\ref{A''lemma}}$. Apply Lemma \ref{boundA} to $\epsilon_{\ref{A''lemma}}$ to obtain $\beta_{\ref{boundA}}$ and $M_{\ref{boundA}}$.  Set $\epsilon =\epsilon_{\ref{A''lemma}}$ and $\beta = \frac{1}{2}\beta_{\ref{A''lemma}}$.  Let $M'$ be large enough so that $n>M'$ implies $2^{-\beta_{\ref{boundA}} n^2}+2^{N_{\ref{A''lemma}}^2-\beta_{\ref{A''lemma}} n}< 2^{-\beta n}$.  Set $M=\max \{M_{\ref{boundA}},N_{\ref{A''lemma}},M' \}$.  For all $n$, by definition, $M_r(n)\setminus C_r(n) \subseteq A_r(n,\epsilon) \cup A'_r(n,\epsilon)$.
Therefore, when $n>M$ our assumptions imply
\begin{align*}
|M_r(n)\setminus C_r(n)| &\leq |A_r(n,\epsilon)|+|A'_r(n,\epsilon)| \leq (2^{-\beta_{\ref{boundA}}n^2}+2^{N^2_{\ref{A''lemma}}-\beta_{\ref{A''lemma}}n})|C_r(n)| <2^{-\beta n}|C_r(n)|.
\end{align*}
Rearranging yields that $|C_r(n)|\geq |M_r(n)|(1-2^{-\beta n})$, as desired.
\qed

\section{Concluding remarks}\label{concludingremarks}

\setcounter{theorem}{0}
\numberwithin{theorem}{section}


\bigskip
\noindent
$\bullet$
When $r$ is odd, the error term in Corollary \ref{oddcor} cannot be strengthened from $o(n^2)$ to $o(1)$ (or even to $O(n)$), as in Corollary \ref{evencor}.  This can be seen by constructing a large collection of elements of $M_r(n)$, which will show that $|M_r(n)|$ is at least $m(r)^{{n\choose 2}+\Omega(n\log_{m(r)}(n))}$.  Fix $n$ a sufficiently large integer.  Define a \emph{matching} to be a set $S\subseteq {[n]\choose 2}$ such that no two elements of $S$ have nonempty intersection.  Given a matching $S$, define $A(S)$ to be the set of simple complete $r$-graphs $G$ such that for each $xy\in S$, $d^G(x,y)=m(r)-1$ and for each $xy\in {[n]\choose 2}\setminus S$, $d^G(x,y) \in [m(r),r]$.  One can easily verify that for any matching $S$, no element of $A(S)$ contains a violating triangle, so $A(S)\subseteq M_r(n)$, and that given another matching $S'\neq S$, $A(S)\cap A(S')=\emptyset$.  Further, it is clear that that $|A(S)|=m(r)^{{n\choose 2}-|S|}$ and $|S|\leq \frac{n}{2}$, so $|A(S)|\geq m(r)^{{n\choose 2}-\frac{n}{2}}$.  Finally, note that there are at least $(\frac{n}{2})!$ distinct matchings on $[n]$.  This and Stirling's approximation yields that
$$
|M_r(n)|\geq (\frac{n}{2})!m(r)^{{n\choose 2}-\frac{n}{2}} =m(r)^{{n\choose 2}+\Omega(n\log_{m(r)} n)}.
$$
Combining this with Theorem \ref{oddcor}, the best bounds we have obtained for $|M_r(n)|$ are
$$
m(r)^{{n\choose 2}+\Omega(n\log n)}\leq |M_r(n)|\leq m(r)^{{n\choose 2}+o(n^2)}.
$$
We conjecture that in fact, $|M_r(n)|=m(r)^{{n\choose 2}+ \Theta(n\log n)}$. 

\bigskip

\noindent
$\bullet$ It is impossible to extend Theorem \ref{aathm} to the case when $r$ is odd. Indeed, one can show that 
$$
|C_r(n)|\leq (1-r^{-66r^2})|M_r(n)|.
$$
The proof of this (see the appendix) in fact shows that there is a $\mathcal{L}_r$-sentence $\psi$ such that for all $n$, $C_r(n)\subseteq \{G\in M_r(n): G\models \neg \psi\}$, and
\begin{eqnarray}\label{CR}
|C_r(n)|\leq r^{65r^2}|\{G\in M_r(n): G\models \psi\}|.
\end{eqnarray}

Suppose we knew that for some $\alpha>0$, $|C_r(n)|\geq \alpha |M_r(n)|$ for all sufficiently large $n$.  Then since for all $G\in C_r(n)$, $G\models \neg \psi$ we would know that 
$$
|\{G\in M_r(n): G\models \neg \psi\}|\geq \alpha |M_r(n)|.
$$
Dividing both sides of this by $|M_r(n)|$ gives us that $\mu^{M_r}(\neg \psi)\geq \alpha$, and therefore $\mu^{M_r}(\psi)\leq 1-\alpha$.  By dividing the quantities in (\ref{CR}) by $|M_r(n)|$, we obtain that $|C_r(n)|/|M_r(n)| \leq \mu^{M_r}(\psi){r^{65r^2}}$, and therefore $\alpha /r^{65r^2} \leq \mu^{M_r}(\psi)$.  Combining these inequalities, we would have that
$$
0<\frac{\alpha}{ r^{65r^2}} \leq \mu^{M_r}(\psi) \leq 1-\alpha<1,
$$
that is, $\mu^{M_r}(\psi) \notin \{0, 1\}$.  Therefore, if we could show such an $\alpha$ existed, we would know that $M_r$ had no labeled first-order $0$-$1$ law.  However, we do not know that such an $\alpha$ exists.  In fact it seems likely to the authors that instead, $\lim_{n\rightarrow \infty}|C_r(n)|/ |M_r(n)|=0$.

\section{Appendix}\label{appendix}

\setcounter{theorem}{0}
\numberwithin{theorem}{section}

\noindent{\bf Proof of Lemma \ref{sizelemma}}.   Given an integer $r\geq 3$, subsets $A,B,C\subseteq [r]$, and integers $x, y$, write $H_r(A,B,C,x,y)$ to mean $A,B,C,x,y$ satisfy the hypotheses of the lemma for $r$.  We show by induction on $r$ that for all $r\geq 3$, $A,B,C\subseteq [r]$, and $x,y\in \mathbb{N}$, $H_r(A,B,C,x,y)$ implies $A\times B\times C$ contains a violating triple.

\underline{Case $r=3$}:  Fix $A$, $B$, $C\subseteq [3]$, and integers $x,y$ such that $H_3(A,B,C,x,y)$.  As $m(3)=2$ and $3-m(3)=1$, we have $|A|= 3$, $x=1$, $|B|\geq 2$, $0\leq y\leq 1$, and $|C| \geq \max\{ 2-1-y+2, 1\}=\max \{3-y,2\}$.  If $y=0$, then $|B|=2$ and $|C|\geq 3-y=3$, contradicting that $|B|\geq |C|$.  Therefore, $y=1$, $|B|=3$, and $|C|\geq 2$.  This implies that $A=B=[3]$ and $C\cap\{1,3\}\neq \emptyset$, so either $(3,1,1)$ or $(1,1,3)$ is in $A\times B\times C$, and we are done.  

\underline{Case $r>3$}: 
Let $r>3$ and suppose by induction that the claim holds for all $3\leq r'<r$.  Fix $A,B,C\subseteq [r]$ and integers $x,y$ such that $H_r(A,B,C,x,y)$.  Notice this implies $x \geq y\geq 0$ and $x\geq 1$.  Suppose $A,B,C\subseteq [r-1]$.  Then
\[
|A|= m(r)+x= \begin{cases} 
m(r-1)+x+1 & \textnormal{ if }r\textnormal{ is even}\\
m(r-1)+x & \textnormal{ if }r\textnormal{ is odd},
\end{cases}
\]
\[
|B|= m(r)+y= \begin{cases} 
m(r-1)+y+1 & \textnormal{ if }r\textnormal{ is even}\\
m(r-1)+y & \textnormal{ if }r\textnormal{ is odd}
\end{cases}
\]
and
\[
|C|\geq \begin{cases}
\max \{m(r)-x-y,1\} =\max\{ m(r-1)-(x+1)-(y+1)+3,1\} & \text{if } r \text{ is even} \\
\max \{m(r)-x-y+2,1\} =\max\{ m(r-1)-x-y+2,1\} & \text{if } r \text{ is odd} .
\end{cases}
\]
Thus, $H_{r-1}(A,B,C,x,y)$ holds when $r$ is odd, and $H_{r-1}(A,B,C,x+1,y+1)$ holds when $r$ is even, so we are done by the induction hypothesis.  Assume now one of $A$, $B$, or $C$ contains $r$.  Let $a=\min A$, $b=\min B$, $c=\min C$, $a'=\max A$, $b'=\max B$, and $c'=\max C$.  Our assumptions imply that
\begin{eqnarray}\label{a}
a\leq r-|A|+1 = r-(m(r)+x) +1  = \begin{cases}
m(r)-1-x& \textnormal{ if }r\textnormal{ is even}\\
m(r)-x& \textnormal{ if }r\textnormal{ is odd},
\end{cases} 
\end{eqnarray}
and 
\[ b\leq r-|B|+1 \leq r-(m(r) +y) +1 = \begin{cases}
m(r)-1-y & \textnormal{ if }r\textnormal{ is even}\\
m(r)-y & \textnormal{ if }r\textnormal{ is odd}.
\end{cases} \] 
Thus
\[ a+b\leq \begin{cases}
m(r)-1-x+m(r)-1-y=r-x-y& \textnormal{ if }r\textnormal{ is even}\\
m(r)-x+m(r)-y = r-x-y+1 & \textnormal{ if }r\textnormal{ is odd}.
\end{cases} \] 
If 
\[ c'> \begin{cases}
r-x-y & \textnormal{ if }r\textnormal{ is even}\\
r-x-y+1 & \textnormal{ if }r\textnormal{ is odd},
\end{cases} \] 
then $(a,b,c)$ is a violating triple and we are done.  So assume 
\begin{eqnarray}c'\leq \begin{cases} \label{c'}
r-x-y & \textnormal{ if }r\textnormal{ is even}\\
r-x-y+1 & \textnormal{ if }r\textnormal{ is odd}.
\end{cases} 
\end{eqnarray}
Note that
\[ c\leq \begin{cases}
r-x-y -|C|+1 \leq r-x-y-(m(r)-x-y)+1 = m(r)-1& \textnormal{ if }r\textnormal{ is even}\\
r-x-y+1-|C|+1 \leq r-x-y+1-(m(r)-x-y+2) +1 = m(r)-1& \textnormal{ if }r\textnormal{ is odd}.
\end{cases} \] 
Therefore
\[ c+a \leq \begin{cases}
m(r)-1+ m(r)-1-x = r-x& \textnormal{ if }r\textnormal{ is even}\\
m(r)-1+ m(r)-x = r-x & \textnormal{ if }r\textnormal{ is odd}.
\end{cases} \] 
If $b'>r-x$, then $(a,b',c)$ is a violating triple and we are done.  So assume $b'\leq r-x$. Because $x\geq 1$, this implies $r\notin B$.  Further, 
\begin{eqnarray}\label{b}
b \leq \begin{cases}
r-x-(m(r)+y) +1= m(r)-x-y-1& \textnormal{ if }r\textnormal{ is even}\\
r-x-(m(r)+y) +1= m(r)-x-y & \textnormal{ if }r\textnormal{ is odd}.
\end{cases} 
\end{eqnarray}
Suppose $r\notin C$.  Then we must have that $a'=r\in A$.  Therefore,
\[a'-b \geq \begin{cases}
r-(m(r)-x-y-1) = m(r) +x+y -1& \textnormal{ if }r\textnormal{ is even}\\
r-(m(r)-x-y) = m(r)+x+y-1 & \textnormal{ if }r\textnormal{ is odd}.
\end{cases} \]
We now have $c\leq m(r)-1< m(r)+x+y-1 \leq a'-b$, so $(a',b,c)$ is a violating triple, and we are done.

Suppose now $c'=r\in C$.  By (\ref{c'}), this implies that $r$ is odd, $x=1$ and $y=0$.  By (\ref{b}), $b\leq m(r)-1$.  Therefore, 
$$
c'-b\geq r-(m(r)-1)=m(r)>m(r)-x,
$$
so by (\ref{a}), $(a,b,c')$ is a violating triple.  This completes the induction.
\qed
\medskip

\noindent{\bf Proof of Lemma \ref{lemmatriangle}.}
We proceed by induction on $r\geq 3$.  The base case $r=3$ can easily be verified.  Suppose now the claim holds for all $3\leq r'<r$.  Set $A'=A\cap[r-1]$, $B'=B\cap [r-1]$, and $C'=C\cap [r-1]$.  

Suppose that $r$ is odd.  If $A, B, C\subseteq [r-1]$, then because $|A|=|B|=|C|=m(r)=m(r-1)$, the induction hypothesis implies that $A=B=C=[m(r-1)-1,r-1]=[m(r)-1,r-1]$, i.e. case (2)(a) holds.  Suppose now one of $A$, $B$, or $C$ contain $r$.  By relabeling if necessary, we may assume $r\in A$.  Let $a'=r\in A$, $b=\min B$ and $c=\min C$.  Then $b\leq r-|B|+1 = m(r)$.  Therefore $c\geq a-b \geq r-m(r)=m(r)-1$, so $C\subseteq [m(r)-1,r]$.  Similarly, $c\leq r-|C|+1 = m(r)$, so $b\geq a'-c\geq r-m(r)=m(r)-1$ implies $B\subseteq [m(r)-1,r]$.  If $b=c=m(r)-1$, then $(a',b,c)$ is a violating triple, a contradiction.  Thus as most one of $b$ or $c$ is $m(r)-1$.  Therefore, by relabeling if necessary, we may assume $B\subseteq [m(r),r]$ and $C\subseteq [m(r)-1,r]$.  Recall that $|[m(r)-1,r]|=m(r)+1 = |B|+1$, so this implies that $B=[m(r),r]$. Let $a=\min A$.  Then $r\in B$ and $c\leq m(r)$ implies $a\geq r-m(r)= m(r)-1$, so $A\subseteq [m(r)-1,r]$.  If $C=[m(r),r]$, then we are done.  If $C\neq [m(r),r]$, then $c<m(r)$ implies $(m(r)-1,r,c)$ is a violating triple, so $m(r)-1\notin A$.  Thus $A\subseteq [m(r),r]$ and $|A|=|[m(r),r]|$ implies $A=[m(r),r]$ and we are done.

Suppose now that $r$ is even.  Note that $\min\{|A'|, |B'|,|C'|\} \geq m(r)-1$.  If two elements of the set $\{|A'|, |B'|,|C'|\}$ are strictly greater than $m(r)-1=m(r-1)$, then Lemma \ref{sizelemma} implies there is a violating triple in $A'\times B'\times C'$, a contradiction.  Therefore by relabeling if necessary, we may assume $|A'|=|B'|=m(r)-1$, so $r\in A\cap B$.  Let $a=\min A$, $b=\min B$, $c=\min C$ and note that $\max\{a,b,c\}\leq r-m(r)+1=m(r)-1$.  Now $(a,r,c)$ and $(r,b,c)$ cannot be violating triples, so 
\begin{align*}
a&\geq r-c\geq r-(m(r)-1)=m(r)-1, \\
c&\geq r-b\geq r-(m(r)-1)=m(r)-1\textnormal{ and }\\
b&\geq r-c\geq r-(m(r)-1)=m(r)-1.
\end{align*}
Thus, $A,B,C\subseteq [m(r)-1,r]$.  Since $|A|=|B|=|C|=|[m(r)-1,r]|$, this implies $A=B=C=[m(r)-1,r]$.
\qed

\subsection{Proof that when $r$ is odd, $C_r(n)$ is not almost all of $M_r(n)$.}
Fix $r\geq 3$ an odd integer for the rest of this section.  In this section we show that it is not the case that almost all elements of $M_r(n)$ are in $C_r(n)$ by constructing, for each integer $n\geq 4$, a map $f: C_r(n)\rightarrow M_r(n)\setminus C_r(n)$ which is at most $r^{65r^2}$-to-$1$.  This will imply that for all $n\geq 4$, 
\begin{eqnarray}\label{1.8(2)}
| C_r(n)|\leq (1-r^{-66r^2})|M_r(n)|.
\end{eqnarray}
We start with some preliminary definitions. Given an integer $n$ and $X, Y$ disjoint subsets of $[n]$, set $X<_*Y$ if and only if
\begin{enumerate}[(i)]
\item $|X|<|Y|$ or
\item $|X|=|Y|$ and $\min X< \min Y$.
\end{enumerate}

\begin{definition} Fix an integer $n\geq 3$ and $G\in M_r(n)$.
\begin{enumerate}
\item A set $X\subseteq [n]$ is a \emph{component} of $G$ if for all $xy\in {X\choose 2}$, there is a sequence $(z_1,\ldots, z_k)$ of distinct elements of $X$ such that $x=z_1$, $y=z_k$, and for each $1\leq i\leq k-1$, $d^G(z_i,z_{i+1})=m(r)-1$.
\item A \emph{component decomposition} of $G$ is a partition $X_1,\ldots, X_l$ of $[n]$ such that each $X_i$ is a component of $G$.  Note that there is a unique component decomposition of $G$, up to relabeling.
\item If $X_1,\ldots, X_l$ is the component decomposition of $G$ and $X_1<_*\ldots <_* X_l$, we say $X_1,\ldots, X_l$ is the \emph{canonically ordered component decomposition} (c.o.c.d.) of $G$.
\item A component $X$ of $G$ is \emph{large} if $|X|\geq 2r$.  Otherwise it is \emph{small}.
\item Suppose $X_1,\ldots, X_l$ is the c.o.c.d. of $G$.  The \emph{minimal large component} of $G$ is 
\[
ML(G)=\begin{cases}
\emptyset &\textnormal{ if }\max\{|X_1|,\ldots, |X_l|\}<2r,\\
X_i \textnormal{ where }i=\min \{ j\in [l]: |X_j|\geq 2r\} &\textnormal{ otherwise}.
\end{cases}
\]
\item $H$ is the simple complete $r$-graph with vertex set $[4]$ such that $d^H(1,3)=d^H(2,4)=r-1$, $d^{H}(1,4) = r$, and $d^{H}(1,2)= d^G(2,3)=d^H(3,4)=m(r)-1$.
\item A \emph{bad cycle in $G$} is a sequence $(z_1,\ldots, z_k)$ of distinct elements of $[n]$ such that for each $1\leq i\leq k-1$, $d^G(z_i,z_{i+1})=m(r)-1$ and $d^G(z_1,z_k)=r$.  Say $G$ \emph{contains a bad cycle} if there are  $z_1,\ldots, z_k\in [n]$ such that $(z_1,\ldots, z_k)$ is a bad cycle in $G$.
\end{enumerate}
\end{definition}  

\begin{lemma}\label{HnotinC_r}
$H\in M_r(4)$, and for any integers $n\geq k\geq 4$, if $G\in M_r(n)$ contains a bad cycle, then $G\in M_r(n)\setminus C_r(n)$.  In particular, if $G\in M_r(n)$ and $G$ contains a copy of $H$, then $G\notin C_r(n)$.
\end{lemma}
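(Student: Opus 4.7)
The plan is to split the proof into three short pieces. First, I would verify directly that $H \in M_r(4)$ by checking each of the four triangles of $H$ against the triangle inequality, using that $r$ is odd so $m(r)-1 = (r-1)/2$. The triangles $\{1,2,3\}$ and $\{2,3,4\}$ carry edge labels $(m(r)-1, m(r)-1, r-1)$, which are consistent since $2(m(r)-1) = r-1$. The triangles $\{1,2,4\}$ and $\{1,3,4\}$ carry labels $(m(r)-1, r-1, r)$, for which $(m(r)-1)+(r-1) = m(r)+r-2 \geq r$ (as $m(r) \geq 2$) and $|(r-1)-(m(r)-1)| = m(r)-1 \leq r$, so the triangle inequality holds in each case. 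Hence $H$ contains no violating triangle.

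For the main implication, I would argue by contradiction: suppose $G \in M_r(n)$ contains a bad cycle $(z_1,\ldots,z_k)$ and yet lies in $C_r(n)$. By the definition of $C_r(n)$ for odd $r$, there is a partition $V_1,\ldots,V_t$ of $[n]$ such that intra-part edges carry colors in $[m(r)-1,r-1]$ and inter-part edges carry colors in $[m(r),r]$. The crucial observation is that the color $m(r)-1$ forces its endpoints into a common part (since $m(r)-1 \notin [m(r),r]$), while the color $r$ forces its endpoints into distinct parts (since $r \notin [m(r)-1,r-1]$). Applying the first fact to each consecutive pair $z_iz_{i+1}$ and iterating (by transitivity of ``belonging to the same part''), all of $z_1,\ldots,z_k$ lie in a single part $V_j$; but then $z_1z_k$ is an intra-part edge with color $r$, the desired contradiction.

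Finally, the ``in particular'' clause is immediate: if $G$ contains a copy of $H$ witnessed by distinct vertices $y_1,y_2,y_3,y_4 \in [n]$, then unravelling the definition of ``contains a copy'' gives $d^G(y_1,y_2) = d^G(y_2,y_3) = d^G(y_3,y_4) = m(r)-1$ and $d^G(y_1,y_4) = r$, so $(y_1,y_2,y_3,y_4)$ is a bad cycle of length $4$ in $G$, and the previous paragraph applies to yield $G \notin C_r(n)$.

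I do not anticipate any real obstacle here: the entire lemma rests on the incompatibility, inside $C_r(n)$ for odd $r$, of the color $m(r)-1$ (which can only occur between vertices sharing a part) with the color $r$ (which can only occur between vertices in distinct parts), a conflict that a bad cycle is forced to exhibit along a single connected path.
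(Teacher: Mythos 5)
Your proof is correct and follows essentially the same route as the paper's: you verify $H\in M_r(4)$ directly, and you derive the contradiction from the fact that in any $C_r(n)$-partition the color $m(r)-1$ forces a shared part while the color $r$ forces distinct parts, so a bad cycle is impossible — which is exactly what the paper does via its ``component'' terminology. The only cosmetic difference is that the paper just says the verification of $H\in M_r(4)$ ``can be checked easily'' while you spell it out.
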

\begin{proof}
That $H$ contains no violating triangles and is therefore in $M_r(4)$ can be checked easily.  Suppose now $n\geq k\geq 4$, $G\in M_r(n)$, and $(y_1,\ldots, y_k)$ is a bad cycle in $G$.  Suppose for a contradiction that $G\in C_r(n)$.  Then there is a partition $\mathcal{P}=\{V_1,\ldots, V_t\}$ of $[n]$ such that for all $xy\in {[n]\choose 2}$,
\[ d^G(x,y) \in 
\begin{cases} 
[m(r),r] &\textnormal{ if }xy\in E(V_i, V_j) \textnormal{ some }1\leq i<j\leq t,\\
[m(r)-1,r-1]&\textnormal{ if }xy\in {V_i\choose 2} \textnormal{ some }1\leq i\leq t.
\end{cases}
\]
Note that for all $xy\in {[n]\choose 2}$, if $x$ and $y$ are in the same component of $G$, then they are in the same element of $\mathcal{P}$.  Fix $1\leq i\leq t$ such that $y_1\in V_i$.  Then $d^G(y_1,y_2)=\ldots =d^G(y_{k-1},y_k)=m(r)-1$ implies $y_k$ is in the same component of $G$ as $y_1$, so $y_k\in V_i$.  This implies $d^G(y_1,y_k) \in [m(r)-1,r-1]$.  Because $(y_1,\ldots, y_k)$ is a bad cycle in $G$, by definition, $d^G(y_1,y_k)=r$, a contradiction.  Since $H$ contains a bad cycle, it follows immediately that if $G\in M_r(n)$ contains a copy of $H$, then $G\notin C_r(n)$.
\end{proof}
Suppose $n$ is an integer and $G\in M_r(n)$.  Given $X\subseteq [n]$, let $G[X]$ denote the simple complete $r$-graph with vertex set $X$ such that for all $xy \in E(X)$, $d^{G[X]}(x,y)=d^G(x,y)$.  Set
\begin{align*}
D_1(n)=&\{G\in C_r(n):\textnormal{ the c.o.c.d. of }G\textnormal{ has at least }4\textnormal{ small components}\}, \\
D_2(n)=&\{G\in C_r(n)\setminus D_1(n): \textnormal{ if }\{y_1,\ldots, y_4\}\textnormal{ are the least four elements of }ML(G), \\&\textnormal{ then }G[ML(G)\setminus \{y_1,\ldots, y_4\}] \textnormal{ has at most }3\textnormal{ large components}\},\\
D_3(n)=&C_r(n)\setminus (D_1(n)\cup D_2(n)).
\end{align*}
We are now ready to define our map $f$.
\begin{definition}
Given $n\geq 4$ and $G\in C_r(n)$, define $f(G)$ to be the simple complete $r$-graph with vertex set $[n]$ satisfying the following, where $Y_1,\ldots, Y_u$ denotes the c.o.c.d. of $G$.
\begin{enumerate}
\item If $G\in D_1(n)$, set $Y=\bigcup_{i=1}^4 Y_i$, and for each $i\in [4]$, set $y_i=\min Y_i$.  Given $xy\in {[n]\choose 2}$, set
\[ d^{f(G)}(x,y)=
\begin{cases}
d^{H}(i,j) &\textnormal{ if }xy=y_iy_j \in {\{y_1,\ldots, y_4\}\choose 2},\\
r-1  &\textnormal{ if }xy\in {Y\choose 2}\setminus  {\{y_1,\ldots, y_4\}\choose 2},\\
d^G(x,y) &\textnormal{ otherwise}.
\end{cases}
\]

\item If $G\in D_2(n)$, let $s\in [4]$ be such that $Y_s=ML(G)$ and let $y_1<y_2<y_3< y_4$ be the least four elements of $Y_s$.  Set $Y=\bigcup_{i=1}^{s-1}Y_i$ and $Y_s'=Y_s\setminus \{y_1,\ldots, y_4\}$.  Given $xy\in {[n]\choose 2}$, set
\[ d^{f(G)}(x,y)=
\begin{cases}
d^{H}(i,j) &\textnormal{ if }xy=y_iy_j \in {\{y_1,\ldots, y_4\}\choose 2},\\
r  &\textnormal{ if }xy\in {Y\choose 2}\cup E(Y, \{y_1,\ldots, y_4\})\\
d^G(x,y)+1 &\textnormal{ if } xy=y_iz \textnormal{ for some }y_i\in \{y_1,\ldots, y_4\}\textnormal{ and }z\in Y_s',\\
d^G(x,y) &\textnormal{ otherwise}.
\end{cases}
\]
Note that any small component of $f(G)$ is either a singleton coming from $Y$, or is a small component of $f(G)[Y_s']$.  If $X$ is a small component of $f(G)[Y_s']$, then since $X$ and $\{y_1,\ldots, y_4\}$ were in the same component of $G$, there must be $x\in X$ and $y\in \{y_1,\ldots, y_4\}$ such that $d^G(x,y)=m(r)-1$, and thus, $d^{f(G)}(x,y) = m(r)$.  In particular, if $X=\{x\}$ is a singleton, then for some $y\in \{y_1,\ldots, y_4\}$, $d^{f(G)}(x,y) = m(r)$. On the other hand, if $X=\{x\}$ is a singleton coming from $Y$, then by construction, for all $y\in \{y_1,\ldots, y_4\}$, $d^{f(G)}(x,y) =r$.
\item If $G\in D_3(n)$, let $s\in [4]$ be such that let $Y_s=ML(G)$ and let $y_1<y_2<y_3< y_4$ be the least four elements of $Y_s$.  Set $Y=\bigcup_{i=1}^{s-1} Y_i$ and $Y_s'=Y_s\setminus\{y_1,\ldots, y_4\}$.  Let $Z^1,\ldots, Z^k$ be the large components of $G[Y_s']$ listed so that $Z^1<_*\ldots <_*Z^k$. Note that for each $ij\in {[k]\choose 2}$ and $xy\in E(Z^i,Z^j)$, because $Z^i$ and $Z^j$ are different components in $G[Y_s']$, $d^G(x,y)\neq m(r)-1$.  Since $Z^i$ and $Z^j$ are contained in the same component of $G$, we know $d^G(x,y)\in [m(r)-1,r-1]$.  Therefore we must have $d^G(x,y)\in [m(r),r-1]$.   Enumerate each $Z^i = \{z^i_1,\ldots, z^i_{|Z^i|}\}$ in increasing order.

We inductively build a sequence $i_1, \ldots, i_k$ with the following properties:
\begin{enumerate}[(i)]
\item For each $1\leq j\leq k$, $i_j\in [2r]$.
\item For each $2\leq j\leq k-1$, $d^G(z^{j-1}_{i_{j-1}}, z^{j}_{i_{j}}) = |i_j- i_{j+1}|\in [r]$.
\end{enumerate}
Set $i_1=i_2=1$,   $i_3=d^G(z^1_{i_1},z^2_{i_2})+1$.  Notice $1\leq i_3\leq (r-1)+1 =r$, so $i_3 \in [2r]$, and by construction, $i_3-i_2=d^G(z_{i_1}^1,z_{i_2}^2)\in [r]$, so (i) and (ii) are satisfied for $j=1,2$.  Suppose we've defined $i_1,\ldots, i_j$ for $2\leq j<k$ such that (i) and (ii) hold for $j-1$.  Set
\[
i_{j+1}=
\begin{cases}
i_j+d^G(z^{j-1}_{i_{j-1}}, z^j_{i_j})&\textnormal{ if }i_j\leq r, \\
i_j-d^G(z^{j-1}_{i_{j-1}}, z^j_{i_j})&\textnormal{ if }i_j> r.
\end{cases}
\] 
By the induction hypothesis, $i_j \in [2r]$, so by the above definition, if $i_j\leq r$, then $i_{j+1}\in [2,2r]$ and if $i_j>r$, then $i_{j+1} \in [1,2r-1]$.  In either case, $i_{j+1}\in [2r]$ so (i) is satisfied for $j+1$.  We also have that (ii) is satisfied by $j+1$ since by definition, 
$$
|i_{j}-i_{j+1}|=d^G(z^{j-1}_{i_{j-1}}, z^j_{i_j})\in [r].
$$
This completes the construction of $i_1,\ldots, i_k$.  Given $xy\in {[n]\choose 2}$, set
\[ d^{f(G)}(x,y)=
\begin{cases}
r  &\textnormal{ if }xy\in {Y\choose 2}\cup E(Y,\{y_1,\ldots, y_4\})\cup \{z^1_{i_1}z^k_{i_k}\} , \\
d^{H}(i,j) &\textnormal{ if }xy=y_iy_j \in {\{y_1,\ldots, y_4\}\choose 2},\\
m(r)-1 & \textnormal{ if }xy=z^j_{i_j}z^{j+1}_{i_{j+1}}\in \{z^1_{i_1}z^2_{i_2},\ldots, z^{k-1}_{i_{k-1}}z^k_{i_k}\},\\
d^G(x,y)+1 &\textnormal{ if } xy=y_iz \textnormal{ for some }y_i\in \{y_1,\ldots, y_4\}\textnormal{ and }z\in Y_s',\\
d^G(x,y) & \textnormal{ otherwise}.
\end{cases}
\]
\end{enumerate}
Note that the same remarks as above for the case when $G\in D_2(n)$ apply here.  That is, if $X=\{x\}$ is a singleton component of $f(G)$, then either $d^{f(G)}(x,y)=r$ for all $y\in \{y_1,\ldots, y_4\}$ in which case $x$ is an element in a small component of $G$, or there is $y\in \{y_1,\ldots, y_4\}$ such that $d^{f(G)}(x,y) =m(r)$, in which case $x$ is an element of $ML(G)\setminus \{y_1,\ldots, y_4\}$.
\end{definition}

\begin{lemma}
Let $n\geq 4$ be an integer and $G\in C_r(n)$.  Then $f(G)\in M_r(n)\setminus C_r(n)$.
\end{lemma}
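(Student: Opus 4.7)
The plan is to show, for each of the three cases $G \in D_1(n)$, $D_2(n)$, or $D_3(n)$, that (a) $f(G) \in M_r(n)$, and (b) $f(G) \notin C_r(n)$. Part (b) is uniform across cases: in every case the construction sets $d^{f(G)}(y_i,y_j) = d^H(i,j)$ for all $i,j \in \{1,2,3,4\}$, so $y_i \mapsto i$ is an isomorphism from $f(G)[\{y_1,\ldots,y_4\}]$ onto $H$. Thus $f(G)$ contains a copy of $H$, which carries the bad cycle $(1,2,3,4)$, so Lemma~\ref{HnotinC_r} yields $f(G) \notin C_r(n)$ as soon as we have established $f(G) \in M_r(n)$.

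For part (a), I plan to verify that $f(G)$ contains no violating triangle, arguing by case analysis on how the triangle's vertex set meets the natural partition induced by the construction. Four ingredients recur throughout: (i) $H \in M_r(4)$, so sub-triangles on $\{y_1,\ldots,y_4\}$ are safe; (ii) since $r$ is odd, $2m(r)-1 = r$, so any triple of values in $[m(r),r]$ satisfies the triangle inequality; (iii) in $G \in C_r(n)$, intra-component distances lie in $[m(r)-1,r-1]$ while inter-component distances lie in $[m(r),r]$; and (iv) shifting both edges out of a common apex by $+1$ preserves the triangle inequality at triangles based there, because $|(\alpha+1)-(\beta+1)| = |\alpha-\beta|$ and $\gamma \leq \alpha+\beta \leq \alpha+\beta+2$ remain valid whenever the original triple $(\alpha,\beta,\gamma)$ did.

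In case $D_1$, every modified edge lies inside $Y := Y_1 \cup \cdots \cup Y_4$ and takes a value in $\{m(r)-1,r-1,r\}$; triangles with at most one vertex in $Y$ are inherited unchanged from $G$, while triangles with more vertices in $Y$ are either sub-triangles of $H$ (safe by (i)) or have two edges leaving $Y$ whose values lie in $[m(r),r]$ by (iii), so (ii) applies. In case $D_2$, partition $[n] = A \cup B \cup C \cup D$ with $A = \{y_1,\ldots,y_4\}$, $B = Y$, $C = Y_s'$, and $D = [n]\setminus(A\cup B\cup C)$; only edges inside $A \cup B$ and between $A$ and $B \cup C$ are modified, and triangles avoiding these edges are unchanged. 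The remaining finite list of triangle types is then dispatched using (i), (ii), (iii), and (iv), with the slightly delicate case of an apex in $A$ and the other two vertices in $C$ handled by (iv).

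Case $D_3$ uses the same decomposition, so all triangles not involving the cycle edges $\{z^j_{i_j} z^{j+1}_{i_{j+1}} : 1 \le j < k\}$ or the closing edge $z^1_{i_1} z^k_{i_k}$ are handled exactly as in $D_2$. For the new triangles, two observations suffice. First, a triangle $(z^{j-1}_{i_{j-1}}, z^j_{i_j}, z^{j+1}_{i_{j+1}})$ built from two consecutive cycle edges has distances $(m(r)-1, m(r)-1, b)$ with $b = d^G(z^{j-1}_{i_{j-1}}, z^{j+1}_{i_{j+1}}) \in [m(r), r-1]$ by (iii), and $b \leq r-1 = 2(m(r)-1)$ is automatic. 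Second, for any triangle $(z^1_{i_1}, z^k_{i_k}, w)$ involving the closing $r$-edge, the endpoints $z^1_{i_1}$ and $z^k_{i_k}$ lie in distinct components $Z^1 \neq Z^k$ of $G[Y_s']$, so no single $w$ can have $G$-distance $m(r)-1$ to both; hence $d^{f(G)}(z^1_{i_1},w) + d^{f(G)}(z^k_{i_k},w) \geq (m(r)-1) + m(r) = r$, which secures $r \leq$ sum of the other two sides. The main obstacle will be the bookkeeping in $D_3$: replacing intra-component distances by the small value $m(r)-1$ threatens to break the ``$|a-b| \leq c$'' direction in adjacent triangles, and the saving mechanism is uniformly the component structure of $G[Y_s']$ --- the endpoints of every newly introduced $m(r)-1$ or $r$ edge lie in distinct components, so no vertex is simultaneously close to both, keeping the relevant sums of side lengths $\geq r$. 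The role of the index sequence $i_1,\ldots,i_k$ is only to exhibit representatives $z^j_{i_j}\in Z^j$ with $i_j\in[2r]$ (possible since $|Z^j|\geq 2r$); the triangle-inequality verification itself is independent of the specific indices chosen.
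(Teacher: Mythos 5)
Your proposal takes essentially the same route as the paper's proof: establish $f(G)\notin C_r(n)$ from the embedded copy of $H$ via Lemma~\ref{HnotinC_r}, then show $f(G)\in M_r(n)$ by checking that every triangle touching a modified edge is non-violating, broken into cases according to how the triangle meets $\{y_1,\ldots,y_4\}$, $Y$, and $Y_s'$, and using the same four ingredients (metricity of $H$, the odd-$r$ identity $2m(r)-1=r$, the intra/inter-component distance ranges, and stability under the uniform $+1$ shift at an apex). One caveat on the $D_3$ bookkeeping: ``two observations'' slightly undersells the case list --- a triangle with exactly one non-closing cycle edge together with a $w\notin\{z^1_{i_1},\ldots,z^k_{i_k}\}$, or a cycle/closing edge together with a vertex outside $Y_s'$, is not literally one of your two cases, and the second observation should be phrased in terms of $f(G)$-distances rather than $G$-distances (since, e.g., when $w=z^2_{i_2}$ the edge $z^1_{i_1}w$ has itself been lowered to $m(r)-1$); still, the component dichotomy and range arguments you list dispatch these residual cases just as the paper does when it declares its remaining cases ``similar or trivial.''
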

\begin{proof}
By definition, $f(G)$ must contain a copy of $H$, so $f(G)$ is not in $C_r(n)$ by Lemma \ref{HnotinC_r}.  We now show $f(G)\in M_r(n)$. We leave the verification of the case when $G\in D_1(n)$ to the reader, since it requires only the simplest types of arguments which we show below for the other cases.  So assume $G\in D_2(n)\cup D_3(n)$.  Let $Y_1,\ldots, Y_u$ be the c.o.c.d. of $G$, let $s$ be such that $Y_s=ML(G)$, and let $y_1<\ldots<y_4$ be the least elements of $Y_s$.  Set $Y=\bigcup_{i=1}^{s-1}Y_i$ and $Y_s' = Y_s\setminus \{y_1,\ldots, y_4\}$.   It suffices to show that if $x,y,z\in [n]$ are pairwise distinct and $E(\{x,y,z\})\cap \Delta(G,f(G))\neq \emptyset$, then $\{x,y,z\}$ is not a violating triangle in $f(G)$, or equivalently, $(d^{f(G)}(x,y), d^{f(G)}(y,z), d^{f(G)}(x,z))$ is not a violating triple.  We consider only the cases where $\{x,y,z\}\subseteq Y_s$, as the rest of the cases are similar to these or trivial.  

Fix $x,y,z\in [n]$ pairwise distinct such that $E(\{x,y,z\})\cap \Delta(G,f(G))\neq \emptyset$, $\{x,y,z\}\subseteq Y_s$.  If $\{x,y,z\}\subseteq \{y_1,\ldots, y_4\}$, let $i,j,k\in [4]$ be such that $x=y_i$, $y=y_j$, $z=y_j$.  Then by definition of $f(G)$, $\{x,y,z\}$ is a violating triangle in $f(G)$ if and only if  $\{i,j,k\}$ is a violating triangle in $H$.  Since, by Lemma \ref{HnotinC_r}, $H$ contains no violating triangles, we are done.  If $x,y\in \{y_1,\ldots, y_4\}$ and $z\in Y_s'$ or if $x,y\in Y_s'$ and $z\in \{y_1,\ldots, y_4\}$, then by definition, 
$$
d^{f(G)}(x,z)=d^G(x,z)+1, \quad d^{f(G)}(y,z)=d^G(y,z)+1, \textnormal{ and }\quad d^{f(G)}(x,y)\in [m(r)-1,r].
$$
Because $x,y,z$ were in the same component of $G$, $d^{G}(x,z), d^{G}(y,z)\in [m(r)-1,r-1]$.  Therefore
$$
(d^{f(G)}(x,z), d^{f(G)}(y,z),d^{f(G)}(x,y)) \in [m(r),r]\times [m(r),r]\times [m(r)-1,r],
$$
which contains no violating triples.  Up to relabeling, this leaves us with the case where $\{x,y,z\}\subseteq Y_s'$.  This case is vacuous when $G\in D_2(n)$, because for $G\in D_2(n)$, $E(Y_s')\cap \Delta(G,f(G))=\emptyset$.  So we are left with the case when $G\in D_3(n)$ and $\{x,y,z\}\subseteq Y_s'$.  

Let $Z^1<_*\ldots <_*Z^k$ be the c.o.c.d. of $G[Y_s']$, and for $1\leq j\leq k$, let $z^j_{i_j}\in Z^j$ be as in the definition of $f(G)$.  We must have $E(\{x,y,z\})\cap \{z^1_{i_1}z^2_{i_2},\ldots, z^{k-1}_{i_{k-1}}z^{k}_{i_k}, z^1_{i_1}z^k_{i_k}\}\neq \emptyset$ since otherwise $E(\{x,y,z\})\cap \Delta(G,f(G))= \emptyset$.  Assume that $xy \in \{z^1_{i_1}z^2_{i_2},\ldots, z^{k-1}_{i_{k-1}}z^{k}_{i_k}, z^1_{i_1}z^k_{i_k}\}$, and note this implies $d^{f(G)}(x,y)\in \{r,m(r)-1\}$. 

If $z\in \{z^1_{i_1},\ldots, z^k_{i_k}\}$, and $xz, yz\notin \{z^1_{i_1}z^2_{i_2},\ldots, z^{k-1}_{i_{k-1}}z^{k}_{i_k}, z^1_{i_1}z^k_{i_k}\}$, then by definition of $f$, 
$$
d^{f(G)}(x,z)=d^G(x,z) \quad\textnormal{ and } \quad d^{f(G)}(y,z)=d^G(y,z).
$$
Because $z$ is the same component of $G$ as $x$ and $y$, $d^G(x,z), d^G(y,z) \in [m(r)-1,r-1]$.  Because $z$ is in a different component of $G[Y_s']$ than both $x$ and $y$, $d^{G}(x,z), d^{G}(y,z) \neq m(r)-1$.  Therefore 
\begin{align*}
d^G(x,z), d^G(y,z)& \in [m(r),r-1], \textnormal{ so }\\
(d^{f(G)}(x,z), d^{f(G)}(y,z),d^{f(G)}(x,y)) & \in [m(r),r-1]\times [m(r),r-1]\times  \{m(r)-1,r\} ,
\end{align*}
which contains no violating triples.  If $z\in \{z^1_{i_1},\ldots, z^k_{i_k}\}$ and $xz\in \{z^1_{i_1}z^2_{i_2},\ldots, z^{k-1}_{i_{k-1}}z^{k}_{i_k}, z^1_{i_1}z^k_{i_k}\}$, then since $k\geq 4$, this implies $yz \notin \{z^1_{i_1}z^2_{i_2},\ldots, z^{k-1}_{i_{k-1}}z^{k}_{i_k}, z^1_{i_1}z^k_{i_k}\}$.  By definition, $d^{f(G)}(x,z)\in \{m(r)-1,r\}$, and as above, because $y$ and $z$ are in the same component of $G$ but different components of $G[Y_s']$, $d^{f(G)}(y,z)=d^G(y,z) \in [m(r),r-1]$.  Therefore 
$$
(d^{f(G)}(x,z), d^{f(G)}(y,z), d^{f(G)}(x,y)) \in \{m(r)-1,r\}\times [m(r),r-1]\times \{m(r)-1,r\},
$$
which contains no violating triples.  Up to relabeling we have now covered the cases where $z\in \{z^1_{i_1},\ldots, z^k_{i_k}\}$, so assume $z\in Y'_s \setminus \{ z^1_{i_1},\ldots, z^k_{i_k}\}$.  Then by definition,
\begin{align*}
d^{f(G)}(x,z)=d^G(x,z), \text{ }d^{f(G)}(y,z)=d^G(y,z) \in [m(r)-1,r-1].
\end{align*}
If $z$ is in the same component of $G[Y_s']$ as $x$, then $y$ and $z$ are in the same component of $G$ but different components of $G[Y_s']$, so $d^{G}(y,z)\neq m(r)-1$.  Therefore we have that
$$
(d^{f(G)}(x,z), d^{f(G)}(y,z), d^{f(G)}(x,y)) \in  [m(r)-1,r-1]\times [m(r),r-1]\times \{m(r)-1,r\},
$$
which contains no violating triples.  A similar argument covers the case where $z$ is instead in the same component of $G[Y_s']$ as $y$.  If $z$ is in a different component of $G[Y_s']$ than $x$ and $y$, then 
\begin{align*}
 d^G(x,z), d^G(y,z) &\neq m(r)-1 \textnormal{ so }\\
(d^{f(G)}(x,z), d^{f(G)}(y,z), d^{f(G)}(x,y)) &\in [m(r),r-1]\times [m(r),r-1]\times \{m(r)-1,r\} ,
\end{align*}
which contains no violating triples.  This completes the proof.
\end{proof}

We will use the following lemmas.  Given $K\subseteq M_r(n)$, set $f^{-1}(K)=\{G\in C_r(n): f(G)\in K\}$.

\begin{lemma}\label{D_1lemma}
Let $n\geq 4$ be an integer.  For all $G\in f(D_1(n))$, there is $E\subseteq {[n]\choose 2}$ such that $|E|\leq {4+8r\choose 2}$ and for all $G'\in f^{-1}(G)\cap D_1(n)$, $\Delta(G,G')\subseteq E$.
\end{lemma}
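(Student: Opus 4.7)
The plan is to pinpoint, from $G$ alone, a vertex set $W\subseteq[n]$ with $|W|\leq 8r-4$ such that $Y(G')\subseteq W$ for every preimage $G'\in f^{-1}(G)\cap D_1(n)$. Since the definition of $f$ on $D_1(n)$ modifies only edges inside $\binom{Y(G')}{2}$, I can then take $E=\binom{W}{2}$, which has size at most $\binom{8r-4}{2}\leq\binom{4+8r}{2}$.

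First I would show that $G$ has a \emph{unique} $4$-vertex connected component inducing a copy of $H$; call it $\mathbf{y}(G)$. For existence, fix any preimage $G'$: by the definition of $f$ the set $\{y_1,y_2,y_3,y_4\}$ induces $H$ and is internally connected via the three $m(r)-1$ edges $y_1y_2$, $y_2y_3$, $y_3y_4$; it is isolated from the rest since for $z\in Y\setminus\{y_1,\ldots,y_4\}$ one has $d^G(y_i,z)=r-1\neq m(r)-1$, while for $w\notin Y$ one has $d^G(y_i,w)=d^{G'}(y_i,w)\in[m(r),r]$ because $y_i$ and $w$ lie in distinct components of $G'\in C_r(n)$. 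For uniqueness, any other $4$-vertex component of $G$ is disjoint from $\{y_1,\ldots,y_4\}$ and therefore coincides with some $Y_j$, $j\geq 5$; but $f$ leaves edges inside $Y_j$ unchanged and the internal distances of a component of $G'\in C_r(n)$ lie in $[m(r)-1,r-1]$, so no such $Y_j$ can contain the distance-$r$ edge required by $H$.

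Next I would analyze an arbitrary preimage $G'\in f^{-1}(G)\cap D_1(n)$ by looking at the sizes of its first four components $Y_1,\ldots,Y_4$. Case A: every $|Y_i|=1$; then $T:=Y(G')\setminus\{y_1,\ldots,y_4\}=\emptyset$ and $Y(G')=\mathbf{y}(G)$. Case B: $|Y_4|\geq 2$; then for every $j\geq 5$, $Y_4<_*Y_j$ forces $|Y_j|\geq 2$, so $G'$ has no singleton component outside $Y(G')$ (any stray singleton $\{z\}$ would satisfy $\{z\}<_*Y_4$ and contradict that $Y_1,\ldots,Y_4$ are the first four c.o.c.d.\ entries of $G'$). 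Applying $f$ in Case B, the vertices $y_1,\ldots,y_4$ merge into the $H$-component, every element of $T$ becomes a singleton of $G$, and all remaining (size $\geq 2$) components of $G'$ persist; letting $S(G)$ denote the singletons of $G$, this shows $T=S(G)$ and $Y(G')=\mathbf{y}(G)\cup S(G)$, while $|S(G)|=|T|\leq|Y(G')|-4\leq 4(2r-1)-4=8r-8$.

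Finally, if $|S(G)|\leq 8r-8$ set $W=\mathbf{y}(G)\cup S(G)$; otherwise Case B is impossible by the bound just derived, so every preimage lies in Case A and I set $W=\mathbf{y}(G)$. In either situation $Y(G')\subseteq W$ for every $G'\in f^{-1}(G)\cap D_1(n)$, and $|W|\leq 4+(8r-8)=8r-4\leq 4+8r$. Taking $E=\binom{W}{2}$ concludes the proof. The main obstacle is the rigid Case A/Case B dichotomy: it hinges on the fact that a stray singleton $\{z\}\subseteq[n]\setminus Y(G')$ would be strictly smaller than $Y_4$ in the canonical order as soon as $|Y_4|\geq 2$, thereby forcing $T$ to coincide exactly with $S(G)$ in Case B and pinning down $Y(G')$ across all preimages.
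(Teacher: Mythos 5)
Your proof is correct and follows essentially the same route as the paper's: both identify the unique copy of $H$ as a component of $G$ (the paper's $X_w$, your $\mathbf{y}(G)$), observe that $Y(G')$ is contained in $\mathbf{y}(G)$ together with the singleton components of $G$, and split on whether the number of singletons of $G$ is small (take $E$ to be all pairs inside $\mathbf{y}(G)\cup S(G)$) or large (forcing $Y_1,\ldots,Y_4$ to all be singletons via the $<_*$ ordering, so $E=\binom{\mathbf{y}(G)}{2}$). The paper's threshold is $8r$ while yours is $8r-8$ from the Case B analysis showing $S(G)=T$, but the pigeonhole idea and the resulting bound $\binom{4+8r}{2}$ are the same.
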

\begin{proof}
Suppose $G\in f(D_1(n))$ and $X_1,\ldots, X_l$ is the c.o.c.d. of $G$.  Suppose $X_1,\ldots, X_s$ enumerate the components which are singletons and $X_w$ is the unique component such that $G[X_w]$ contains a copy of $H$.  Then by definition of $f$, for any $G'\in C_r(n)$, $f(G')=G$ implies
$$
\Delta(G,G')\subseteq E(X_w\cup \bigcup_{i=1}^s X_i).
$$
If $s\leq 8r$, then set $E=E(X_w\cup \bigcup_{i=1}^s X_i)$.  Since in this case, 
$$
|E(X_w\cup \bigcup_{i=1}^s X_i)|\leq {4+s \choose 2} \leq {4+8r\choose 2},
$$
we are done.  Assume now $s>8r$.  Let $G'\in f^{-1}(G)\cap D_1(n)$ and let $Y_1,\ldots, Y_u$ be the c.o.c.d. of $G'$.  For $i\in [4]$, let $\{y_i\}=\min Y_i$ and $Y=\bigcup_{i=1}^4 Y_i$.  By definition of $f$, $\Delta(G, G') \subseteq {Y\choose 2}$.  Note that for each $1\leq j\leq 4$, $Y_j$ has size at most $2r-1$, so $|Y|\leq 4(2r-1)<8r$.  Since $s>8r$, there is some $1\leq i\leq s$ such that $X_i\cap Y=\emptyset$.  Combining this with the fact that $\Delta(G, G') \subseteq {Y\choose 2}$, yields that 
$$
X_i \in \{Y_5,\ldots, Y_u\},
$$
say $X_i=Y_k$, some $5\leq k\leq u$.  Then $|Y_k|=1$ and $Y_k>_* Y_4>_*\ldots>_* Y_1$ implies by definition of $<_*$ that $|Y_4|=|Y_3|=|Y_2|=|Y_1|=1$.  Therefore $Y= \{y_1,\ldots, y_4\}=X_w$, and $\Delta(G',G)\subseteq {X_w\choose 2}$.  Setting $E={X_w\choose 2}$ we are done, as $|X_w|=4$.

\end{proof}

\begin{lemma}\label{D_2lemma}
Let $n\geq 4$ be an integer.  For all $G\in f(D_2(n))$, there are $G_1, \ldots, G_8\in D_2(n)$ and $E\subseteq {[n]\choose 2}$ such that $f(G_1)=\ldots= f(G_8)=G$, $|E|\leq {4+6r\choose 2}$, and for all $G'\in f^{-1}(G)\cap D_2(n)$, there is $1\leq t\leq 8$ such that $\Delta(G_t,G')\subseteq E$.
\end{lemma}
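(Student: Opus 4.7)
The plan is to show that from $G$ alone one can uniquely identify the ordered tuple $(y_1,y_2,y_3,y_4)$ and the set $Y$, and that the only remaining ambiguity in reconstructing any preimage $G'\in f^{-1}(G)\cap D_2(n)$ reduces to at most $8$ choices for the associated set $Y_s'$; taking $E=\binom{Y\cup\{y_1,\ldots,y_4\}}{2}$ then forces $\Delta(G_t,G')\subseteq E$ for the matching candidate $G_t$.

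First I would establish uniqueness of $\{y_1,\ldots,y_4\}$ and $Y$. By the definition of $f$ on $D_2(n)$, no $m(r)-1$-edge of $G$ crosses between $\{y_1,\ldots,y_4\}$ and its complement: edges to $Y$ are set to $r$, edges to $Y_s'$ are incremented to at least $m(r)$, and edges to any $Y_j$ with $j>s$ are distances between different components of $G'$ and so already at least $m(r)$. Any induced copy of $H$ in $G$ needs a $3$-edge path of $m(r)-1$-edges whose endpoints lie at distance $r$; outside $\{y_1,\ldots,y_4\}$ no such path exists (distances within components of $G$ in its complement are bounded by $r-1$), so the induced $H$-copy is unique. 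The nontrivial automorphism of $H$ swaps $\{y_1,y_4\}$ setwise with itself and $\{y_2,y_3\}$ setwise with itself; combined with the distance-multiset classification and the natural ordering $y_1<y_2<y_3<y_4$, this forces the endpoint pair $\{y_1,y_4\}$ to be the overall min and max of the four vertices, pinning down the labeling. Next, $Y$ equals the set of singleton components $x$ of $G$ with $d^G(x,y_i)=r$ for every $i$: vertices of $Y$ satisfy this by construction, while every other singleton of $G$ is a singleton component of $G'[Y_s']$ whose $m(r)-1$-neighbor in $Y_s$ must lie in $\{y_1,\ldots,y_4\}$, producing an $m(r)$-edge in $G$ that disqualifies it.

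With $\{y_1,\ldots,y_4\}$ and $Y$ fixed, the only remaining ambiguity in $G'$ on edges outside $\binom{Y\cup\{y_1,\ldots,y_4\}}{2}$ is the set $Y_s'$ associated with $G'$, which must be a union of components of $G[[n]\setminus(Y\cup\{y_1,\ldots,y_4\})]$. Small components are forced into $Y_s'$ (since each $Y_j$ with $j>s$ is large), and large components with no $m(r)$-edge to $\{y_1,\ldots,y_4\}$ are forced out. Let $\mathcal{L}_1$ denote the large components with at least one $m(r)$-edge to $\{y_1,\ldots,y_4\}$, and let $T_{G'}$ denote the subset of $\mathcal{L}_1$ lying in $Y_s'$; by the definition of $D_2(n)$, $|T_{G'}|\leq 3$. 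The crucial combinatorial step is that $T_{G'}$ must be a $<_*$-prefix of the large components of $G[[n]\setminus(Y\cup\{y_1,\ldots,y_4\})]$: if $T_{G'}$ contained some $C$ while excluding some $<_*$-smaller $C'$, then $|Y_s|\geq 4+|C|\geq 4+|C'|>|C'|$, forcing $Y_s>_* C'$ and contradicting $Y_s=ML(G')$. Hence $T_{G'}\subseteq\{C_{(1)},C_{(2)},C_{(3)}\}$, the three $<_*$-smallest large components of $G[[n]\setminus(Y\cup\{y_1,\ldots,y_4\})]$, giving at most $2^3=8$ possible values.

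For each of the $8$ subsets $T\subseteq\{C_{(1)},C_{(2)},C_{(3)}\}$, define a candidate $G_t$ whose associated $Y_s'$ equals the union of the forced small components and $T$, assigning arbitrary $C_r(n)$-compatible distances within $\binom{Y\cup\{y_1,\ldots,y_4\}}{2}$ (partition $Y$ into at most $3$ small components each of size at most $2r-1$, which is possible since $|Y|\leq 6r-3$, ensuring $G_t\notin D_1(n)$), and copying $G$ on all other edges with a $-1$ decrement on $E(\{y_1,\ldots,y_4\},Y_s')$. Any $T$ that fails to produce a valid $G_t\in D_2(n)$ (for instance if the minimality of $Y_s$ fails) is replaced by an arbitrary element of $f^{-1}(G)\cap D_2(n)$. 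Taking $E=\binom{Y\cup\{y_1,\ldots,y_4\}}{2}$ gives $|E|\leq\binom{|Y|+4}{2}\leq\binom{6r+1}{2}\leq\binom{4+6r}{2}$, and by construction any $G'\in f^{-1}(G)\cap D_2(n)$ matches the candidate $G_t$ whose associated $T$ equals $T_{G'}$ on every edge outside $E$, so $\Delta(G_t,G')\subseteq E$. The main obstacle is the minimality argument establishing the prefix property (including careful treatment of the equal-size case of $<_*$) together with the verification that each of the $8$ candidate graphs can indeed be realized in $D_2(n)$.
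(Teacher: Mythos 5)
Your proposal is essentially the same approach as the paper's, and the key ideas are all present and correct: you recover $\{y_1,\ldots,y_4\}$ uniquely as the only component of $G$ isomorphic to $H$ (using that within components of a $C_r(n)$-graph all distances lie in $[m(r)-1,r-1]$, so no distance $r$ can occur there), you recover $Y$ as the set of singleton components at distance $r$ from all four $y_i$, you bound $|Y|< 6r$ from $s\le 4$, and you prove the prefix property of $T_{G'}$ via the $<_*$-minimality of $Y_s$ — the same ordering argument the paper makes with the chain $|X_i|<|Y_s|\le|Y_{s+1}|\le\cdots\le|Y_u|$.

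The one place your route diverges is in how $G_1,\ldots,G_8$ are produced. The paper never constructs candidates from scratch: for each of the eight $S\subseteq\{X_t,X_{t+1},X_{t+2}\}$ it simply \emph{picks an existing preimage} $G_i\in f^{-1}(G)\cap D_2(n)$ with $ML(G_i)=X_S$ when one exists (falling back to an arbitrary graph otherwise), and then shows directly that any two preimages $G',G''$ with $ML(G')=ML(G'')=X_S$ agree on $E(X_w,X_S)$ (since both equal $d^G-1$ there) and hence differ only inside $E$. This avoids entirely the verification you flag as an obstacle — namely that your explicitly built $G_t$ lies in $D_2(n)$, that $y_1,\ldots,y_4$ remain the four smallest elements of its minimal large component, and that $f(G_t)=G$. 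Those checks do go through (indeed, if a preimage with that $T$ exists, the construction succeeds), but they are extra work that the paper's "pick a representative preimage" device makes unnecessary. A small point worth noting: your fallback "arbitrary element of $f^{-1}(G)\cap D_2(n)$" is actually more faithful to the lemma statement (which asserts $f(G_1)=\cdots=f(G_8)=G$) than the paper's "any element of $D_2(n)$"; either way the fallback candidates are never invoked in the covering claim, so both versions work.
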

\begin{proof}
Suppose $G\in f(D_2(n))$ and $X_1,\ldots, X_l$ is the c.o.c.d. of $G$.  Let $t$ be such that $ML(G)=X_t$.  By definition of $f$, there is a unique index $1\leq w\leq l$ such that $G[X_w]$ consists of a copy of $H$.  There is also be a unique (possibly empty) sequence $1\leq i_1<\ldots< i_v<w$ with the following properties: 
\begin{itemize}
\item For each $1\leq j\leq v$, $X_{i_j}=\{x_{i_j}\}$ is a singleton, and
\item For each $1\leq j\leq v$, for each $y\in X_w$, $d^G(x_{i_j},y) =r$, and
\item For all $j\notin\{i_1,\ldots, i_v\}$, if $X_j=\{x_j\}$ is a singleton, then for some $y\in X_w$, $d^{G}(x,y)=m(r)$.  
\end{itemize}
Suppose $G'\in f^{-1}(G)\cap D_2(n)$.  Suppose $Y_1,\ldots, Y_u$ is the c.o.c.d. of $G'$ and $s$ is such that $ML(G')=Y_s$.  By definition of $f$ on $D_2(n)$, we must have that $X_w$ consists of the least $4$ elements of $Y_s$.  By the discussion following the definition of $f$ on $D_2(n)$, 
$$
\bigcup_{j=1}^v X_{i_j}=\bigcup_{i=1}^{s-1}Y_i,
$$
and the small components of $G'[Y_s\setminus X_w]$ are exactly the elements of $\{X_1,\ldots, X_{t-1}\}\setminus \{X_{i_1},\ldots, X_{i_v}, X_w\}$.  Notice that by definition of $D_2(n)$, $s\leq 4$, so 
\begin{align*}
\Bigg|\bigcup_{j=1}^v X_{i_j}\Bigg|=\Bigg|\bigcup_{i=1}^{s-1}Y_i\Bigg|\leq 3(2r-1) <6r.
\end{align*}
By definition of $f$, we have that 
\begin{align}\label{D_2}
\{X_t,\ldots, X_l\}=\{Y_{s+1},\ldots, Y_u\}\cup\{\text{the large components of }G'[Y_s\setminus X_w]\}.
\end{align}
If $X_i$ is a large component of $G'[Y_s\setminus X_w]$, then $|X_i|\leq |Y_s\setminus X_w|<|Y_s|\leq |Y_{s+1}|\leq |Y_u|$.  So by definition of $<_*$,
\begin{align}\label{D_2'}
X_i<_* Y_{s+1}<_*\ldots <_*Y_u.
\end{align}
By definition of $D_2(n)$ there are at most $3$ large components of $G'[Y_s\setminus X_w]$.  Combining this with (\ref{D_2}) and (\ref{D_2'}), we have that the large components of $G'[Y_s\setminus X_w]$ are contained in $\{X_t, X_{t+1}, X_{t+2}\}$ (where we let $X_i=\emptyset$ if $i>l$).
In sum, for any $G'\in f^{-1}(G)\cap D_2(n)$, we have the following.
\begin{enumerate}[(i)]
\item $X_w$ consists of the least $4$ elements of $ML(G')$,
\item $\bigcup_{j=1}^v X_{i_j}$ is the union of the small components of $G'$ and has size strictly less than $6r$,
\item The small components of $G'[ML(G')\setminus X_w]$ are the elements of $\{X_1,\ldots, X_{t-1}\}\setminus \{X_{i_1},\ldots, X_{i_v}, X_w\}$,
\item The set of large components of $G'[ML(G')\setminus X_w]$ is some subset $S$ of $\{X_t, X_{t+1}, X_{t+2}\}$.
\end{enumerate}
Set $E=E(X_w\cup \bigcup_{j=1}^v X_{i_j})$, and given $S\subseteq \{X_{t}, X_{t+1}, X_{t+2}\}$, set 
$$
X_S= \Bigg(\bigcup_{X_i\in S} X_i \Bigg)\cup  \Bigg(\bigcup_{j\in [t-1] \setminus \{i_1,\ldots, i_v\}}X_j\Bigg).
$$
Then (iii) and (iv) show that for all $G'\in f^{-1}(G)\cap D_2(n)$, there is $S\subseteq \{X_t, X_{t+1}, X_{t+2}\}$ such that $ML(G')=X_S$.  Moreover, given such a $G'$ and $S$, by definition of $f$ and (i)-(iv), 
\begin{enumerate}[$\bullet$]
\item $\Delta(G,G') \subseteq  E \cup E(X_w,X_S)$ and
\item For all $xy \in E(X_w, X_S), \text{ }d^{G'}(x,y)=d^G(x,y)-1$.
\end{enumerate}
Therefore, for all other $G''\in f^{-1}(G)\cap D_2(n)$ such that $ML(G'')=X_S$, we have that for all  $xy\in E(X_w,X_S)$, $d^{G''}(x,y)=d^G(x,y)-1=d^{G'}(x,y)$, so $\Delta(G',G'') \cap E(X_w,X_S)=\emptyset$.  This implies that
$$
\Delta(G',G'')\subseteq (\Delta(G',G)\cup \Delta(G'',G))\setminus E(X_w,X_S) \subseteq E.
$$

We now define $G_1,\ldots, G_8$.  Let $S_1,\ldots, S_{8}$ enumerate the subsets of $\{X_t,X_{t+1},X_{t+2}\}$.  For each $1\leq i\leq 8$, if there is $G'\in f^{-1}(G)\cap D_2(n)$ such that $ML(G')= X_{S_i}$, choose $G_i$ to be such a $G'$.  If no such $G'$ exists, choose $G_i$ to be any element of $D_2(n)$.  By what we've shown, for all $G'\in f^{-1}(G)\cap D_2(n)$, there is $1\leq i\leq 8$ such that $ML(G')= X_{S_i}$, and therefore $\Delta(G',G_i)\subseteq E$.  By (ii), $|\bigcup_{j=1}^v X_{i_j}|<6r$, so $|X_w \cup \bigcup_{j=1}^v X_{i_j}|< 4+6r$ and $|E|\leq {4+6r\choose 2}$.  This completes the proof.
\end{proof}

\begin{lemma}\label{D_3lemma}
Let $n\geq 4$ be an integer.  For all $G\in f(D_3(n))$, there is $G_1\in f^{-1}(G)\cap D_3(n)$ and $E\subseteq {[n]\choose 2}$ such that $|E|\leq {4+6r\choose 2}+2$, and for all $G'\in f^{-1}(G)\cap D_1(n)$, $\Delta(G_1,G')\subseteq E$.
\end{lemma}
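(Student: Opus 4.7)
The proof follows the template of Lemmas \ref{D_1lemma} and \ref{D_2lemma}: identify structural features of any preimage $G'\in f^{-1}(G)\cap D_3(n)$ that are forced by $G$, and collect the remaining ambiguity into a small edge set $E$. The extra ``$+2$'' in the bound reflects two edges whose $G'$-colors remain free even once $G$ and all partition data have been recovered.

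Given $G\in f(D_3(n))$, the unique 4-element component of $G$ inducing a copy of $H$ identifies $X_w=\{y_1,\ldots,y_4\}$. By the remark following the definition of $f$ on $D_3(n)$ (singleton components of $G$ whose edges to $X_w$ are all colored $r$ come from $Y$, while singleton components with at least one $m(r)$-colored edge to $X_w$ come from $Y_s'$), the set $Y=\bigcup_{i<s}Y_i$ is determined, and hence so are $Y_s'=[n]\setminus(Y\cup X_w)$ and the unique component $Z=Z^1\cup\cdots\cup Z^k$ of $G[Y_s']$ of size at least $8r$. Since $Z$ lies inside one component of $G'\in C_r(n)$, every edge in $\binom{Z}{2}$ has $G'$-color in $[m(r)-1,r-1]$, and the only edge that $f$ changes to color $r$ inside $Z$ is the closing edge; the unique $r$-colored edge of $G[Z]$ therefore identifies the pair $\{u^*,v^*\}=\{z^1_{i_1},z^k_{i_k}\}$.

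The decisive step is to recover the partition $Z=Z^1\sqcup\cdots\sqcup Z^k$ and the chain vertices $z^j_{i_j}$ from $G$ alone. The $m(r)-1$-subgraph of $G[Z]$ is the disjoint union of the $m(r)-1$-subgraphs of the $G'[Z^j]$'s together with the $k-1$ chain edges, and the latter form a path of bridges connecting $u^*$ to $v^*$ and traversing $Z^1,\ldots,Z^k$ in order. Using the bridge/block structure of this graph, the canonical $<_*$-ordering among large components, and the recursive definition of $z^j_{i_j}$, the partition and chain vertices are pinned down by $G$. Once they are, property (ii) of the chain construction, $d^{G'}(z^{j-1}_{i_{j-1}},z^j_{i_j})=|i_j-i_{j+1}|$ for $2\leq j\leq k-1$, fixes the $G'$-colors of chain edges $1,\ldots,k-2$; only the last chain edge $z^{k-1}_{i_{k-1}}z^k_{i_k}$ and the closing edge $z^1_{i_1}z^k_{i_k}$ have $G'$-colors that remain free in $[m(r),r-1]$.

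All remaining relevant edges are deterministically recovered: edges in $E(X_w,Y_s')$ satisfy $d^{G'}=d^G-1$, and every edge in $\binom{Y_s'}{2}\cup E(Y,Y_s')$ that is not a chain or closing edge is unchanged by $f$. Therefore, choosing any $G_1\in f^{-1}(G)\cap D_3(n)$ and setting
$$E=\binom{X_w\cup Y}{2}\cup\bigl\{z^{k-1}_{i_{k-1}}z^k_{i_k},\;z^1_{i_1}z^k_{i_k}\bigr\},$$
we have $|E|\leq\binom{4+6r}{2}+2$ (since $|Y|\leq 3(2r-1)<6r$), and $\Delta(G_1,G')\subseteq E$ for every $G'\in f^{-1}(G)\cap D_3(n)$. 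The main obstacle is the combinatorial verification that the partition of $Z$ is uniquely determined by $G$, which requires using the bridge structure of the $m(r)-1$-subgraph of $G[Z]$ together with the ordering conventions to rule out alternative decompositions arising from different preimages.
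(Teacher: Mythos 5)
Your proposal follows the same overall strategy as the paper: identify $X_w$ and $Y$ from $G$, locate the bad-cycle component $Z$, recover the chain data, and bound the remaining ambiguity by $\binom{X_w\cup Y}{2}$ together with the last chain edge and the closing edge. The set $E$ you write down and the bound $\binom{4+6r}{2}+2$ agree with the paper's. But there is a genuine gap exactly where you flag "the main obstacle": you never actually show that the partition $Z=Z^1\sqcup\cdots\sqcup Z^k$ and the chain vertices $z^j_{i_j}$ are determined by $G$, and the bridge/block sketch is left unexecuted. This is the crux of the lemma — without it you cannot recover $d^{G'}$ on chain edges $1,\ldots,k-2$ (via the rule $d^{G'}(z^{j-1}_{i_{j-1}},z^j_{i_j})=|i_{j+1}-i_j|$), nor identify which two edges remain free. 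The paper handles this step more directly: it introduces an auxiliary simple complete $r$-graph $B$ on $X_b$ obtained from $G[X_b]$ by resetting the colors of all bad-cycle edges (both the $k-1$ chain edges and the closing edge) to $r-1$. This undoes precisely what $f$ did inside $X_b$: the $(m(r)-1)$-subgraph of $B$ is the disjoint union of the $(m(r)-1)$-subgraphs of the $G'[Z^j]$'s, so the components of $B$ are exactly $Z^1,\ldots,Z^k$, and the $<_*$-order then fixes the labeling up to reversal (which is harmless since $(z^1,\ldots,z^k)$ is a bad cycle iff $(z^k,\ldots,z^1)$ is). Your bridge idea points at the same underlying fact — the chain edges are bridges between the $Z^j$'s, so the bad cycle is the unique simple $(m(r)-1)$-path between the endpoints of the unique $r$-colored edge — but you should actually carry this out (or adopt the paper's reset-and-take-components argument) in place of the paragraph that names it as an obstacle.

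Two smaller points. First, the identification $Y_s'=[n]\setminus(Y\cup X_w)$ is not correct: that set also contains $Y_{s+1}\cup\cdots\cup Y_u$, each of which can be at least as large as $Z$, so $Z$ is not "the unique component of $G[Y_s']$ of size at least $8r$." The right identification is that $Z=X_b$ is the unique component of $G$ containing a bad cycle (the small components of $G'[Y_s']$ and the components $Y_{s+1},\ldots,Y_u$ are unchanged by $f$ and, being inside single components of $G'\in C_r(n)$, have no $r$-colored edge). This slip does not propagate, since any two preimages agree on $E(X_w,Y_{s+1}\cup\cdots\cup Y_u)$ regardless, but it should be fixed. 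Second, the lemma as printed says "$G'\in f^{-1}(G)\cap D_1(n)$," which is a typo for $D_3(n)$; you read through it correctly.
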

\begin{proof}
Suppose $G\in f(D_3(n))$ and $X_1,\ldots, X_l$ is the c.o.c.d. of $G$.  By definition of $f$, there are exactly two indices $1\leq w<b\leq l$ such that $G[X_w]$ consists of a copy of $H$, and such that there is a sequence $(z^1,\ldots, z^k)$ which is a bad cycle in $G[X_b]$ of some length $k\geq 4$.  Let $B$ be the simple complete $r$-graph with vertex set $X_b$ such that for all $1\leq i\leq k-1$, $d^B(z^i, z^{i+1})=d^B(z^1,z^k)=r-1$, and for all other $xy\in E(X_b)$, $d^B(x,y)=d^G(x,y)$.  Then by definition of $f$, $B$ must have $k$ components, $Z^1,\ldots, Z^k$ such that for each $1\leq i\leq k$, $Z^i$ is a large component of $B$ containing $z^i$.  Moreover, we must have that either $Z^1<_*\ldots<_* Z^k$ or $Z^k<_*\ldots<_* Z^1$.  Because  $(z^1,\ldots, z^k)$ is a bad cycle if and only if $(z^k,\ldots, z^1)$ is a bad cycle, we can relabel $(z^1,\ldots, z^k)$ if necessary so that $Z^1<_* \ldots<_*Z^k$.  There is also be a unique (possibly empty) sequence $1\leq i_1<\ldots< i_v<w$ with the following properties: 
\begin{itemize}
\item For each $1\leq j\leq v$, $X_{i_j}=\{x_{i_j}\}$ is a singleton, and
\item For each $1\leq j\leq v$, for each $y\in X_w$, $d^G(x_{i_j},y) =r$, and
\item For all $j\notin\{i_1,\ldots, i_v\}$, if $X_j=\{x_j\}$ is a singleton, then for some $y\in X_w$, $d^{G}(x,y)=m(r)$.  
\end{itemize}
Suppose $G'\in f^{-1}(G)\cap D_3(n)$ and $Y_1,\ldots, Y_u$ is the c.o.c.d. of $G'$.  Let $s$ be such that $ML(G')=Y_s$.  The same arguments as in the case when $G\in D_2(n)$ imply that $X_w$ consists of the least $4$ elements of $Y_s$, 
$$
\bigcup_{j=1}^v X_{i_j}=\bigcup_{i=1}^{s-1}Y_i,
$$
 the small components of $G'[Y_s\setminus X_w]$ are exactly the elements of $\{X_1,\ldots, X_{t-1}\}\setminus \{X_{i_1},\ldots, X_{i_v}, X_w\}$, and $|\bigcup_{j=1}^v X_{i_j}| <6r$.  Further, by definition of $f$ we must have that $Z^1,\ldots, Z^k$ are the large components of $G'[Y_s\setminus X_w]$.  In sum, for any $G'\in f^{-1}(G)\cap D_2(n)$, we have the following.
\begin{enumerate}[(i)]
\item $X_w$ consists of the least $4$ elements of $ML(G')$,
\item $\bigcup_{j=1}^v X_{i_j}$ is the union of the small components of $G'$ and has size strictly less than $6r$,
\item $\{X_1,\ldots, X_{t-1}\}\setminus \{X_{i_1},\ldots, X_{i_v},X_w\}\}$ is the set of small components of $G'[ML(G')\setminus X_w]$,
\item $Z^1,\ldots, Z^k$ are the large components of $G'[ML(G')\setminus X_w]$, and $Z^1<_*\ldots <_* Z^k$.
\end{enumerate}
Set $X= \bigcup_{j\in [t-1]\setminus \{i_1,\ldots, i_v\}}X_j$ and $Z=\bigcup_{j=1}^k Z^j$, and note (iii) and (iv) imply that $ML(G')=X\cup Z$.  Define 
\begin{align*}
E_1=E(X_w\cup \bigcup_{j=1}^v X_{i_j})\text{ and } E_2=E(X_w, X\cup Z).
\end{align*}
Then for all $G'\in f^{-1}(G)\cap D_3(n)$, the definition of $f$ and (i)-(iv) imply that $\Delta(G,G')\subseteq E_1\cup E_2 \cup \{z^1z^2, z^2z^3,\ldots, z^1z^k\}$ and for all $xy\in E_2$, $d^{G'}(x,y)=d^G(x,y)-1$.  We now show that we can also recover the value of $d^{G'}(z^{j-1}, z^{j})$ for each $2\leq j\leq k-1$.  For each $1\leq j\leq k$, let $z^j_{j_1},\ldots, z^j_{j_{|Z^j|}}$ enumerate the elements of $Z^j$ in increasing order. Let $s_1, \ldots, s_k$ be the indices such that $(z^1_{s_1},\ldots, z^k_{s_k})=(z^1,\ldots, z^k)$.  By definition of $f$, for each $2\leq i\leq k-1$, $d^{G'}(z^{i-1}, z^{i})= |s_{i+1}-s_i|$.  We have now shown that for all $G', G''\in f^{-1}(G)\cap D_3(n)$, 
\begin{enumerate}[$\bullet$]
\item $\Delta(G,G')\cup \Delta(G,G'') \subseteq E_1\cup E_2 \cup \{z^1z^2, z^2z^3,\ldots, z^1z^k\}$,
\item For all $xy\in E_2$, $d^{G'}(x,y)=d^G(x,y)-1=d^{G''}(x,y)$, and
\item For all $z^iz^{i+1}\in \{z^1z^2,\ldots, z^{k-2}z^{k-1}\}$, $d^{G'}(x,y)= |s_{i+1}-s_i|=d^{G''}(x,y)$.
\end{enumerate}
Therefore,
$$
\Delta(G',G'')\subseteq (\Delta(G,G')\cup \Delta(G'',G))\setminus (E_2\cup \{z^1z^2,\ldots, z^{k-2}z^{k-1}\})\subseteq E_1\cup \{z^{k-1}z^k, z^1z^k\}.
$$
Set $E=E_1 \cup \{z^{k-1}z^k, z^1z^k\}$ and take $G_1$ to be any element of $f^{-1}(G)\cap D_3(n)$.  By (ii), $|\bigcup_{j=1}^v X_{i_j}|<6r$, so $|X_w \cup \bigcup_{j=1}^v X_{i_j}|< 4+6r$ and $|E|\leq {4+6r\choose 2}+2$.  This completes the proof.
\end{proof}

We now prove that for all $n\geq 4$, (\ref{1.8(2)}) holds.  Fix an integer $n\geq 4$ and $G\in f(C_r(n))$. Define $E_1,\ldots, E_{10}\subseteq {[n]\choose 2}$ and $G_1,\ldots, G_{10} \in C_r(n)$ as follows.  If $G\notin f(D_1(n))$, set $E_1=\emptyset$ and $G_1=G$.  Otherwise, let $G_1=G$ and let  $E_1\subseteq {[n]\choose 2}$ be as in Lemma \ref{D_1lemma}.  If $G\notin f(D_2(n))$, let $E_2=\ldots=E_9=\emptyset$ and $G_2=\ldots= G_8 = G$.  Otherwise let $E\subseteq {[n]\choose 2}$ and $G_2,\ldots, G_9 \in D_2(n)$ be as in Lemma \ref{D_2lemma}, and set $E_2=\ldots=E_9=E$.  If $G\notin f(D_3(n))$, let $E_{10}=\emptyset$ and $G_{10}=G$.  Otherwise, let $E_{10}\subseteq{[n]\choose 2}$ and $G_{10}\in D_3(n)$ be as in Lemma \ref{D_3lemma}.  Then Lemmas \ref{D_1lemma},  \ref{D_2lemma}, and \ref{D_3lemma} imply that
\begin{align*}
f^{-1}(G)\cap D_1(n) &\subseteq \{G'\in C_r(n): \Delta(G_1,G')\subseteq E_1\}, \\
f^{-1}(G)\cap D_2(n) &\subseteq \bigcup_{i=2}^9 \{G'\in C_r(n): \Delta(G_i,G')\subseteq E_i\}, \text{ and}\\
f^{-1}(G)\cap D_3(n) &\subseteq \{G'\in C_r(n): \Delta(G_{10},G')\subseteq E_{10}\}.
\end{align*}
Since $C_r(n)=D_1(n)\cup D_2(n) \cup D_3(n)$, we have that
\begin{eqnarray}\label{counter}
f^{-1}(G)\subseteq \bigcup_{i=1}^{10} \{G'\in C_r(n): \Delta(G_i,G')\subseteq E_i\}.  
\end{eqnarray}
For each $1\leq i\leq 10$, every element of $\{G'\in C_r(n): \Delta(G_i,G')\subseteq E_i\}$ can be constructed by starting with $G_i$, then changing the edges contained in $E_i$.  There are at most $r^{|E_i|}$ ways to do this, and for each $i$, $|E_i|\leq {4+8r\choose 2}\leq 64r^2$.  Therefore, for each $i$, $|\{G'\in C_r(n): \Delta(G_i,G')\subseteq E_i\}|\leq r^{64r^{2}}$.  Combining this with (\ref{counter}), we have that    
$$
|f^{-1}(G)|\leq 10r^{64r^2}\leq r^{65r^2}.
$$
Since $f(C_r(n))\subseteq M_r(n)\setminus C_r(n)$, this implies $|M_r(n)\setminus C_r(n)|\geq |f(C_r(n))|\geq\frac{|C_r(n)|}{r^{65r^2}}$.  Rearranging this yields that
$$
|C_r(n)|\leq \frac{r^{65r^2}}{r^{65r^2}+1}|M_r(n)|=\Bigg(1-\frac{1}{r^{65r^2}+1}\Bigg)|M_r(n)| < (1-r^{-66r^2})|M_r(n)|,
$$
as desired.

\bibliography{/Users/carolineamelia/Desktop/Bibtex/science1.bib}
\bibliographystyle{amsplain}
\end{document}